\numberwithin{equation}{section}
\renewcommand\footnotemark{}
\newtheorem{defn}{Definition}
\newtheorem{lemma}{Lemma}
\newtheorem{thm}{Theorem}
\newtheorem{cor}{Corollary}
\newtheorem{rmk}{Remark}
\newcommand{\op}[1]{\operatorname{\text{\rm #1}}}
\begin{document}

\title{Existence and regularity of multivalued solutions to elliptic equations and systems}
\author{Brian Krummel}

\maketitle

\begin{abstract} 
We extend the work of~\cite{SW1}, in which a large class of $C^{1,\mu}$ multivalued solutions to the minimal surface equation were constructed, to produce $C^{1,\mu}$ multivalued solutions to more general classes of elliptic equations and systems, including the minimal surface system with small boundary data and the Laplace equation.  We use methods for differential equations, which are more general than the specific minimal submanifold approach adopted in~\cite{SW1}.  We also prove the branch set of the graphs of the solutions constructed~\cite{SW1} are real analytic submanifolds by inductively using Schauder estimates.
\end{abstract}


\section{Introduction} 

In~\cite{SW1}, Simon and Wickramasekera constructed a rich class of $C^{1,\mu}$ $q$-valued solutions to the Dirichlet problem for the minimal surface equation on the cylinder $\mathcal{C} = B^2_1(0) \times \mathbb{R}^{n-2}$.  However, the method of Simon and Wickramasekera was specific to the minimal surface equation and does not readily generalize to other elliptic equations or to elliptic systems.  We extend the results of~\cite{SW1} by establishing in Theorem \ref{theorem1} the existence of $C^{1,\mu}$ $q$-valued solutions with small boundary data to the Dirichlet problem for a large class of elliptic systems and in Theorem \ref{theorem2} and Corollary \ref{corollary2} the existence of $C^{1,\mu}$ $q$-valued solutions (possibly without small data) to the Dirichlet problem for large classes of elliptic equations.  In particular, we extend the results of~\cite{SW1} by giving examples of $q$-valued harmonic functions and branched minimal submanifolds with codimension greater than one.  The boundary data of these solutions satisfy a $k$-fold symmetry condition as in~\cite{SW1}.  Our approach uses techniques for differential equations, which have the advantage applying in a more general context than codimension one minimal surfaces. 
\let\thefootnote\relax\footnotetext{2010 \textit{Mathematics Subject Classification.} 35J47}

We also study the regularity of the branch set of minimal immersions.  The singular set of minimal submanifolds is known to have Hausdorff dimension at most $n-2$ in the case of area minimizing $n$-dimensional integral currents due Almgren~\cite{Almgren} and stationary graphs of $C^{1,\mu}$ two-valued functions due to Simon and Wickramasekera~\cite{SW2}.  The branch set of the minimal surfaces constructed in~\cite{SW1} and this paper are obviously $C^{1,\mu}$ $(n-2)$-dimensional submanifolds.  We extend these results by showing that the branch sets of minimal immersions constructed in~\cite{SW1} are locally real analytic $(n-2)$-dimensional submanifolds.  

The methods of differential equations require adding and multiplying functions.  However, it is not generally possible to add or multiply $q$-valued functions to obtain a $q$-valued sum or product.  To handle this difficulty we consider $q$-valued functions $\tilde u$ on an open set $\Omega$ in $\mathbb{R}^n$ each associated with a map $u = (u_1,u_2,\ldots,u_q) : \Omega \setminus [0,\infty) \times \{0\} \times \mathbb{R}^{n-2} \rightarrow (\mathbb{R}^m)^q$ such that $\tilde u(X) = \{u_1(X),u_2(X),\ldots,u_q(X)\}$ as an unordered $q$-tuple for each $X \in \Omega \setminus [0,\infty) \times \{0\} \times \mathbb{R}^{n-2}$, as we can then add and multiply the corresponding maps $u$. 

To construct $q$-valued solutions to elliptic equations and systems, we first prove Theorem \ref{poisson_thm}, which establishes the existence of $q$-valued solutions to the Dirichlet problem for a class of Poisson equations.  Using a change of variable $\xi_1+i\xi_2 = (x_1+ix_2)^{1/q}$, we transform the Poisson equation of $q$-valued functions into a singular differential equation of single-valued functions, which we can solve using Fourier analysis and standard elliptic theory.  Using the average-free and $k$-fold symmetry properties of the solution, we obtain a bound on how the solution decays at points on the axis $\{0\} \times \mathbb{R}^{n-2}$ of $\mathcal{C}$, which implies the H\"{o}lder continuity of the gradient of the solution.  The existence result for elliptic systems, Theorem \ref{theorem1}, then follows from the contraction mapping principle and the existence results for elliptic equations, Theorem \ref{theorem2} and Corollary \ref{corollary2}, follow from the Leray-Schauder theory.

The branch set of the graphs of the $q$-valued solutions $\tilde u$ constructed in~\cite{SW1} is the graph of $\tilde u$ over $\{0\} \times \mathbb{R}^{n-2}$.  Thus the real analyticity of the branch set follows in Theorem \ref{theorem3}, which establishes that a $q$-valued solution $\tilde u(x,y)$ (where $x \in \mathbb{R}^2$ and $y \in \mathbb{R}^{n-2}$) to a elliptic equation with real analytic data is real analytic with respect to $y$ in the sense that $\tilde u$ locally satisfies bounds of the form $|D^{\gamma}_y \tilde u(x,y)| \leq |\gamma|! C^{|\gamma|}$ for some constant $C \in (0,\infty)$.  Note that an analogous regularity result, Theorem \ref{theorem3_systems}, also holds for elliptic systems.  Rather than proving Theorem \ref{theorem3} by extending an approach of Morrey in~\cite{Morrey} using integral kernels, we inductively apply the Schauder estimates.  This argument readily yields $C^{1,\mu}$ estimates on derivatives $D^{\gamma}_y \tilde u$ for every multi-index $\gamma$, where $\mu \in (0,1/q)$.  More care is needed to obtain the particular type of bound on $D^{\gamma}_y \tilde u(x,y)$ required for real analyticity with respect to $y$.  We obtain such bounds using a modified version of a technique due to Friedman~\cite{Friedman} involving majorants.

\section{Preliminaries and statement of main results} \label{sec:preliminaries}

We adopt the following notation and conventions. 
\begin{enumerate}
\item[] $n \geq 3$, $m \geq 1$, and $q \geq 2$ are fixed integers.
\item[] $B_R^l(X_0)$ denotes the open ball of radius $R$ centered at $X_0$ in $\mathbb{R}^l$ and $B_R(X_0) = B_R^n(X_0)$.
\item[] $\mathcal{C} = B_1^2(0) \times \mathbb{R}^{n-2}$ denotes an open cylinder in $\mathbb{R}^n$.
\item[] $X = (x,y)$ denotes a point in $\mathbb{R}^n$, where $x \in \mathbb{R}^2$ and $y \in \mathbb{R}^{n-2}$.  We identify $x$ with the point $re^{i\theta}$ in $\mathbb{C}$, where $r \in [0,\infty)$ and $\theta \in \mathbb{R}$.
\end{enumerate}

Let $\mathcal{A}_q(\mathbb{R}^m)$ denote the space of unordered $q$-tuples $\tilde u = \{u_1,u_2,\ldots,u_q\}$, where $u_1,u_2,\ldots,u_q \in \mathbb{R}^m$ and we allow $u_i = u_j$ for $i \neq j$.  We define a metric $\mathcal{G}$ on $\mathcal{A}_q(\mathbb{R}^m)$ by 
\begin{equation*}
	\mathcal{G}(\tilde u, \tilde v) = \min_{\sigma} \left( \sum_{l=1}^q |u_l - v_{\sigma(l)}|^2 \right)^{1/2}
\end{equation*}
for all unordered $q$-tuple $\tilde u = \{u_1,\ldots,u_q\}$ and $\tilde v = \{v_1,\ldots,v_q\}$, where the minimum is taken over all permutations $\sigma$ of $\{1,\ldots,q\}$.  A $q$-valued function $\tilde u$ on a set $\Omega \subseteq \mathbb{R}^n$ is a map $\tilde u : \Omega \rightarrow \mathcal{A}_q(\mathbb{R}^m)$ (note that this definition of $q$-valued functions is equivalent to the definition of Almgren~\cite{Almgren}).  A $q$-valued function $\tilde u : \Omega \rightarrow \mathcal{A}_q(\mathbb{R}^m)$ is continuous at $X_0 \in \Omega$ if either $X_0$ is an isolated point of $\Omega$ or 
\begin{equation*}
	\lim_{X \rightarrow X_0} \mathcal{G}(\tilde u(X),\tilde u(X_0)) = 0, 
\end{equation*} 
where the limit is taken over $X \in \Omega$.  $C^0(\Omega;\mathcal{A}_q(\mathbb{R}^m))$ denotes the space of continuous $q$-valued functions $\tilde u : \Omega \rightarrow \mathcal{A}_q(\mathbb{R}^m)$.  A $q$-valued function $\tilde u : \Omega \rightarrow \mathcal{A}_q(\mathbb{R}^m)$ is H\"{o}lder continuous with exponent $\mu \in (0,1]$ if 
\begin{equation*}
	[\tilde u]_{\mu,\Omega} \equiv \sup_{X,Y \in \Omega, \, X \neq Y} \frac{\mathcal{G}(\tilde u(X), \tilde u(Y))}{|X-Y|^{\mu}} < \infty.
\end{equation*} 
A $q$-valued function $\tilde u : \Omega \rightarrow \mathcal{A}_q(\mathbb{R}^m)$ is differentiable at a point $X$ in the interior of $\Omega$ if for some $m \times n$ matrices $A_1,\ldots,A_q$, 
\begin{equation} \label{defn_derivative}
	\lim_{h \rightarrow 0} \frac{\mathcal{G}(\tilde u(X+h), \{u_l(X) + A_l h\})}{|h|} = 0, 
\end{equation} 
in which case we say $D\tilde u(X) \equiv \{A_1,\ldots,A_q\}$ is the derivative of $\tilde u$ at $X$.  For each open set $\Omega \subseteq \mathbb{R}^n$, $C^1(\Omega;\mathcal{A}_q(\mathbb{R}^m))$ denotes the space of $q$-valued functions $\tilde u : \Omega \rightarrow \mathcal{A}_q(\mathbb{R}^m)$ such that $D\tilde u$ exists at each point in $\Omega$ and $\tilde u$ and $Du$ are continuous on $\Omega$.  For each $\mu \in (0,1]$ and open set $\Omega \subseteq \mathbb{R}^n$, $C^{1,\mu}(\Omega;\mathcal{A}_q(\mathbb{R}^m))$ denotes the space of $q$-valued functions $\tilde u \in C^1(\Omega;\mathcal{A}_q(\mathbb{R}^m))$ such that $[D\tilde u]_{\mu;\Omega'} < \infty$ for every open set $\Omega' \subset \subset \Omega$. 

Let $\Omega$ be an open set in $\mathbb{R}^n$ and $\tilde u \in C^1(\Omega;\mathcal{A}_q(\mathbb{R}^m))$.  We let $\mathcal{B}_{\tilde u}$ denote the set of points $X_0 \in \Omega$ such that there is no ball $B_R(X_0) \subseteq \Omega$ on which $\tilde u = \{u_1,u_2,\ldots,u_q\}$ for some single-valued functions $u_1,u_2,\ldots,u_q \in C^1(B_R(X_0);\mathbb{R}^m)$.  We say $\tilde u$ satisfies 
\begin{equation*}
	D_i (A^i(X,\tilde u,D\tilde u)) + B(X,\tilde u,D\tilde u) = 0 \text{ weakly in } \Omega \setminus \mathcal{B}_{\tilde u} 
\end{equation*}
for continuous single-valued functions $A^i, B : \Omega \times \mathbb{R}^m \times \mathbb{R}^{mn} \rightarrow \mathbb{R}$ if for every ball $B_R(X_0) \subseteq \Omega \setminus \mathcal{B}_{\tilde u}$, $\tilde u = \{u_1,u_2,\ldots,u_q\}$ on $B_R(X_0)$ for single-valued functions $u_l \in C^1(B_R(X_0);\mathbb{R}^m)$ such that 
\begin{equation*}
	D_i (A^i(X,u_l,Du_l)) + B(X,u_l,Du_l) = 0 \text{ weakly in } B_R(X_0)
\end{equation*}
for $l = 1,2,\ldots,q$. 

Observe that we cannot in general add or multiply multivalued functions.  Given two $q$-valued functions $\tilde u, \tilde v : \Omega \rightarrow \mathcal{A}_q(\mathbb{R}^m)$, there is no canonical way to pair the elements $u_i(X)$ and $v_i(X)$ of the unordered $q$-tuples $\tilde u(X) = \{u_1(X),u_2(X),\ldots,u_q(X)\}$ and $\tilde v(X) = \{v_1(X),v_2(X),\ldots,v_q(X)\}$ to obtain a sum $(\tilde u + \tilde v)(X) = \{u_1(X) + v_1(X), u_2(X) + v_2(X), \ldots, u_q(X) + v_q(X)\}$ or product $(\tilde u \tilde v)(X) = \{u_1(X) v_1(X), u_2(X) v_2(X), \ldots, u_q(X) v_q(X)\}$.  Moreover, for some $q$-valued functions $\tilde u, \tilde v \in C^1(\Omega;\mathcal{A}_q(\mathbb{R}^m))$, there is no way to pair the elements $u_i(X)$ and $v_i(X)$ to obtain a sum $(\tilde u + \tilde v)(X) = \{u_1(X) + v_1(X), u_2(X) + v_2(X), \ldots, u_q(X) + v_q(X)\}$ that is $C^1$ on $\Omega$; for example, consider $\tilde u, \tilde v \in C^1(\mathbb{R}^2;\mathcal{A}_2(\mathbb{R}))$ given by $\tilde u(x_1,x_2) = \{ \pm \op{Re} (x_1+ix_2-1)^{3/2} \}$ and $\tilde v(x_1,x_2) = \{ \pm \op{Re} (x_1+ix_2+1)^{3/2} \}$.  In what follows, we develop a theory of multivalued solutions to linear and quasilinear elliptic differential equations, which requires adding and multiplying functions.  Rather than working with multivalued functions directly, we will work with functions $u : \Omega \setminus [0,\infty) \times \{0\} \times \mathbb{R}^{n-2} \rightarrow (\mathbb{R}^m)^q$, which we can add and multiply.  The class of functions $u$ that we consider take the form $u(X) = (u_1(X),u_2(X),\ldots,u_q(X))$ at each $X \in \Omega \setminus [0,\infty) \times \{0\} \times \mathbb{R}^{n-2}$, where $u_l : \Omega \setminus [0,\infty) \times \{0\} \times \mathbb{R}^{n-2} \rightarrow \mathbb{R}^m$ for $l = 1,2,\ldots,q$, and, roughly speaking, satisfy $\lim_{x_2 \uparrow 0} u_l(x_1,x_2,y) = \lim_{x_2 \downarrow 0} u_{l+1}(x_1,x_2,y)$ for $l = 1,2,\ldots,q-1$ and $\lim_{x_2 \uparrow 0} u_q(x_1,x_2,y) = \lim_{x_2 \downarrow 0} u_1(x_1,x_2,y)$ whenever $(0,x_2,y) \in \Omega$.  To each such map $u$ we will associate a $q$-valued function $\tilde u : \Omega \rightarrow \mathcal{A}_q(\mathbb{R}^m)$ such that $\tilde u(X) = \{u_1(X),u_2(X),\ldots,u_q(X)\}$ for $X \in \Omega \setminus [0,\infty) \times \{0\} \times \mathbb{R}^{n-2}$. 

\begin{defn} \label{defnCq}
Let $\Omega$ be an open set in $\mathbb{R}^n$ and $k \geq 0$ be an integer.  $C^{k;q}(\Omega;\mathbb{R}^m)$ denotes the set of maps $u = (u_1,u_2,\ldots,u_q) : \Omega \setminus [0,\infty) \times \{0\} \times \mathbb{R}^{n-2} \rightarrow (\mathbb{R}^m)^q$ such that $u_l |_{\Omega \cap \mathbb{R} \times (0,\infty) \times \mathbb{R}^{n-2}}$ extend to $C^k$ functions on $\Omega \cap \mathbb{R} \times [0,\infty) \times \mathbb{R}^{n-2}$ and $u_l |_{\Omega \cap \mathbb{R} \times (-\infty,0) \times \mathbb{R}^{n-2}}$ extend to $C^k$ functions on $\Omega \cap \mathbb{R} \times (-\infty,0] \times \mathbb{R}^{n-2}$ for $l = 1,2,\ldots,q$ and 
\begin{align*}
	\lim_{x_2 \uparrow 0} D^{\alpha} u_l(x_1,x_2,y) &= \lim_{x_2 \downarrow 0} D^{\alpha} u_{l+1}(x_1,x_2,y) \text{ for } l = 1,2,\ldots,q-1, \\
	\lim_{x_2 \uparrow 0} D^{\alpha} u_q(x_1,x_2,y) &= \lim_{x_2 \downarrow 0} D^{\alpha} u_1(x_1,x_2,y), 
\end{align*}
for all $x_1 \geq 0$, $y \in \mathbb{R}^{n-2}$, and $|\alpha| \leq k$.  Given $u \in C^{k;q}(\Omega;\mathbb{R}^m)$, we let 
\begin{equation*}
	D^{\alpha} u(0,y) \equiv \lim_{x \rightarrow 0} D^{\alpha} u_1(x,y)
\end{equation*}
whenever $(0,y) \in \Omega$ and $|\alpha| \leq k$.  We let $C^{\infty;q}(\Omega;\mathbb{R}^m) = \bigcap_{k=0}^{\infty} C^{k;q}(\Omega;\mathbb{R}^m)$.  $C^{k;q}_c(\Omega;\mathbb{R}^m)$ denotes the set of $u \in C^{k;q}(\Omega;\mathbb{R}^m)$ such that for some $\Omega' \subset \subset \Omega$, $u = 0$ on $(\Omega \setminus \Omega') \setminus [0,\infty) \times \{0\} \times \mathbb{R}^{n-2}$.
\end{defn}

\begin{defn}
Let $\Omega$ be an open set in $\mathbb{R}^n$, $k \geq 0$ be an integer, and $\mu \in (0,1]$.  $C^{k,\mu;q}(\Omega;\mathbb{R}^m)$ denotes the set of maps $u \in C^{k;q}(\Omega;\mathbb{R}^m)$ such that $u_l |_{\Omega \cap \mathbb{R} \times (0,\infty) \times \mathbb{R}^{n-2}}$ extend to $C^{k,\mu}$ functions on $\Omega \cap \mathbb{R} \times [0,\infty) \times \mathbb{R}^{n-2}$ and $u_l |_{\Omega \cap \mathbb{R} \times (-\infty,0) \times \mathbb{R}^{n-2}}$ extend to $C^{k,\mu}$ functions on $\Omega \cap \mathbb{R} \times (-\infty,0] \times \mathbb{R}^{n-2}$ for $l = 1,2,\ldots,q$.
\end{defn}

To each $u \in C^{k;q}(\Omega;\mathbb{R}^m)$, where $k \in \{0,1\}$, we associate a unique $q$-valued function $\tilde u \in C^k(\Omega;\mathcal{A}_q(\mathbb{R}^m))$ given by $\tilde u(X) = \{u_1(X),u_2(X),\ldots,u_q(X)\}$ for $X \in \Omega \setminus [0,\infty) \times \{0\} \times \mathbb{R}^{n-2}$.  Of course, more than one $u \in C^{k;q}(\Omega;\mathbb{R}^m)$ may be associated with the same $q$-valued function $\tilde u$. 

Let $\Omega$ be an open set in $\mathbb{R}^n$.  Given a set $S \subseteq \Omega$, we define 
\begin{align} \label{supinf}
	\inf_S u &\equiv \inf_{X \in S \setminus [0,\infty) \times \{0\} \times \mathbb{R}^{n-2}} \min \{ u_1(X), \ldots, u_q(X) \} , \nonumber \\
	\sup_S u &\equiv \sup_{X \in S \setminus [0,\infty) \times \{0\} \times \mathbb{R}^{n-2}} \max \{ u_1(X), \ldots, u_q(X) \} , 
\end{align} 
for each $u \in C^{0;q}(\Omega;\mathbb{R})$ and we define 
\begin{equation} \label{supinf2}
	\sup_S |u| \equiv \sup_{X \in S \setminus [0,\infty) \times \{0\} \times \mathbb{R}^{n-2}} \max \{ |u_1(X)|, \ldots, |u_q(X)| \} . 
\end{equation}
for each $u \in C^{0;q}(\Omega;\mathbb{R}^m)$.  Note that if instead $u : \Omega \setminus [0,\infty) \times \{0\} \times \mathbb{R}^{n-2} \rightarrow (\mathbb{R}^m)^q$ is measurable, we can define $\inf_{\Omega} u$ and $\sup_{\Omega} u$ if $m = 1$ and $\sup_S |u|$ by (\ref{supinf}) and (\ref{supinf2}) by replacing the infimums and supremums with essential infimums and supremums.  We say $u \in C^{0;q}(\Omega;\mathbb{R})$ attains its maximum value at $X_0 \in \Omega$ if either $X_0 \in \Omega \setminus [0,\infty) \times \{0\} \times \mathbb{R}^{n-2}$ and 
\begin{equation*}
	\sup_{\Omega} u = \max \{ u_1(X_0), u_2(X_0), \ldots, u_q(X_0) \} 
\end{equation*} 
or $X_0 \in \Omega \cap [0,\infty) \times \{0\} \times \mathbb{R}^{n-2}$ and 
\begin{equation*}
	\sup_{\Omega} u = \lim_{X \rightarrow X_0} \max \{ u_1(X), u_2(X), \ldots, u_q(X) \}.
\end{equation*} 
For each integer $k \geq 0$,
\begin{equation*}
	\|u\|_{C^{k;q}(\Omega)} \equiv \sum_{|\alpha| \leq k} \sup_{\Omega} |D^{\alpha} u| 
\end{equation*}
for every $u \in C^{k;q}(\Omega;\mathbb{R}^m)$.  For each integer $k \geq 0$ and $\mu \in (0,1]$, 
\begin{equation*}
	[u]_{\mu;q,\Omega} \equiv \sum_{l=1}^q ([u_l]_{\mu;\Omega \cap \mathbb{R} \times (0,\infty) \times \mathbb{R}^{n-2}} 
		+ [u_l]_{\mu;\Omega \cap \mathbb{R} \times (-\infty,0) \times \mathbb{R}^{n-2}}) 
\end{equation*}
for every $u \in C^{0,\mu;q}(\Omega;\mathbb{R}^m)$ and 
\begin{equation*}
	\|u\|_{C^{k,\mu;q}(\Omega)} \equiv \sum_{|\alpha| \leq k} \sup_{\Omega} |D^{\alpha} u| 
		+ \sum_{|\alpha| = k} \sup_{\Omega} [D^{\alpha} u]_{\mu;q,\Omega} 
\end{equation*}
for every $u \in C^{k,\mu;q}(\Omega;\mathbb{R}^m)$.  When $\Omega = B_R(X_0)$ is an open ball, we define  
\begin{equation*}
	\|u\|'_{C^{k;q}(B_R(X_0))} \equiv \sum_{|\alpha| \leq k} R^{|\alpha|} \sup_{B_R(X_0)} |D^{\alpha} u| 
\end{equation*}
for every $u \in C^{k;q}(B_R(X_0);\mathbb{R}^m)$ and 
\begin{equation*}
	\|u\|'_{C^{k,\mu;q}(B_R(X_0))} \equiv \sum_{|\alpha| \leq k} R^{|\alpha|} \sup_{B_R(X_0)} |D^{\alpha} u| 
		+ \sum_{|\alpha| = k} \sup_{\Omega} R^{k+\mu} [D^{\alpha} u]_{\mu;q,\Omega}. 
\end{equation*}
for every $u \in C^{k,\mu;q}(B_R(X_0);\mathbb{R}^m)$.  Given a sequence $\{ u^{(j)} \}_{j=1,2,3,\ldots}$ in $C^{k;q}(\Omega;\mathbb{R}^m)$ and $u \in C^{k;q}(\Omega)$, we say $u^{(j)} \rightarrow u$ in $C^{k;q}(\Omega;\mathbb{R}^m)$ if $\|u^{(j)} - u\|_{C^{k;q}(\Omega)} \rightarrow 0$.  Note that if $\Omega$ is a bounded open set in $\mathbb{R}^n$ and $u^{(j)} = (u^{(j)}_1,u^{(j)}_2,\ldots,u^{(j)}_q)$, $j=1,2,3,\ldots$, is a sequence in $C^{k,\mu;q}(\Omega;\mathbb{R}^m)$ such that $\sup_j \|u^{(j)}\|_{C^{k,\mu;q}(\Omega)} < \infty$, then by Arzela-Ascoli applied using the sequences $\{ u^{(j)}_l |_{\mu;\Omega \cap \mathbb{R} \times (0,\infty) \times \mathbb{R}^{n-2}} \}$ and $\{ u^{(j)}_l |_{\mu;\Omega \cap \mathbb{R} \times (-\infty,0) \times \mathbb{R}^{n-2}} \}$ for $l = 1,2,\ldots,q$, there is a subsequence $\{ u^{(j_i)} \}_{i=1,2,3,\ldots}$ of $\{ u^{(j)} \}_{j=1,2,3,\ldots}$ and $u \in C^{k,\mu;q}(\Omega;\mathbb{R}^m)$ such that $u^{(j_i)} \rightarrow u$ in $C^{k;q}(\Omega;\mathbb{R}^m)$ as $i \rightarrow \infty$. 

Given open sets $\Omega' \subset \subset \Omega \subseteq \mathbb{R}^n$, $h \in \mathbb{R}$ and $\eta \in \mathbb{R}^{n-2}$ such that $0 < |h \eta| < \op{dist}(\Omega',\partial \Omega)$, and $u \in C^{0;q}(\Omega;\mathbb{R}^m)$, we define $\delta_{h,\eta} u = (\delta_{h,\eta} u_1,\delta_{h,\eta} u_2,\ldots,\delta_{h,\eta} u_q) \in C^{0;q}(\Omega';\mathbb{R}^m)$ by 
\begin{equation} \label{diffquot}
	\delta_{h,\eta} u_l(x,y) \equiv \frac{u_l(x,y+h\eta) - u_l(x,y)}{h} 
\end{equation} 
for all $(x,y) \in \Omega' \setminus [0,\infty) \times \{0\} \times \mathbb{R}^{n-2}$ and $l = 1,2,\ldots,q$.

\begin{defn}
Let $\Omega \subseteq \mathbb{R}^n$.  For $1 \leq p < \infty$, $L^{p;q}(\Omega;\mathbb{R}^m)$ denotes the set of Lebesgue measurable functions $u = (u_1,u_2,\ldots,u_q) : \Omega \rightarrow (\mathbb{R}^m)^q$ such that 
\begin{equation*}
	\|u\|_{L^{p;q}(\Omega)} \equiv \left( \int_{\Omega} \sum_{l=1}^q |u_l|^p \right)^{1/p} < \infty. 
\end{equation*}
$L^{\infty;q}(\Omega;\mathbb{R}^m)$ denotes the set of Lebesgue measurable functions $u = (u_1,u_2,\ldots,u_q) : \Omega \rightarrow (\mathbb{R}^m)^q$ such that $\sup_{\Omega} |u_l| < \infty$ for $l = 1,2,\ldots,q$.  
\end{defn}

\begin{defn}
Let $\Omega \subseteq \mathbb{R}^n$ be an open set, $1 \leq p < \infty$ and $k \geq 1$ be an integer.  $W^{k,p;q}(\Omega;\mathbb{R}^m)$ denotes the set of $u = (u_1,u_2,\ldots,u_q) \in L^{p;q}(\Omega;\mathbb{R}^m)$ such that for every $\alpha$ with $|\alpha| \leq k$ there exists a $v = (v_1,v_2,\ldots,v_q) \in L^{p;q}(\Omega;\mathbb{R}^m)$ (depending on $\alpha$) such that 
\begin{equation*}
	\int_{\Omega} \sum_{l=1}^q u_l D^{\alpha} \zeta_l = (-1)^{|\alpha|} \int_{\Omega} \sum_{l=1}^q v_l \zeta_l
\end{equation*}
for every $\zeta = (\zeta_1,\zeta_2,\ldots,\zeta_q) \in C_c^{k;q}(\Omega;\mathbb{R}^m)$.  $D^{\alpha} u \equiv v$ denotes the order $\alpha$ weak derivative of $u$. 
\end{defn}

To each measurable function $u : \Omega \rightarrow (\mathbb{R}^m)^q$ we associate a measurable $q$-valued function $\tilde u : \Omega \rightarrow \mathcal{A}_q(\mathbb{R}^m)$ given by $\tilde u(X) = \{u_1(X),u_2(X),\ldots,u_q(X)\}$ for $X \in \Omega$.  $\tilde u$ is unique up to its values $\mathcal{L}^n$-a.e. on $\Omega$.  

For each integer $k \geq 1$ and $1 \leq p < \infty$, 
\begin{equation*}
	\|u\|_{W^{k,p;q}(\Omega)} \equiv \sum_{|\alpha| \leq k} \sup_{\Omega} \|D^{\alpha} u\|_{L^{p;q}(\Omega)} 
\end{equation*}
for every $u \in W^{k,p;q}(\Omega;\mathbb{R}^m)$.  Given a sequence $u^{(j)} \in L^{p;q}(\Omega)$, $j = 1,2,3,\ldots$, and $u \in L^{p;q}(\Omega)$, we say $u^{(j)} \rightarrow u$ in $L^{p;q}(\Omega;\mathbb{R}^m)$ if $\|u^{(j)} - u\|_{L^{p;q}(\Omega)} \rightarrow 0$.  Given a sequence $\{ u^{(j)} \}_{j=1,2,3,\ldots}$ in $W^{k,p;q}(\Omega)$ and $u \in W^{k,p;q}(\Omega)$, we say $u^{(j)} \rightarrow u$ in $W^{k,p;q}(\Omega;\mathbb{R}^m)$ if $\|u^{(j)} - u\|_{W^{k,p;q}(\Omega)} \rightarrow 0$.  We let $W_0^{k,p;q}(\Omega;\mathbb{R}^m)$ denote the closure of $C_c^{k;q}(\Omega;\mathbb{R}^m)$ in the Banach space $W^{k,p;q}(\Omega;\mathbb{R}^m)$.  Note that if $u^{(j)} = (u^{(j)}_1,u^{(j)}_2,\ldots,u^{(j)}_q)$, $j=1,2,3,\ldots$, is a sequence in $W^{1,2;q}(\Omega;\mathbb{R}^m)$ such that $\sup_j \|u^{(j)}\|_{W^{1,2;q}(\Omega)} < \infty$, then by Rellich's compactness lemma applied to the sequences $\{u^{(j)}_l |_{\mu;\Omega \cap \mathbb{R} \times (0,\infty) \times \mathbb{R}^{n-2}}\}$ and $\{u^{(j)}_l |_{\mu;\Omega \cap \mathbb{R} \times (-\infty,0) \times \mathbb{R}^{n-2}}\}$ for $l = 1,2,\ldots,q$, there is a subsequence $\{ u^{(j_i)} \}_{i=1,2,3,\ldots}$ of $\{ u^{(j)} \}_{j=1,2,3,\ldots}$ and $u \in W^{1,2;q}(\Omega;\mathbb{R}^m)$ such that $u^{(j_i)} \rightarrow u$ strongly in $L^{2;q}(\Omega;\mathbb{R}^m)$ as $i \rightarrow \infty$ and $\|Du\|_{L^{2;q}(\Omega)} \leq \liminf_j \|Du^{(j)}\|_{L^{2;q}(\Omega)}$. 

Given a set $\Omega \subseteq \mathbb{R}^n$ and $u = (u_1,u_2,\ldots,u_q) : \Omega \setminus [0,\infty) \times \{0\} \times \mathbb{R}^{n-2} \rightarrow (\mathbb{R}^m)^q$, there exists $u_a : \Omega \setminus [0,\infty) \times \{0\} \times \mathbb{R}^{n-2} \rightarrow \mathbb{R}^m$ and $u_f = (u_{f,1},u_{f,2},\ldots,u_{f,q}) : \Omega \setminus [0,\infty) \times \{0\} \times \mathbb{R}^{n-2} \rightarrow (\mathbb{R}^m)^q$ such that 
\begin{equation} \label{averageandfree}
	u_l = u_a + u_{f,l} \text{ for } l = 1,2,\ldots,q, \hspace{3mm} \text{where} \hspace{3mm} u_a = \frac{1}{q} \sum_{j=1}^q u_j. 
\end{equation}
We call $u_a$ the average of $u$.  We say $u$ is average-free if $u_a = 0$ on $\Omega$.  $u_f$ is average-free and thus we call $u_f$ the average-free part of $u$. 

The first of our main results concern the existence of solutions to the Dirichlet problem for elliptic differential equations in the cylinder $\mathcal{C} = B^2_1(0) \times \mathbb{R}^{n-2}$.  Fix an integer $k \geq 2$ such that $k$ and $q$ are relatively prime.  We say $u \in C^{0;q}(\overline{\mathcal{C}};\mathbb{R}^m)$ is \textit{$k$-fold symmetric} if 
\begin{align*}
	u_l(re^{i\theta+i2\pi/k}, y) &= u_l(re^{i\theta}, y) \text{ if } 0 < \theta < 2\pi - 2\pi/k, \, l = 1,2,\ldots,q, \\
	u_1(re^{i\theta+i2\pi/k}, y) &= u_q(re^{i\theta}, y) \text{ if } 2\pi - 2\pi/k < \theta < 2\pi, \\
	u_l(re^{i\theta+i2\pi/k}, y) &= u_{l-1}(re^{i\theta}, y) \text{ if } 2\pi - 2\pi/k < \theta < 2\pi, \, l = 2,3,\ldots,q, 
\end{align*} 
for all $(re^{i\theta},y) \in \Omega$.  We will let $\mathbf{R}$ denote the $n \times n$ matrix such that $\mathbf{R}(re^{i\theta},y) = (re^{i\theta+i2\pi/k},y)$.  We write 
\begin{equation*}
	\mathbf{R} = (R^i_j)_{i,j=1,\ldots,n} = \left( \begin{matrix} 
		\cos(2\pi/k) & -\sin(2\pi/k) & 0 & 0 & \cdots & 0 \\
		\sin(2\pi/k) & \cos(2\pi/k) & 0 & 0 & \cdots & 0 \\
		0 & 0 & 1 & 0 & \cdots & 0 \\
		0 & 0 & 0 & 1 & \cdots & 0 \\
		\vdots & \vdots & \vdots & \vdots & \ddots & \vdots \\
		0 & 0 & 0 & 0 & \cdots & 1 \\
	\end{matrix} \right) 
\end{equation*}
where $R_i^j$ denotes the entry in the $i$-th row and $j$-th column of $\mathbf{R}$.  We say $u$ is \textit{periodic} with respect to $y_j$ with period $\rho_j > 0$ for $j = 1,2,\ldots,n-2$ if 
\begin{equation*}
	u_l(x,y + \rho_j e_j) = u_l(x,y) 
\end{equation*}
for all $(x,y) \in \Omega \setminus [0,\infty) \times \{0\} \times \mathbb{R}^{n-2}$, $l = 1,2,\ldots,q$, and $j = 1,2,\ldots,n-2$, where $e_1,e_2,\ldots,e_{n-2}$ denotes the standard basis for $\mathbb{R}^{n-2}$.  

We will be interested in the regularity of multivalued solutions up to the boundary of $\mathcal{C}$.  Recall that the continuity of multivalued functions on $\overline{\mathcal{C}}$ is defined above.  We say $\tilde u : \overline{\mathcal{C}} \rightarrow \mathcal{A}_q(\mathbb{R}^m)$ is differentiable at $X_0 \in \partial \mathcal{C}$ if (\ref{defn_derivative}) holds when the limit is taken over $h$ such that $X+h \in \overline{\mathcal{C}}$.  We let $C^1(\overline{\mathcal{C}};\mathcal{A}_q(\mathbb{R}^m))$ denote the space of $\tilde u : \overline{\mathcal{C}} \rightarrow \mathcal{A}_q(\mathbb{R}^m)$ that are continuously differentiable on $\overline{\mathcal{C}}$.  We say $\tilde u \in C^{1,\mu}(\overline{\mathcal{C}};\mathcal{A}_q(\mathbb{R}^m))$ for $\mu \in (0,1]$ if $\tilde u \in C^1(\overline{\mathcal{C}};\mathcal{A}_q(\mathbb{R}^m))$ and $[D\tilde u]_{\mu;B^2_1(0) \times B^{n-2}_{\rho}(0)} < \infty$ for all $\rho \in (0,\infty)$.  We define $C^{k;q}(\overline{\mathcal{C}};\mathbb{R}^m)$ for integers $k \geq 0$ by Definition \ref{defnCq} with $\overline{\mathcal{C}}$ in place of $\Omega$.  We say $u \in C^{k,\mu;q}(\overline{\mathcal{C}};\mathbb{R}^m)$ for an integer $k \geq 0$ and $\mu \in (0,1)$ if $u \in C^{k;q}(\overline{\mathcal{C}};\mathbb{R}^m)$ and $[D^k u]_{\mu;q,B^2_1(0) \times B^{n-2}_{\rho}(0)} < \infty$ for all $\rho \in (0,\infty)$.  Note that given a set $S \subseteq \overline{\mathcal{C}}$, we define $\inf_S u$ and $\sup_S u$ for $u \in C^0(\overline{\mathcal{C}})$ by (\ref{supinf}) and $\sup_S |u|$ for $u \in C^0(\overline{\mathcal{C}};\mathbb{R}^m)$ by (\ref{supinf2}).

We will first establish the existence of solutions in $C^{0;q}(\overline{\mathcal{C}}) \cap C^{1,\mu;q}(\mathcal{C})$ to weak Poisson equations:

\begin{thm} \label{poisson_thm}
Let $\mu \in (0,1/q)$ and $k > q$ be an integer such that $k$ and $q$ are relatively prime.  Given $f^j = (f^j_1,f^j_2,\ldots,f^j_q) \in C^{0,\mu;q}(\overline{\mathcal{C}})$ and $g, \varphi \in C^{0;q}(\overline{\mathcal{C}})$ such that 
\begin{align} \label{poisson_symmetry}
	f^j_l(re^{i\theta+i2\pi/k}, y) &= \sum_{p=1}^n R^j_p f^p_l(re^{i\theta}, y) \text{ if } 0 < \theta < 2\pi - 2\pi/k, \, l = 1,2,\ldots,q, \nonumber \\
	f^j_1(re^{i\theta+i2\pi/k}, y) &= \sum_{p=1}^n R^j_p f^p_q(re^{i\theta}, y) \text{ if } 2\pi - 2\pi/k < \theta < 2\pi, \nonumber \\
	f^j_l(re^{i\theta+i2\pi/k}, y) &= \sum_{p=1}^n R^j_p f^p_{l-1}(re^{i\theta}, y) \text{ if } 2\pi - 2\pi/k < \theta < 2\pi, \, l = 2,3,\ldots,q, 
\end{align} 
for all $(re^{i\theta},y) \in \mathcal{C}$, $g$ and $\varphi$ are $k$-fold symmetric, and 
\begin{equation*}
	\sup_{\partial \mathcal{C}} |\varphi| + [f]_{\mu;q,\mathcal{C}} + \sup_{\mathcal{C}} |g| < \infty, 
\end{equation*}
there is a $u \in C^{0;q}(\overline{\mathcal{C}}) \cap C^{1,\mu;q}(\mathcal{C})$ such that $u$ is $k$-fold symmetric, 
\begin{align} \label{poisson_thm_eqn}
	\int_{\mathcal{C}} \sum_{l=1}^q D_j u_l D_j \zeta_l &= \int_{\mathcal{C}} \sum_{l=1}^q (f^j_l D_j \zeta_l - g_l \zeta_l) 
		\text{ for all } \zeta \in C_c^{1;q}(\mathcal{C} \setminus \{0\} \times \mathbb{R}^{n-2}), \nonumber \\
	u_l &= \varphi_l \quad \text{ on } \partial \mathcal{C}, 
\end{align}
for $l = 1,2,\ldots,q$ and 
\begin{equation*}
	\sup_{\mathcal{C}} |u| \leq C \left( \sup_{\partial \mathcal{C}} |\varphi| + [f]_{\mu;q,\mathcal{C}} + \sup_{\mathcal{C}} |g| \right) .
\end{equation*}

Moreover, if $f^j$, $g$, and $\varphi$ are periodic with respect to $y_i$ with period $\rho_i > 0$ for $i = 1,2,\ldots,n-2$, then $u$ is the unique solution to (\ref{poisson_thm_eqn}) that is periodic with respect to $y_i$ with period $\rho_i$ for $i = 1,2,\ldots,n-2$. 
\end{thm}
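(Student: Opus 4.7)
The plan is to reduce the multivalued Dirichlet problem to a single-valued (degenerate) elliptic problem on a $q$-fold conformal cover of $\mathcal{C}$ via the change of variable $\xi_1+i\xi_2=(x_1+ix_2)^{1/q}$. Setting $\tilde{\mathcal{C}}=B_1^2(0)\times\mathbb{R}^{n-2}$ with coordinates $\xi=\rho e^{i\phi}$, I would define the lift $v(\rho e^{i\phi},y)=u_l(\rho^q e^{iq\phi},y)$ whenever $2\pi(l-1)/q<q\phi<2\pi l/q$. The matching conditions in Definition~\ref{defnCq} make $v$ continuous across the rays $\phi=2\pi l/q$, while the $k$-fold symmetry of $u$ together with the branch shift appearing in~(\ref{poisson_symmetry}) at $\theta=2\pi-2\pi/k$ reduces to the single-valued angular periodicity $v(\rho e^{i\phi+i2\pi/(qk)},y)=v(\rho e^{i\phi},y)$, with analogous rotated lifts $\widetilde F^j$ of $f^j$ and $\widetilde G$ of $g$. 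Since $\Delta_x=q^{-2}\rho^{-2(q-1)}\Delta_\xi$ and the Jacobian of $x=\xi^q$ equals $q^2\rho^{2(q-1)}$, (\ref{poisson_thm_eqn}) pulls back to the single-valued weighted weak Poisson equation
\[
\int_{\tilde{\mathcal{C}}}\bigl(D_\xi v\cdot D_\xi\psi+q^2\rho^{2(q-1)}D_y v\cdot D_y\psi\bigr)\,d\xi\,dy
=\int_{\tilde{\mathcal{C}}}\bigl(\widetilde F^j D_j\psi-q^2\rho^{2(q-1)}\widetilde G\,\psi\bigr)\,d\xi\,dy
\]
for test functions $\psi\in C^1_c(\tilde{\mathcal{C}}\setminus\{0\}\times\mathbb{R}^{n-2})$ with the same angular period.

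I would then construct $v$ via Lax--Milgram applied to the bilinear form above on the closed subspace of the weighted Sobolev space consisting of $2\pi/(qk)$-periodic-in-$\phi$ functions with trace $\widetilde\varphi$ on $\partial\tilde{\mathcal{C}}$; coercivity comes from a weighted Poincar\'e inequality, and the weight $\rho^{2(q-1)}$ belongs to the Muckenhoupt class $A_2$ on the two-dimensional $\xi$-slice so that standard De~Giorgi--Nash--Moser theory applies. The sup estimate then follows from the weak maximum principle for the lifted equation combined with a Morrey-type bound for the divergence-form inhomogeneity $\widetilde F^j$. Away from the axis $\{0\}\times\mathbb{R}^{n-2}$ each branch $u_l$ satisfies a genuine (nondegenerate) Poisson equation on its sector, so classical Schauder theory yields the interior $C^{1,\mu}$ regularity off the axis.

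The heart of the argument is $C^{1,\mu;q}$ regularity up to the axis. Expanding
\[
v(\rho e^{i\phi},y)=\sum_{j\in\mathbb{Z}} a_j(\rho,y)e^{ij\phi},
\]
the angular equivariance forces $a_j\equiv 0$ unless $qk\mid j$, so every nonconstant mode has $|j|\geq qk$. Inserting this expansion into the lifted PDE, Fourier-transforming in $y$, and analyzing the resulting Bessel-type ODE in $\rho$ for each mode with the H\"older source derived from $f$, one obtains $|a_j(\rho,y)-a_j(0,y)|\leq C\rho^{|j|}$ uniformly for $\rho\leq 1/2$, so that $|v(\rho e^{i\phi},y)-v(0,y)|\leq C\rho^{qk}$ near the axis. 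Since $D_x=q^{-1}\xi^{1-q}D_\xi$ in complex notation, this translates to $|Du(x,y)|\leq C|x|^{k-1}$ in the original coordinates, which, because $\mu<1/q\leq k-1$, is far better than the required $C^\mu$ modulus of continuity at the axis. Combining this axial decay with interior Schauder estimates on dyadic annuli via a standard Campanato iteration then yields the global $C^{1,\mu;q}$ estimate and identifies the trace of $u$ on $\partial\mathcal{C}$ with $\varphi$. The main obstacle is keeping the Fourier-mode ODE analysis quantitative under the degenerate $y$-weight and the inhomogeneities $\widetilde F^j,\widetilde G$; this is precisely where the hypotheses $k>q$ and $\gcd(k,q)=1$ intervene, ruling out low-order modes of $v$ and forcing the decay at the axis.

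Finally, for uniqueness under $y$-periodicity, I would quotient $y$ by the lattice $\bigoplus_{i=1}^{n-2}\rho_i\mathbb{Z}e_i$ so that the base becomes $B_1^2(0)\times\mathbb{T}^{n-2}$. Two periodic solutions would have a difference $w=u^{(1)}-u^{(2)}$ whose single-valued lift $\omega$ belongs to $W^{1,2}_0$ (with the $\rho^{2(q-1)}$ weight on $y$-derivatives) on the compact quotient and solves the homogeneous lifted equation; testing against $\omega$ itself gives $\int\bigl(|D_\xi\omega|^2+q^2\rho^{2(q-1)}|D_y\omega|^2\bigr)=0$, whence $\omega\equiv 0$ and $u^{(1)}=u^{(2)}$.
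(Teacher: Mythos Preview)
Your outline shares the key first move with the paper --- the conformal unwinding $\xi=(x_1+ix_2)^{1/q}$ that turns $u\in C^{k;q}$ into a single-valued $v$ on $\tilde{\mathcal C}$ --- but from that point on the two arguments diverge substantially, and your version has a concrete gap in the axial regularity step.

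\textbf{Where the approaches differ.} The paper does \emph{not} Fourier--expand in the angular variable $\phi$. Instead it first reduces by linearity to average-free data (solving the average part by ordinary single-valued theory), assumes $y$-periodicity, and expands in Fourier series in the $y$ variables. Each $y$-mode $u_{0,z}(\xi)$ then solves a genuine two-dimensional uniformly elliptic equation $\Delta_\xi u_{0,z}-c_z|\xi|^{2q-2}u_{0,z}=q^2|\xi|^{2q-2}g_{0,z}$ on $B_1^2(0)$, whose existence and $C^\infty$ regularity are classical. The decay at the axis comes from a finite-order Taylor argument: the degree-$(2q-1)$ Taylor polynomial of $u_{0,z}$ is harmonic, hence $\sum_{j}c_j\xi^j$, and the average-free condition together with the $k$-fold symmetry kill $c_0,\ldots,c_q$; this yields $|u_{0,z}(\xi)|\le C|\xi|^{q+1}$, i.e.\ $|u_l(x)|\le C|x|^{1+1/q}$, which is exactly $C^{1,1/q;q}$ control. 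Uniform $C^{1,\mu;q}$ bounds on the partial sums then come from the Schauder estimate (Lemma~\ref{schauder_wg}), and approximation removes the smoothness and periodicity hypotheses.

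\textbf{The gap.} Your Bessel--ODE step claims $|a_j(\rho,y)-a_j(0,y)|\le C\rho^{|j|}$ for each surviving angular mode $j$ (with $|j|\ge qk$), and from this you deduce $|v-v(0,y)|\le C\rho^{qk}$ and $|Du|\le C|x|^{k-1}$. This overshoots: the $j$-th radial ODE has an inhomogeneity $q^2\rho^{2q-2}\widehat{\widetilde G}_j$ coming from $g$ (and an analogous contribution from $f$), and the corresponding particular solution behaves like $\rho^{2q}$ near $\rho=0$, not like $\rho^{|j|}$. Since $qk>2q$ for $q\ge 2$, the source contribution dominates and caps the decay at $\rho^{2q}$. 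The conclusion $|Du|\le C|x|^{k-1}$ is therefore false in general when $g\not\equiv 0$; what survives is $|Du|\le C|x|$ (or $|x|^{1/q}$ once one accounts carefully for the $C^{0,\mu}$ source $f$), which is still enough for $C^{1,\mu;q}$ with $\mu<1/q$, but your stated mechanism is wrong. More seriously, the ``Bessel-type ODE analysis'' you invoke must be carried out uniformly in the $y$-Fourier variable and summed in $j$; you do not indicate how this is controlled, whereas the paper sidesteps the issue entirely by working with the full $2$-dimensional $\xi$-equation for each $y$-mode and invoking its own $C^{1,\mu;q}$ Schauder machinery (Lemmas~\ref{schauder_div} and~\ref{schauder_wg}) rather than Campanato iteration.

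A smaller point: you never isolate the average-free part, which in the paper is where the symmetry constraint actually bites. Without that split your mode count mixes the single-valued average (mode $j=0$ and its multiples of $q$) with the genuinely multivalued remainder, and the paper's Taylor argument would not go through.
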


Note that in the special case where $f^j = 0$ and $g = 0$, the $q$-valued function $\tilde u(X) = \{u_1(X),u_2(X),\ldots,u_q(X)\}$ associated with the solution $u$ obtained in Theorem \ref{poisson_thm} is a $q$-valued function in $C^0(\overline{\mathcal{C}};\mathcal{A}_q(\mathbb{R})) \cap C^{1,\mu}(\mathcal{C};\mathcal{A}_q(\mathbb{R}))$ such that $\Delta \tilde u = 0$ weakly in $\mathcal{C} \setminus \mathcal{B}_{\tilde u}$. 

To prove Theorem \ref{poisson_thm}, we first assume $f^j$, $g$, and $\varphi$ are periodic with respect to each $y_i$, as the general result follows by approximation of $f^j$, $g$, and $\varphi$.  We use the change of variable $\xi_1+i\xi_2 = (x_1+ix_2)^{1/q}$ to transform $u(x_1,x_2,y)$ to a single-valued function $u_0(\xi_1,\xi_2,y)$ defined by $u_0(re^{i\theta},y) = u_l(r^{1/q} e^{i\theta/q},y)$ for $r \in [0,1]$, $\theta \in (2(l-1)\pi,2l\pi)$, and $y \in \mathbb{R}^{n-2}$.  The single-valued function $u_0$ satisfies a singular differential equation which we solve using Fourier series with respect to the $y_i$ variables and the existence theory for single-valued solutions to elliptic equations to solve for the Fourier coefficients as functions of $\xi_1$ and $\xi_2$.  By linearity, we can assume that $f^j$, $g$, and $\varphi$ are all average-free and therefore the constructed solution $u$ will be average-free and $k$-fold symmetric.  The average-free and $k$-fold symmetry conditions on $u$ will guarantee that $u(x,y)$ decays sufficiently quickly as $x$ approaches zero to guarantee that $u \in C^{1,\mu;q}(\mathcal{C})$.  

Using Theorem \ref{poisson_thm} and the contraction mapping principle, we can construct solutions to quasilinear elliptic systems with small boundary data $\varphi$ in $C^{1,\mu;q}(\overline{\mathcal{C}};\mathbb{R}^m)$:

\begin{thm} \label{theorem1}
Let $m \geq 1$ be an integer, $\mu \in (0,1/q)$, and $k > q$ be an integer such that $k$ and $q$ are relatively prime.  Let $F^i_{\kappa} \in C^2(\mathbb{R}^{mn})$ and $G_{\kappa} \in C^1(\mathbb{R}^m \times \mathbb{R}^{mn})$ be single-valued functions, where $\mathbb{R}^{mn}$ is the space of $m \times n$ matrices, such that $F^i_{\kappa}(0) = 0$, $DF^i_{\kappa}(0) = 0$, $G_{\kappa}(0,0) = 0$, $DG_{\kappa}(0,0) = 0$, and 
\begin{equation*}
	F^i_{\kappa}(P \mathbf{R}) = R^j_i F^j_{\kappa}(P), \quad G_{\kappa}(Z, P \mathbf{R}) = G_{\kappa}(Z,P) 
\end{equation*}
for all $P \in \mathbb{R}^{mn}$.  For some $\varepsilon > 0$ depending on $m$, $n$, $q$, $\mu$, $F^i_{\kappa}$, and $G_{\kappa}$, if $\varphi \in C^{1,\mu;q}(\overline{\mathcal{C}},\mathbb{R}^m)$ is $k$-fold symmetric with $\|\varphi\|_{C^{1,\mu;q}(\mathcal{C})} \leq \varepsilon$, there exists a solution $u \in C^{1,\mu;q}(\overline{\mathcal{C}};\mathbb{R}^m)$ to 
\begin{align} \label{theorem1_dirprob}
	\int_{\mathcal{C}} \sum_{l=1}^q D_j u_l^{\kappa} D_j \zeta_l^{\kappa} 
		&= \int_{\mathcal{C}} \sum_{l=1}^q (F^i_{\kappa}(Du_l) D_i \zeta^{\kappa} - G_{\kappa}(u_l,Du_l) \zeta^{\kappa}) \text{ for all } 
		\zeta \in C_c^{1;q}(\mathcal{C} \setminus \{0\} \times \mathbb{R}^{n-2};\mathbb{R}^m), \nonumber \\
	u_l &= \varphi_l \text{ on } \partial \mathcal{C} \text{ for } l = 1,2,\ldots,q. 
\end{align}
Moreover, $u$ is $k$-fold symmetric and $\|u\|_{C^{1,\mu;q}(\mathcal{C})} \leq \varepsilon$.  The $q$-valued function $\tilde u(X) = \{u_1(X),$ $u_2(X),\ldots, u_q(X)\}$ associated with $u$ is a $q$-valued solution in $C^{1,\mu}(\overline{\mathcal{C}};\mathcal{A}_q(\mathbb{R}^m))$ to 
\begin{equation*}
	\Delta \tilde u^{\kappa} - D_i F^i_{\kappa}(D\tilde u) - G_{\kappa}(\tilde u,D\tilde u) = 0 \text{ weakly in } \mathcal{C} \setminus \mathcal{B}_{\tilde u}. 
\end{equation*}
\end{thm}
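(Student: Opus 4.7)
The plan is to set up a fixed-point equation of the form $u = T(u)$ on a small closed ball in $C^{1,\mu;q}(\overline{\mathcal{C}};\mathbb{R}^m)$ and solve it via the Banach contraction principle, with Theorem \ref{poisson_thm} supplying the solvability of each linear step. Fix $\varepsilon > 0$ to be chosen and let $X_\varepsilon$ denote the set of $k$-fold symmetric $w \in C^{1,\mu;q}(\overline{\mathcal{C}};\mathbb{R}^m)$ with $w = \varphi$ on $\partial \mathcal{C}$ and $\|w\|_{C^{1,\mu;q}(\mathcal{C})} \leq \varepsilon$, which is a closed subset of a Banach space. For $w \in X_\varepsilon$ I would define $T(w) = u$ to be the $k$-fold symmetric solution supplied by Theorem \ref{poisson_thm} of the Poisson problem (\ref{poisson_thm_eqn}) with forcing data $f^j_l := F^j_\kappa(Dw_l)$, $g_l := -G_\kappa(w_l, Dw_l)$, and Dirichlet data $\varphi$.

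The first step is to verify the hypotheses of Theorem \ref{poisson_thm} for this choice of data. Since $w \in C^{1,\mu;q}$ and $F^j_\kappa \in C^2$, $G_\kappa \in C^1$, the compositions $F^j_\kappa(Dw_l)$ and $G_\kappa(w_l, Dw_l)$ inherit the required one-sided regularity across the branch locus. The covariance assumptions $F^i_\kappa(P\mathbf{R}) = R^j_i F^j_\kappa(P)$ and $G_\kappa(Z, P\mathbf{R}) = G_\kappa(Z, P)$, combined with the way $Dw_l$ transforms under $\mathbf{R}$ when $w$ is $k$-fold symmetric, yield exactly the twisted symmetry law (\ref{poisson_symmetry}) for $f^j$ and the $k$-fold symmetry of $g$, so Theorem \ref{poisson_thm} applies to produce $u$.

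The self-map and contraction properties rely on $F^i_\kappa, DF^i_\kappa, G_\kappa$ and $DG_\kappa$ all vanishing at the origin. Taylor expansion gives $|F^i_\kappa(P)| \leq C|P|^2$, $|DF^i_\kappa(P)| \leq C|P|$, and $|G_\kappa(Z,P)| \leq C(|Z| + |P|)^2$ in a neighbourhood of $0$, which together with a short computation yield $[f]_{\mu;q,\mathcal{C}} + \sup_{\mathcal{C}}|g| \leq C\varepsilon^2$. After upgrading Theorem \ref{poisson_thm} to the linear $C^{1,\mu;q}$ estimate $\|u\|_{C^{1,\mu;q}(\mathcal{C})} \leq C(\|\varphi\|_{C^{1,\mu;q}(\mathcal{C})} + [f]_{\mu;q,\mathcal{C}} + \sup_{\mathcal{C}}|g|)$, which follows from interior and boundary Schauder estimates on the transformed single-valued equation in the $\xi$-coordinates together with the decay analysis employed in the proof of Theorem \ref{poisson_thm}, one gets $\|T(w)\|_{C^{1,\mu;q}(\mathcal{C})} \leq C(\|\varphi\|_{C^{1,\mu;q}(\mathcal{C})} + C\varepsilon^2) \leq \varepsilon$ by first choosing $\varepsilon$ small enough that $C^2\varepsilon \leq 1/2$ and then requiring $\|\varphi\|_{C^{1,\mu;q}(\mathcal{C})} \leq \varepsilon/(2C)$.

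For the contraction estimate, given $w_1, w_2 \in X_\varepsilon$ with $u_i = T(w_i)$, the difference $u_1 - u_2$ solves the Poisson problem with zero boundary data and right-hand side data $F^j_\kappa(Dw_{1,l}) - F^j_\kappa(Dw_{2,l})$ and $G_\kappa(w_{1,l}, Dw_{1,l}) - G_\kappa(w_{2,l}, Dw_{2,l})$. The mean value theorem applied along the segment joining the two arguments, using $|DF^i_\kappa| + |DG_\kappa| \leq C\varepsilon$ on the relevant range, gives forcing data of size $C\varepsilon \|w_1 - w_2\|_{C^{1,\mu;q}(\mathcal{C})}$, hence $\|u_1 - u_2\|_{C^{1,\mu;q}(\mathcal{C})} \leq C\varepsilon \|w_1 - w_2\|_{C^{1,\mu;q}(\mathcal{C})}$, which is a strict contraction for $\varepsilon$ small. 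Banach's fixed point theorem then produces the required $u \in X_\varepsilon$, and the final assertion about the associated $q$-valued $\tilde u$ is immediate from the definitions and the $k$-fold symmetry and regularity of $u$. The main obstacle is the Schauder-type upgrade of Theorem \ref{poisson_thm}: one must promote the sup bound stated there to a \emph{linear} $C^{1,\mu;q}$ bound on $u$ in terms of the data, uniformly up to the branch axis $\{0\} \times \mathbb{R}^{n-2}$, which requires revisiting the $q$-th root reduction used for Theorem \ref{poisson_thm} rather than applying it as a black box.
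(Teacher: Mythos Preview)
Your approach is essentially the paper's: define $T$ via the linear Poisson solver of Theorem \ref{poisson_thm}, then run the contraction mapping principle on a small ball in $C^{1,\mu;q}$ using the quadratic vanishing of $F$ and $G$ at the origin. Two remarks on how the paper actually executes this. First, the ``Schauder-type upgrade'' you flag as the main obstacle is already available in the paper as Lemma \ref{schauder_wg}, which gives the global estimate $\|u\|_{C^{1,\mu;q}(\overline{\mathcal{C}})} \leq C(\|\varphi\|_{C^{1,\mu;q}(\overline{\mathcal{C}})} + [f]_{\mu;q,\mathcal{C}} + \sup_{\mathcal{C}}|g|)$ directly; you do not need to revisit the $q$-th root reduction. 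Second, the paper does \emph{not} run the contraction argument on all of $\mathcal{C}$ at once. It first restricts to data periodic in each $y_j$ with period $\rho_j$, so that the relevant function space is a genuine Banach space on a compact fundamental domain, Theorem \ref{poisson_thm} yields a \emph{unique} (hence well-defined) $T$, and Lemma \ref{schauder_wg} applies with a constant independent of the periods. Only after obtaining the periodic fixed point does the paper pass to general $\varphi$ by approximating with periodic $\varphi^{(\nu)}$, $\rho_\nu \to \infty$, and extracting a convergent subsequence of the corresponding solutions $u^{(\nu)}$ using the uniform $C^{1,\mu;q}$ bound. Your write-up omits this two-step structure; without it, on the unbounded cylinder you would need to argue separately that $T$ is single-valued and that the global Schauder constant is finite.
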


In particular, Theorem \ref{theorem1} yields $q$-valued solutions $\tilde u \in C^{1,\mu}(\mathcal{C};\mathcal{A}_q(\mathbb{R}^m))$ to the minimal surface system in $\mathcal{C} \setminus \mathcal{B}_{\tilde u}$.  For sufficiently small $\varepsilon > 0$, these solutions to the minimal surface system are stable in the sense that 
\begin{equation} \label{stability}
	\int_{\Sigma_{\tilde u}} \left( \sum_{i=1}^n |(D_{\tau_i} \mathbf{X})^{\perp}|^2 - \sum_{i,j=1}^n |\mathbf{X} \cdot A(\tau_i,\tau_j)|^2 \right) \geq 0 
\end{equation}
for all normal vector fields $\mathbf{X} \in C^0_c(\Sigma_{\tilde u};\mathbb{R}^{n+m}) \cap W^{1,2}(\Sigma_{\tilde u}, \mathbb{R}^{n+m})$, where $\Sigma_{\tilde u}$ is the graph of $\tilde u$ regarded as an immersed submanifold.  (\ref{stability}) holds true in the case that $\mathbf{X} = 0$ near $\{0\} \times \mathbb{R}^{n-2+m}$ by the convexity of the area functional.  To prove (\ref{stability}) for general $\mathbf{X}$, for $\delta \in (0,1)$ let $\chi_{\delta} \in C^1([0,\infty))$ be the logarithmic cutoff function given by $\chi_{\delta} = 0$ on $B^2_{\delta^2}(0) \times \mathbb{R}^{n-2-m}$, $\chi_{\delta}(x,y,Z) = -\log(|x|/\delta^2)/\log(\delta)$ if $x \in B^2_{\delta}(0) \setminus B^2_{\delta^2}(0)$, $y \in \mathbb{R}^{n-2}$, and $Z \in \mathbb{R}^m$, and $\chi_{\delta} = 1$ on $\mathbb{R}^{n+m} \setminus B^2_{\delta}(0) \times \mathbb{R}^{n-2+m}$.  Replace $\mathbf{X}$ by $\chi_{\delta} \mathbf{X}$ in (\ref{stability}) and let $\delta \downarrow 0$ to obtain (\ref{stability}) with the original $\mathbf{X}$.  Theorem \ref{theorem1} also yields $q$-valued solutions to the Euler-Lagrange equations for functionals of the form $\int_{\mathcal{C}} (|Du|^2 + f(Du))$ where $f \in C^3(\mathbb{R}^{mn};\mathbb{R})$ is a single-valued function such that $Df(0) = 0$, $D^2 f(0) = 0$, and $f(P \mathbf{R}_{2\pi/k}) = f(P)$ for all $P \in \mathbb{R}^{mn}$.  

Note that Theorem \ref{theorem1} would not be true without the assumption of small boundary data as a consequence of~\cite{3Non}, which showed that for some boundary data there are no $C^1$ single-valued solutions to the Dirichlet problem for the minimal surface system.

We also use Theorem \ref{poisson_thm} and the Leray-Schauder theory to construct solutions to general quasilinear elliptic equations (without assuming small boundary data):

\begin{thm} \label{theorem2}
Let $k > q$ be an integer such that $k$ and $q$ are relatively prime.  Let $A^i \in C^2(\mathbb{R}^n)$, $B \in C^1(\mathbb{R} \times \mathbb{R}^n)$ be single-valued functions such that 
\begin{equation} \label{theorem2_symmetry}
	A^i(P \mathbf{R}) = R^j_i A^j(P), \quad B(Z,P \mathbf{R}) = B(Z,P). 
\end{equation}
Suppose  
\begin{equation*}
	0 < \lambda(P) |\xi|^2 \leq D_{P_j} A^i(P) \xi_i \xi_j \leq \Lambda(P) |\xi|^2 \quad \text{for all } \xi \in \mathbb{R}^n 
\end{equation*}
for some continuous positive functions $\lambda$ and $\Lambda$, the structure conditions  
\begin{align} 
	\label{theorem2_structure1} B(Z,P) \op{sgn} Z/\lambda(P) &\leq \beta_1 |P| + \beta_2, \\
	\label{theorem2_structure2} |\Lambda(P)| + |B(Z,P)| &\leq \beta_3 \lambda(P) |P|^2 \text{ if } |P| \geq 1, 
\end{align}
for some constants $\beta_1, \beta_2, \beta_3 \in (0,\infty)$, and $B(Z,P)$ is non-increasing in $Z$ for fixed $P \in \mathbb{R}^n$.  Let $\varphi \in C^{2;q}(\overline{\mathcal{C}})$ is $k$-fold symmetric with $\|\varphi\|_{C^{2;q}(\overline{\mathcal{C}})} < \infty$.  Then there exists a $u \in C^{1;q}(\overline{\mathcal{C}})$ such that $u \in C^{1,\mu;q}(\overline{\mathcal{C}})$ for every $\mu \in (0,1/q)$ and 
\begin{align} \label{theorem2_dirprob}
	\int_{\mathcal{C}} \sum_{l=1}^q (A^i(Du_l) D_i \zeta_l - B(u_l,Du_l) \zeta_l) &= 0
		\text{ for all } \zeta \in C_c^{1;q}(\mathcal{C} \setminus \{0\} \times \mathbb{R}^{n-2}), \nonumber \\
	u_l &= \varphi_l \text{ on } \partial \mathcal{C} \text{ for } l = 1,2,\ldots,q. 
\end{align}
Moreover, $u$ is $k$-fold symmetric.  The $q$-valued function $\tilde u(X) = \{u_1(X),u_2(X),\ldots,u_q(X)\}$ associated with $u$ satisfies $\tilde u \in C^{1,\mu}(\overline{\mathcal{C}};\mathcal{A}_q(\mathbb{R}))$ for all $\mu \in (0,1/q)$ and 
\begin{equation} \label{theorem2_diffeqn}
	D_i A^i(D\tilde u) + B(\tilde u,D\tilde u) = 0 \text{ weakly in } \mathcal{C} \setminus \mathcal{B}_{\tilde u}. 
\end{equation}
\end{thm}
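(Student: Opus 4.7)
The plan is to apply the Leray-Schauder fixed-point theorem, using Theorem \ref{poisson_thm} as the linear inverse. I would rewrite $D_i A^i(Du_l) + B(u_l, Du_l) = 0$ as
\begin{equation*}
    \Delta u_l = D_i\bigl(D_i u_l - A^i(Du_l)\bigr) - B(u_l, Du_l)
\end{equation*}
and define $T_\sigma : C^{1,\mu;q}(\overline{\mathcal{C}}) \to C^{1,\mu;q}(\overline{\mathcal{C}})$ by $T_\sigma(v) = u$, where $u$ is the solution produced by Theorem \ref{poisson_thm} with data $f^j_l = \sigma(D_j v_l - A^j(Dv_l))$, $g_l = -\sigma B(v_l, Dv_l)$ and boundary values $\sigma \varphi_l$. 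The symmetry assumption (\ref{theorem2_symmetry}) together with the $k$-fold symmetry of $v$ forces $f^j$ to satisfy (\ref{poisson_symmetry}) and $g$ to be $k$-fold symmetric, so Theorem \ref{poisson_thm} applies and $u$ is itself $k$-fold symmetric. Fixed points of $T_1$ are solutions of (\ref{theorem2_dirprob}).

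Next I would verify that $T_\sigma$ is jointly continuous and compact and that $T_0 \equiv 0$. A $C^{1,\mu;q}$ bound on $v$ produces a $C^{0,\mu;q}$ bound on the data, so Theorem \ref{poisson_thm} gives a $C^{1,\mu;q}$ bound on $u$ in the interior; combining this with boundary Schauder estimates (using $\varphi \in C^{2;q}$) yields a $C^{1,\mu';q}$ bound for some $\mu' > \mu$, and the embedding $C^{1,\mu';q}(\overline{\mathcal{C}}) \hookrightarrow C^{1,\mu;q}(\overline{\mathcal{C}})$ is compact (by the Arzela-Ascoli argument from Section \ref{sec:preliminaries}). Joint continuity follows from the linearity and symmetry of the operator from Theorem \ref{poisson_thm}.

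The heart of the argument is a uniform a priori $C^{1,\mu;q}$ estimate on any fixed point $u = T_\sigma(u)$, which I would obtain in three classical steps adapted to the multivalued setting. First, a $C^{0;q}$ estimate: away from the branch axis each component $u_l$ is a single-valued solution, so the monotonicity of $B$ in $Z$ and the structure condition (\ref{theorem2_structure1}) yield $\sup_{\mathcal{C}}|u| \le C(\beta_1,\beta_2,\sup_{\partial\mathcal{C}}|\varphi|)$ by the standard maximum principle argument of Gilbarg-Trudinger Thm. 10.3, with attainment on the axis covered by the continuity built into (\ref{supinf}). Second, a global gradient bound $\sup_{\mathcal{C}}|Du| \le C$: the interior bound follows from (\ref{theorem2_structure2}) via the Ladyzhenskaya-Uraltseva-Trudinger gradient estimate applied to each single-valued sheet, while the boundary bound uses barriers constructed from $\varphi \in C^{2;q}$, and the $C^{1;q}$ matching conditions of Definition \ref{defnCq} piece these sheet-wise estimates together uniformly across the axis. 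Third, having $\|u\|_{C^{1;q}} \le C$, I would re-feed $u$ into the linear Poisson equation and invoke Theorem \ref{poisson_thm} to control the $C^{1,\mu;q}$ norm by $[Du - A(Du)]_{\mu;q,\mathcal{C}} + \sup|B(u,Du)|$, closing the bound.

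The principal obstacle is this third step, since propagating a $C^{1;q}$ bound to a $C^{0,\mu;q}$ bound on $Du$ with constants uniform across the branch axis is not routine. I expect to dispatch it via the change of variable $\xi_1 + i\xi_2 = (x_1+ix_2)^{1/q}$ from the proof of Theorem \ref{poisson_thm}, which lifts the average-free part of $u$ (guaranteed by the $k$-fold symmetry and (\ref{averageandfree})) to a single-valued function on a branched cover satisfying a mildly degenerate elliptic equation; the Hölder estimate for $Du$ with any exponent $\mu \in (0,1/q)$ then follows from the same Fourier-series decay exploited in the proof of Theorem \ref{poisson_thm}. With the a priori bound in hand, Leray-Schauder produces a fixed point $u = T_1(u)$ in $C^{1,\mu;q}(\overline{\mathcal{C}})$; the $k$-fold symmetry of $u$ is preserved by $T_\sigma$, and the associated $\tilde u$ satisfies (\ref{theorem2_diffeqn}) weakly on $\mathcal{C}\setminus\mathcal{B}_{\tilde u}$ because on each ball disjoint from $\{0\}\times\mathbb{R}^{n-2}$ every sheet $u_l$ is a classical single-valued solution of the original equation.
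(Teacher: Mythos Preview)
Your overall Leray--Schauder strategy is right, but the specific linearization you chose breaks the compactness step, and that is the crux of the whole argument. With your map $T_\sigma$, the data are $f^j_l = \sigma(D_j v_l - A^j(Dv_l)) \in C^{0,\mu;q}$ when $v \in C^{1,\mu;q}$, and the divergence-form Schauder theory (Theorem~\ref{poisson_thm} together with Lemma~\ref{schauder_div}) returns $u \in C^{1,\mu;q}$ with the \emph{same} exponent $\mu$; there is no gain, so $T_\sigma$ is not compact on $C^{1,\mu;q}(\overline{\mathcal{C}})$. Your claim that boundary Schauder estimates upgrade the interior regularity to $C^{1,\mu';q}$ with $\mu' > \mu$ is not correct: boundary estimates do not improve interior H\"older exponents. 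The paper confronts exactly this obstacle by freezing coefficients in \emph{non-divergence} form, defining $T$ via $(D_{P_j}A^i)(Dv_l)\,D_{ij}u_l + B(v_l,Dv_l) = 0$, and invoking the special Schauder estimate Lemma~\ref{schauder_strong}, which takes $a^{ij},f \in C^{0,\mu;q}$ and returns $u \in C^{1,\tau;q}$ for every $\tau \in (\mu,1/q)$. That gain of H\"older exponent is what makes $T$ compact, and the paper explicitly flags Lemma~\ref{schauder_strong} as new and essential for this purpose. The price is that one must separately establish that this non-divergence linear problem is solvable in $C^{1,\tau;q}$ (Lemma~\ref{Texistslemma}, via method of continuity starting from Theorem~\ref{poisson_thm}).

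Your a~priori estimates also diverge from the paper in ways that leave gaps. For the global gradient bound, sheet-wise interior estimates of Ladyzhenskaya--Ural'tseva type are applied on the slit domain $\mathcal{C}\setminus[0,\infty)\times\{0\}\times\mathbb{R}^{n-2}$, and it is not clear they are uniform as one approaches the axis; the paper instead shows $u \in W^{2,2;q}_{\mathrm{loc}}$, proves that $v_l = |Du_l|^2$ is a weak subsolution of a linear equation in $C^{1;q}$ form, and applies the multivalued weak maximum principle to get $\sup_{\mathcal{C}}|Du| \le \sup_{\partial\mathcal{C}}|Du|$ directly. For the H\"older estimate on $Du$, your proposed route through the $q$-th root change of variables and Fourier series is specific to the constant-coefficient Laplacian and does not apply to the variable-coefficient equation satisfied by $D_k u$; the paper instead uses the De~Giorgi--Nash--Moser estimate Lemma~\ref{DgNM_Holder_thm} on the differentiated equation~(\ref{theorem2_DNeqn}).
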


Note that to prove Theorem \ref{theorem2} we need a new $C^{1,\tau;q}$ Schauder estimate (Lemma \ref{schauder_strong} in Section~\ref{sec:elliptictheory}) in order to construct a compact map to apply the Leray-Schauder theory.

The proof of Theorem \ref{theorem2} uses the maximum principle to obtain a global gradient estimate.  By obtaining interior gradient estimates via~\cite{Simon_interior} and using an approximation argument, we can assume $\varphi \in C^{0;q}(\overline{\mathcal{C}})$.  See Section 4 of~\cite{Simon_interior} for other examples of structural conditions on $A^i$ and $B$ that imply interior gradient estimates. 

\begin{cor} \label{corollary2}
Let $k > q$ be an integer such that $k$ and $q$ are relatively prime.  Let $A^i \in C^2(\mathbb{R}^n)$, $B \in C^1(\mathbb{R} \times \mathbb{R}^n)$ be single-valued functions satisfying (\ref{theorem2_symmetry}).  Let $v(P) = (1+|P|^2)^{1/2}$ and $g^{ij}(P) = \delta^{ij} - P_i P_j/(1+|P|^2)$ and suppose the structure conditions (\ref{theorem2_structure1}), 
\begin{gather*} 
	P_i A^i(P) \geq v(P) - \gamma_1, \hspace{5mm} |A(P)| \leq \gamma_2, \hspace{5mm} |B(Z,P)| \leq \gamma_2/v(P), \\
	v(P) D_{P_j} A^i(P) \xi_j \xi_j \geq g^{ij}(P) \xi_i \xi_j \text{ for all } \xi \in \mathbb{R}^n, \\
	v(P) |D_{P_j} A^i(P) \xi_i \eta_j| \leq \gamma_2 (g^{ij}(P) \xi_i \xi_j)^{1/2} (g^{ij}(P) \eta_i \eta_j)^{1/2} 
		\text{ for all } \xi, \eta \in \mathbb{R}^n, 
\end{gather*}
hold for all $Z \in \mathbb{R}$ and $P \in \mathbb{R}^n$ for some constants $\beta_1, \beta_2, \beta_3 \in (0,\infty)$, $\gamma_1 \in [0,1)$, and $\gamma_2 \in (0,\infty)$.  Also suppose $B(Z,P)$ is non-increasing in $Z$ for fixed $P \in \mathbb{R}^n$.  Let $\varphi \in C^{0;q}(\overline{\mathcal{C}})$ is $k$-fold symmetric with $\sup_{\partial \mathcal{C}} |\varphi| < \infty$.  Then there exists a solution $u \in C^{0;q}(\overline{\mathcal{C}}) \cap C^{1;q}(\mathcal{C})$ to (\ref{theorem2_dirprob}) such that $u$ is $k$-fold symmetric and $u \in C^{1,\mu;q}(\overline{\mathcal{C}})$ for every $\mu \in (0,1/q)$.  The $q$-valued function $\tilde u(X) = \{u_1(X),u_2(X),\ldots,u_q(X)\}$ associated with $u$ is a $q$-valued solution in $C^0(\overline{\mathcal{C}};\mathcal{A}_q(\mathbb{R}))$ and $C^{1,\mu}(\mathcal{C};\mathcal{A}_q(\mathbb{R}))$ for all $\mu \in (0,1/q)$ to (\ref{theorem2_diffeqn}). 
\end{cor}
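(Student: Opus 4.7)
The plan is to deduce Corollary \ref{corollary2} from Theorem \ref{theorem2} by an approximation argument in the boundary data, combined with the interior gradient estimates of Simon \cite{Simon_interior} (which are available precisely because the structure conditions in the corollary are tailored to equations of mean-curvature type).

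First I would approximate the boundary data. Given $\varphi \in C^{0;q}(\overline{\mathcal{C}})$ that is $k$-fold symmetric with $\sup_{\partial\mathcal{C}}|\varphi|<\infty$, extend $\varphi$ to $\overline{\mathcal{C}}$ and convolve with a $k$-fold symmetric mollifier in a way that respects the symmetry condition of Definition \ref{defnCq}, producing a sequence $\varphi^{(j)} \in C^{2;q}(\overline{\mathcal{C}})$ that is $k$-fold symmetric, satisfies $\|\varphi^{(j)}\|_{C^{2;q}(\overline{\mathcal{C}})}<\infty$, converges to $\varphi$ uniformly on compact subsets of $\partial\mathcal{C}$, and has $\sup|\varphi^{(j)}|\le\sup|\varphi|+1$. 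Theorem \ref{theorem2} then yields a sequence $u^{(j)} \in C^{1;q}(\overline{\mathcal{C}})$ solving (\ref{theorem2_dirprob}) with boundary data $\varphi^{(j)}$, and each $u^{(j)}$ is $k$-fold symmetric and lies in $C^{1,\mu;q}(\overline{\mathcal{C}})$ for every $\mu\in(0,1/q)$.

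Next I would establish uniform estimates. The monotonicity assumption on $B(Z,P)$ in $Z$ together with the structural bound (\ref{theorem2_structure1}) gives, via the maximum principle applied componentwise on balls in $\mathcal{C}\setminus\{0\}\times\mathbb{R}^{n-2}$, a uniform bound $\sup_{\mathcal{C}}|u^{(j)}| \le C(\sup|\varphi|,\beta_1,\beta_2)$. Because the structure conditions on $A^i$ and $B$ in the corollary are precisely of the type covered by Simon's interior gradient estimate in \cite{Simon_interior}, for every $\Omega' \subset\subset \mathcal{C}$ one obtains $\sup_{\Omega'}|Du^{(j)}| \le C(\Omega',\sup_{\mathcal{C}}|u^{(j)}|,\gamma_1,\gamma_2)$. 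Note that Simon's estimate is a single-valued interior result, so one applies it on each ball in $\mathcal{C}\setminus\mathcal{B}_{\tilde u^{(j)}}$ on which $u^{(j)}$ locally splits into $C^1$ branches; the bound is uniform in the branch because it depends only on the sup of the solution and the geometry. Once $|u^{(j)}|+|Du^{(j)}|$ is bounded on compact subsets of $\mathcal{C}$, the linearized equation has uniformly controlled coefficients, so the $C^{1,\tau;q}$ Schauder estimate (Lemma \ref{schauder_strong}) yields, for each compact $\Omega'\subset\subset\mathcal{C}$ and each $\mu\in(0,1/q)$, a uniform bound $\|u^{(j)}\|_{C^{1,\mu;q}(\Omega')}\le C(\Omega',\mu)$.

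Then I would pass to the limit. By Arzela--Ascoli applied in the $C^{1,\mu;q}$ sense (as spelled out in the Preliminaries), a diagonal subsequence converges in $C^{1;q}$ on compact subsets of $\mathcal{C}$ to some $u \in C^{1;q}(\mathcal{C})$ with $u \in C^{1,\mu;q}_{\rm loc}(\mathcal{C})$ for every $\mu\in(0,1/q)$, and $u$ is $k$-fold symmetric. The weak formulation (\ref{theorem2_dirprob}) against test fields supported in $\mathcal{C}\setminus\{0\}\times\mathbb{R}^{n-2}$ passes to the limit because $A^i$ and $B$ are continuous and we have uniform convergence of $Du^{(j)}$ on the (compact) support of the test field. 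Translating this back to the $q$-valued picture gives $\tilde u \in C^{1,\mu}(\mathcal{C};\mathcal{A}_q(\mathbb{R}))$ with $D_iA^i(D\tilde u)+B(\tilde u,D\tilde u)=0$ weakly in $\mathcal{C}\setminus\mathcal{B}_{\tilde u}$.

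The main obstacle is to show that $u \in C^{0;q}(\overline{\mathcal{C}})$ with $u=\varphi$ on $\partial\mathcal{C}$, since the interior estimates above degenerate at $\partial\mathcal{C}$ and the boundary data is only continuous. Here I would use the geometric structure conditions (in particular $P_i A^i(P)\ge v(P)-\gamma_1$ with $\gamma_1<1$, which gives the mean-curvature-type uniform convexity of the cylinder $\partial B^2_1(0)\times\mathbb{R}^{n-2}$ as needed) to construct, for each point $X_0\in\partial\mathcal{C}$ and each $\varepsilon>0$, single-valued upper and lower barriers $w^{\pm}_{X_0,\varepsilon}$ on $\mathcal{C}$ satisfying $A^iD_iw^{\pm} + B(w^{\pm},Dw^{\pm}) \lessgtr 0$ in the classical sense near $X_0$ and $w^{\pm}_{X_0,\varepsilon}\to \varphi(X_0)$ as $X\to X_0$; this is the standard construction for equations with bounded mean curvature and uniformly convex domain. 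By the comparison principle, each branch of $u^{(j)}$ is trapped between $w^-_{X_0,\varepsilon}+\sup_{\partial\mathcal{C}}|\varphi-\varphi^{(j)}|$-corrected barriers, and letting $j\to\infty$ followed by $\varepsilon\downarrow 0$ gives the desired boundary continuity of $u$. Finally, uniqueness follows from the maximum principle as in the single-valued case. The delicate point throughout is ensuring the barriers and comparison arguments respect the multivalued/$k$-fold-symmetric structure, which is handled by applying them branch-by-branch on $\mathcal{C}\setminus[0,\infty)\times\{0\}\times\mathbb{R}^{n-2}$.
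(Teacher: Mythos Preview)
Your overall strategy---approximate $\varphi$ by smooth $k$-fold symmetric data, apply Theorem \ref{theorem2}, obtain uniform interior $C^{1,\mu;q}$ estimates via a sup bound, an interior gradient estimate, and Schauder, then pass to a limit and handle the boundary by barriers---is exactly the paper's approach. The boundary continuity step via barriers (the paper cites \cite[Theorem 14.15]{GT}) and the compactness/limit argument are fine as you describe them.

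The gap is in your interior gradient estimate. You propose to apply Simon's estimate from \cite{Simon_interior} branch by branch ``on each ball in $\mathcal{C}\setminus\mathcal{B}_{\tilde u^{(j)}}$ on which $u^{(j)}$ locally splits into $C^1$ branches,'' asserting that ``the bound is uniform in the branch because it depends only on the sup of the solution and the geometry.'' But Simon's estimate on a ball $B_\rho(X)$ depends quantitatively on the radius $\rho$, and since $\mathcal{B}_{\tilde u^{(j)}}\subseteq\{0\}\times\mathbb{R}^{n-2}$, any such ball on which you have a single-valued branch must satisfy $\rho\le \op{dist}(X,\{0\}\times\mathbb{R}^{n-2})$. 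Hence the resulting gradient bound blows up as $X$ approaches the axis, and you do \emph{not} obtain $\sup_{\Omega'}|Du^{(j)}|\le C(\Omega')$ for $\Omega'\subset\subset\mathcal{C}$ meeting the axis. Without a gradient bound uniform up to the axis, the subsequent Schauder step (Lemma \ref{schauder_strong}) cannot be applied on balls centered on $\{0\}\times\mathbb{R}^{n-2}$, and the compactness argument collapses there.

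The paper addresses exactly this point: it \emph{extends} Simon's interior gradient estimate to solutions in $C^{1;q}(\mathcal{C})$ by showing that the key integral inequalities in \cite{Simon_interior} (the analogues of (2.1) and (2.11) there) hold for test functions in $C^{0;q}_c(\mathcal{C})\cap W^{1,2;q}_0(\mathcal{C})$, not merely those vanishing near the axis. This is done by first establishing $u\in W^{2,2;q}_{\mathrm{loc}}(\mathcal{C})$ and then using a logarithmic cutoff $\psi_\delta$ about $\{0\}\times\mathbb{R}^{n-2}$ and letting $\delta\downarrow 0$; the logarithmic scaling is what makes the error terms vanish. Once these integral identities hold across the axis, Simon's argument runs verbatim in the $q$-valued setting and yields a genuine interior estimate on any $\Omega'\subset\subset\mathcal{C}$. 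You need this extension (or some substitute for it); the branch-by-branch application does not suffice.

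A minor point: the corollary asserts existence, not uniqueness, and the paper does not claim uniqueness here; your final remark about uniqueness is extraneous.
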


Finally we consider the interior regularity of $q$-valued solutions to elliptic equations: 

\begin{thm} \label{theorem3}
Let $\tilde u \in C^1(B_1(0);\mathcal{A}_q(\mathbb{R}))$ be a $q$-valued function such that $\mathcal{B}_{\tilde u}$ is nonempty, $\mathcal{B}_{\tilde u} \subseteq \{0\} \times B^{n-2}_1(0)$, and $\|\tilde u\|_{C^1(B_1(0))} \leq 1/2$.  Suppose $\tilde u$ is a solution to 
\begin{equation} \label{theorem3_equation0}
	D_i (A^i(X,\tilde u,D\tilde u)) + B(X,\tilde u,D\tilde u) = 0 \text{ weakly in } B_1(0) \setminus \mathcal{B}_{\tilde u}, 
\end{equation}
for some locally real analytic single-valued functions $A^i, B : B_1(0) \times (-1,1) \times B^n_1(0) \rightarrow \mathbb{R}$ and 
\begin{equation} \label{theorem3_ellipticity}
	(D_j A^i)(X,Z,P) \xi_i \xi_j \geq \lambda |\xi|^2 \text{ for } (X,Z,P) \in B_1(0) \times (-1,1) \times B^n_1(0), \, \xi \in \mathbb{R}^n
\end{equation} 
for some constant $\lambda > 0$.  Then $\tilde u(x,y)$ is real analytic in $y$ in the sense that for $B_R(x_0,y_0) \subset \subset B_1(0)$, 
\begin{equation*}
	\sup_{(x,y) \in B_{R/2}(x_0,y_0)} |D_y^{\gamma} \tilde u(x,y)| \leq p! C^p R^{-p} \text{ for } p = |\gamma| \geq 1
\end{equation*}
for some constant $C \in (0,\infty)$ depending on $n$, $q$, $\mu$, $A^i$, $B$, and $\mathcal{B}_{\tilde u}$.  Consequently, the branch set of the graph of $\tilde u$ is a union of $N \leq q/2$ real analytic, $(n-2)$-dimensional submanifolds. 
\end{thm}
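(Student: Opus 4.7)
The strategy is to prove the factorial bound $|D_y^\gamma \tilde u| \leq p!\,C^p R^{-p}$ by induction on $p = |\gamma|$, combining the $C^{1,\mu;q}$ Schauder estimate (Lemma~\ref{schauder_strong}) for the linearized equation satisfied by $D_y^\gamma \tilde u$ with a Friedman-type majorant argument that enforces the sharp constant. Since $\mathcal{B}_{\tilde u} \subseteq \{0\} \times B_1^{n-2}(0)$, the tangential directions $\partial_{y_j}$ respect the branching: representing $\tilde u$ locally by $u = (u_1,\ldots,u_q) \in C^{1;q}(B_R(x_0,y_0))$, the iterated derivative $w_\gamma := D_y^\gamma u$ preserves the cyclic matching at the cut $[0,\infty) \times \{0\} \times \mathbb{R}^{n-2}$ and so again lies in $C^{k;q}$ for all relevant $k$. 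Applying the Fa\`a di Bruno chain rule to (\ref{theorem3_equation0}), $w_\gamma$ weakly solves a linear divergence-form elliptic equation
\[
D_i\bigl(a^{ij}(X)\, D_j w_\gamma + F^i_\gamma(X)\bigr) + G_\gamma(X) = 0 \quad \text{in } B_R(x_0,y_0) \setminus ([0,\infty) \times \{0\} \times \mathbb{R}^{n-2}),
\]
with $a^{ij}(X) = D_{P_j}A^i(X,\tilde u, D\tilde u)$ uniformly elliptic by (\ref{theorem3_ellipticity}) and the smallness $\|\tilde u\|_{C^1} \leq 1/2$, while $F^i_\gamma,\, G_\gamma$ are polynomial expressions in the lower-order derivatives $w_{\gamma'}, Dw_{\gamma'}$ ($|\gamma'| < p$) with analytic coefficient functions of $(X,\tilde u,D\tilde u)$ arising from the Taylor data of $A^i$ and $B$.

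For a shrinking sequence of radii $\rho_p = R/2 + R/(2(p+1))$, so that $\rho_p - \rho_{p+1} \sim R/p^2$, the $C^{1,\mu;q}$ Schauder estimate on the pair $(B_{\rho_p}, B_{\rho_{p+1}})$ controls $\|w_\gamma\|_{C^{1,\mu;q}(B_{\rho_{p+1}})}$ by a combinatorial sum of products $\prod_i \|w_{\gamma_i}\|_{C^{1,\mu;q}(B_{\rho_p})}$ indexed by partitions $\gamma = \gamma_1 + \cdots + \gamma_j$ produced by the chain rule. A naive iteration would yield a $(p!)^2$-type constant; the remedy is to package the recursion into a majorant series. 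Following a modification of Friedman's method, I would introduce a formal power series $\Phi(s) = \sum_{p \geq 0} A_p s^p$ whose coefficients $A_p$ dominate $\sup_{|\gamma|=p} \|w_\gamma\|_{C^{1,\mu;q}(B_{\rho_p})}/p!$. Using the analyticity of $A^i, B$ and the Banach-algebra property of $C^{0,\mu;q}$ on each of the half-spaces $\{\pm x_2 > 0\}$, I would show that $\Phi$ satisfies a scalar functional inequality $\Phi(s) \leq \Psi(s, \Phi(s))$ with $\Psi$ analytic near the origin; the implicit function theorem then furnishes a convergent $\Phi$ on some disk $|s| < R/C$, which translates back to $\|D_y^\gamma \tilde u\|_{C^{1,\mu;q}(B_{R/2})} \leq p!\,C^p R^{-p}$.

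The principal obstacle is exactly this passage from the $C^{1,\mu;q}$ Schauder recursion to a closed scalar majorant: one must verify that Cauchy-product bounds survive the multivalued setting, where products of the $w_{\gamma'}$ are taken sheet-by-sheet on $\{\pm x_2 > 0\}$ while the cyclic identification at $\{x_2 = 0,\, x_1 \geq 0\}$ couples them, and that the inverse factors $(\rho_p - \rho_{p+1})^{-1-\mu} \sim p^{2(1+\mu)} R^{-1-\mu}$ appearing in Schauder are absorbed into the majorant at only geometric (rather than factorial) cost. Once the factorial bound is in hand, $\tilde u$ is real analytic in $y$ throughout $B_{R/2}(x_0,y_0)$. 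For the branch-set consequence, $\mathcal{B}_{\tilde u}$ is parametrized by the $y$-analytic maps $y \mapsto \lim_{x \to 0} u_l(x,y)$ inherited from this analyticity, and these glue into $N \leq q/2$ real analytic $(n-2)$-dimensional submanifolds because at least two sheets of a $C^{1,\mu}$ $q$-valued function must coincide at every branch point.
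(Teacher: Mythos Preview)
Your overall architecture---differentiate \eqref{theorem3_equation0} in the $y$-directions, apply a $C^{1,\mu;q}$ Schauder estimate to the resulting linear equation for $D_y^\gamma u$, and close the recursion by a Friedman-type majorant---is exactly what the paper does. Two points, however, separate your sketch from a working proof.

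\medskip

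\emph{The choice of radii.} Your nested sequence $\rho_p = R/2 + R/(2(p+1))$ has gaps $\rho_p - \rho_{p+1} \sim R/p^2$, so each application of Schauder costs a factor $\bigl(\rho_p - \rho_{p+1}\bigr)^{-1-\mu} \sim p^{2(1+\mu)} R^{-1-\mu}$. You flag this as the ``principal obstacle'' and hope it is absorbed at geometric cost, but it is not: a recursion $A_p \lesssim p^{2(1+\mu)} \sum A_{p_1}\cdots A_{p_j}$ with $\sum p_i = p$ forces super-factorial growth of $p!\,A_p$, and no implicit-function argument on the majorant will rescue this. The paper's device is to abandon a fixed nested sequence altogether. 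The inductive hypothesis (Lemma~\ref{analyticity_lemma}) is stated for \emph{every} sub-ball $B_R(X_1) \subseteq B_{R_0}(X_0)$, and at stage $p$ the Schauder estimate is applied on the pair $B_{R/2p}(X_1) \subset B_{R/p}(X_1)$, whose gap is $R/2p$ rather than $R/p^2$. The lower-order data on $B_{R/p}(X_1)$ are then recovered from the hypothesis at scale $s$ on balls $B_{(p-1)R/2ps}(X)$ with $X$ ranging over $B_{R/p}(X_1)$ (equations \eqref{analyticity_lemma_eqn2}--\eqref{analyticity_lemma_eqn4}); the factor $\bigl(p/(p-1)\bigr)^s \leq e$ incurred here is harmless. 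With this scaling the Schauder loss is only $p^{1+\mu}$, which the majorant calculus in Lemmas~\ref{bound_g_lemma} and~\ref{holder_f_lemma} absorbs into the single geometric constant $H$.

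\medskip

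\emph{Minor correction.} The relevant Schauder estimate is the divergence-form Lemma~\ref{schauder_div}, not Lemma~\ref{schauder_strong}; the latter is a non-divergence $C^{1,\tau}$ estimate tailored to the Leray--Schauder argument in Section~\ref{sec:nonlineareqns} and requires $a^{ij} D_{ij} u = f$, which is not the form of \eqref{theorem3_diffeqn}.
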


In particular, (\ref{theorem3}) establishes that the branch sets of the minimal hypersurfaces constructed in~\cite{SW1} are locally real analytic $(n-2)$-dimensional submanifolds.  

Suppose $\tilde u$ is as in the statement of Theorem \ref{theorem3}.  Then $\tilde u = \{u_1,u_2,\ldots,u_q\}$ on $B_1(0) \setminus (0,\infty) \times \{0\} \times \mathbb{R}^{n-2}$ for some $C^1$ single-valued $u_1, u_2,\ldots, u_q$ such that $D_i A^i(X,u_l,Du_l) + B(X,u_l,Du_l) = 0$ weakly in $B_1(0) \setminus [0,\infty) \times \{0\} \times \mathbb{R}^{n-2}$ and $\tilde u = \{v_1,v_2,\ldots,v_q\}$ on $B_1(0) \setminus (-\infty,0) \times \{0\} \times \mathbb{R}^{n-2}$ for some $C^1$ single-valued $v_1,v_2,\ldots,v_q$ such that $D_i A^i(X,v_l,Dv_l) + B(X,v_l,Dv_l) = 0$ weakly in $B_1(0) \setminus (-\infty,0] \times \{0\} \times \mathbb{R}^{n-2}$.  By unique continuation, we can order $v_1,v_2,\ldots,v_q$ so that $u_l = v_l$ on $B_1(0) \cap \mathbb{R} \times (-\infty,0) \times \mathbb{R}^{n-2}$.  Moreover, there exists a permutation $\sigma$ of $\{1,2,\ldots,q\}$ such that $u_{\sigma(l)} = v_l$ on $B_1(0) \cap \mathbb{R} \times (0,\infty) \times \mathbb{R}^{n-2}$.  After reordering $u_1,u_2,\ldots,u_q$, we may assume that $\sigma = (1,2,\ldots,i_1) (i_1+1,i_1+2,\ldots,i_2) \cdots (i_{N-1},i_{N-1}+1,\ldots,q)$ for some integers $i_j$ so that $(u_{i_{j-1}+1},u_{i_{j-1}+2},\ldots,u_{i_j}) \in C^{1;i_j-i_{j-1}}(B_1(0))$ for $j = 1,2,\ldots,N$, where $i_0 = 0$ and $i_N = q$.  To prove Theorem \ref{theorem3}, it suffices to assume $N = 1$ so that $u = (u_1,u_2,\ldots,u_q) \in C^{1;q}(B_1(0))$ and show that 
\begin{equation} \label{theorem3_conclusion}
	\sup_{(x,y) \in B_{R/2}(x_0,y_0)} |D_y^{\gamma} u(x,y)| \leq p! C^p R^{-p} \text{ for } p = |\gamma| \geq 1
\end{equation}
for some constant $C \in (0,\infty)$ depending on $n$, $q$, $\mu$, $A^i$, and $B$.  The branch set of the graph of $\tilde u$ is $\{ (0,y,u_1(0,y)) : y \in B^{n-2}_1(0) \}$, which is real analytic if $u$ satisfies (\ref{theorem3_conclusion}). 

We can regard Theorem \ref{theorem3} as analogous to the result that single-valued solutions to (\ref{theorem3_equation0}) are real analytic.  One approach to proving such theorems for single-valued functions due to Morrey (see~\cite[Sections 5.8 and 6.7]{Morrey} or~\cite{Morrey2}) is to use integral kernels to show the single-valued solution extends to a holomorphic function on some domain in $\mathbb{C}^n$.  However, we cannot use integral kernels for $q$-valued functions, so instead we take another approach of inductively using Schauder estimates.  To prove Theorem \ref{theorem3}, we first show that $D_y^{\gamma} u \in C^{1,\mu;q}(B_1(0))$ for all $\gamma$ by an inductive argument involving difference quotients and Schauder estimates.  For Theorem \ref{theorem3} we need estimates on $D_y^{\gamma} u$ of the particular form (\ref{theorem3_conclusion}), which requires obtaining precise estimates on terms appearing in the Schauder estimates using a modified version of a technique used by Friedman in~\cite{Friedman} involving majorants.

By replacing a Schauder estimate for equations (Lemma \ref{schauder_div} in Section~\ref{sec:elliptictheory}) with a Schauder estimate for elliptic systems (Lemma \ref{schauder_system} in Section~\ref{sec:elliptictheory}), we obtain a similar result for elliptic systems:

\begin{thm} \label{theorem3_systems}
Let $\mu \in (0,1/q)$.  There is a $\varepsilon = \varepsilon(n,m,\mu,\nu) > 0$ such that the following is true.  Let $\tilde u \in C^{1,\mu}(B_1(0);\mathcal{A}_q(\mathbb{R}^m))$, where $\mu \in (0,1/q)$, such that $\mathcal{B}_{\tilde u}$ is nonempty, $\mathcal{B}_{\tilde u} \subseteq \{0\} \times B^{n-2}_1(0)$, and $\|\tilde u\|_{C^1(B_1(0))} \leq 1/2$.  Suppose $\tilde u$ is a solution to the non-linear elliptic differential equation 
\begin{equation*}
	D_i A^i_{\kappa}(X,\tilde u_l,D\tilde u_l) + B_{\kappa}(X,\tilde u_l,D\tilde u_l) = 0 \text{ weakly in } B_1(0) \setminus \mathcal{B}_{\tilde u}, 
\end{equation*}
where $A^i_{\kappa}, B_{\kappa} : B_1(0) \times B^m_1(0) \times B^{mn}_1(0) \rightarrow \mathbb{R}$ are locally real analytic single-valued functions such that 
\begin{equation*} 
	|(D_{P^{\lambda}_j} A^i_{\kappa})(X,Z,P) - \delta^{ij} \delta_{\kappa \lambda}| < \varepsilon 
	\text{ for } (X,Z,P) \in B_1(0) \times B^m_1(0) \times B^{mn}_1(0). 
\end{equation*} 
Then $\tilde u(x,y)$ is real analytic in $y$ in the sense that for $B_R(x_0,y_0) \subset \subset B_1(0)$, 
\begin{equation*}
	\sup_{(x,y) \in B_{R/2}(x_0,y_0)} |D_y^{\gamma} \tilde u(x,y)| \leq p! C^p R^{-p} \text{ for } p = |\gamma| \geq 1, 
\end{equation*}
for some constant $C \in (0,\infty)$ depending on $n$, $q$, $\mu$, $A^i$, $B$, and $\mathcal{B}_{\tilde u}$.  Consequently, the branch set of the graph of $\tilde u$ is a union of at most $q/2$ real analytic, $(n-2)$-dimensional submanifolds. 
\end{thm}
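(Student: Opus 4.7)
The plan is to follow the argument for Theorem~\ref{theorem3} nearly verbatim, with the scalar divergence-form Schauder estimate (Lemma~\ref{schauder_div}) replaced throughout by the systems Schauder estimate (Lemma~\ref{schauder_system}); the smallness hypothesis on $D_{P^\lambda_j}A^i_\kappa-\delta^{ij}\delta_{\kappa\lambda}$ is exactly what is needed to put every linearised equation that arises into the form for which Lemma~\ref{schauder_system} applies.  As in the discussion following Theorem~\ref{theorem3}, I would first reduce to the case $N=1$, in which $\tilde u$ lifts to a single $u=(u_1,\ldots,u_q)\in C^{1;q}(B_1(0);\mathbb{R}^m)$ satisfying the system on $B_1(0)\setminus [0,\infty)\times\{0\}\times\mathbb{R}^{n-2}$, so that it is enough to prove the componentwise estimate $\sup_{B_{R/2}(x_0,y_0)}|D_y^\gamma u|\leq p!\,C^p R^{-p}$.

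The first main step is to prove by induction on $|\gamma|$ that $D_y^\gamma u\in C^{1,\mu;q}(B_{R'}(x_0,y_0);\mathbb{R}^m)$ for every $R'<R$ and every multi-index $\gamma$ in the $y$-variables.  Starting from the weak equation, one takes difference quotients $\delta_{h,\eta}$ in the $y$-directions (which preserve the class $C^{1;q}$), writes the resulting linear system as
\begin{equation*}
D_i\bigl(a^{ij}_{\kappa\lambda}(X)\,D_j\delta_{h,\eta}u_l^\lambda\bigr)+\text{lower order}=D_i f^i_\kappa+g_\kappa,
\end{equation*}
where $a^{ij}_{\kappa\lambda}$ is an average of $(D_{P^\lambda_j}A^i_\kappa)(X,u_l,Du_l)$, and the smallness of $\|a^{ij}_{\kappa\lambda}-\delta^{ij}\delta_{\kappa\lambda}\|_{L^\infty}$ (controlled by $\varepsilon$ since $\|\tilde u\|_{C^1}\leq 1/2$) puts the system in the regime of Lemma~\ref{schauder_system}.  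Applying that Schauder estimate to $\delta_{h,\eta}u$ on concentric balls and sending $h\to 0$ yields $D_\eta u\in C^{1,\mu;q}$, together with a quantitative bound; iterating the same argument, applied to the system satisfied by $D_y^\gamma u$, inductively produces the membership $D_y^\gamma u\in C^{1,\mu;q}$ and a Schauder bound on each $D_y^\gamma u$ on successively slightly smaller balls.

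The second and more delicate step is to control the growth of these Schauder constants in $|\gamma|$ so as to obtain the factorial bound.  Differentiating the system $|\gamma|$ times in $y$ produces a linearised equation for $D_y^\gamma u$ with a right-hand side that is a polynomial in $D^\beta u$ for $|\beta|\leq|\gamma|$, weighted by derivatives of the analytic functions $A^i_\kappa,B_\kappa$; the Schauder estimate on nested balls of radii $R_p=R/2+R\,2^{-p-1}$ then feeds these derivatives back into itself.  The core combinatorial task is to convert this into a recursion $M_p\leq C(M_{p-1}+\sum_{j}\binom{p}{j}M_jM_{p-j}+\cdots)$ for $M_p=\sup|D_y^p u|/(p!\,R^{-p})$ and to show $\sup_p M_p<\infty$.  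This is precisely the point at which I would adapt the majorant technique of Friedman~\cite{Friedman}, as used in the proof of Theorem~\ref{theorem3}: one introduces a majorant series $\Phi(t)=\sum a_p t^p$ dominating the coefficients of the analytic data, shows $\sum M_p t^p$ is majorised by a solution of a fixed scalar functional equation involving $\Phi$, and then extracts convergence from the implicit function theorem for that scalar equation.

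The main obstacle I anticipate is combinatorial bookkeeping in this last step: the nonlinear right-hand side in the system case involves products of the vector-valued $D^\beta u$ and their derivatives $D^{\beta'}A^i_\kappa$ evaluated at $(X,u_l,Du_l)$, so the Fa\`{a} di Bruno expansion is more involved than in the scalar case of Theorem~\ref{theorem3}.  Provided one carefully tracks the dependence of the Schauder constants in Lemma~\ref{schauder_system} on the ellipticity parameter (uniform under the smallness assumption) and uses the analyticity of $A^i_\kappa,B_\kappa$ to majorise all derivative norms by a single geometric series, the Friedman recursion closes in exactly the same way, and the factorial estimate $\sup_{B_{R/2}(x_0,y_0)}|D_y^\gamma u|\leq p!\,C^p R^{-p}$ follows.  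The real analyticity of the branch set $\{(0,y,u_1(0,y)):(0,y)\in B_1(0)\}$ (and the decomposition into at most $q/2$ analytic $(n-2)$-submanifolds after undoing the reduction $N=1$) is then immediate, as in Theorem~\ref{theorem3}.
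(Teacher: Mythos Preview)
Your proposal is correct and follows essentially the same approach as the paper, which does not give a separate proof of Theorem~\ref{theorem3_systems} at all but simply remarks that one replaces Lemma~\ref{schauder_div} by Lemma~\ref{schauder_system} throughout the proof of Theorem~\ref{theorem3}.  Your identification of the role of the smallness hypothesis (to place every linearised system in the regime of Lemma~\ref{schauder_system}), the reduction to $N=1$, the difference-quotient argument for smoothness in $y$, and the Friedman majorant technique for the factorial bounds all match the paper's scheme.

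Two minor presentational differences are worth noting.  First, your iteration on nested balls $R_p=R/2+R\,2^{-p-1}$ differs from the paper's, which at stage $p$ works on balls $B_{R/p}(X_1)$ and $B_{R/2p}(X_1)$ for \emph{all} $B_R(X_1)\subseteq B_{R_0}(X_0)$ (see Lemma~\ref{analyticity_lemma}); the paper's choice makes the inductive hypothesis self-similar under translation and avoids tracking an exponentially shrinking gap.  Second, your description of the Friedman step via a majorant series and an implicit function theorem is more abstract than the paper's implementation, which writes down explicit majorant functions $\Psi(\xi,\tau,\zeta)$ and $v(\xi,\tau)$ (with an auxiliary parameter $\tau$ whose $\partial_\tau$ encodes H\"older seminorms) and computes the needed inequalities directly (Lemmas~\ref{bound_g_lemma} and~\ref{holder_f_lemma}).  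Both routes work; the paper's explicit computation makes the dependence of $H$ on $n,K_0,K,H_0$ transparent, while your formulation is closer to Friedman's original presentation.
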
 

In particular, (\ref{theorem3}) establishes that the branch sets of the minimal submanifolds constructed in Theorem \ref{theorem1} are locally real analytic $(n-2)$-dimensional submanifolds provided $\varepsilon$ is sufficiently small.


\section{Elliptic theory for multivalued functions} \label{sec:elliptictheory}

The proof of the main results use standard theorems for elliptic differential equation such as the maximum principle and the Schauder estimates.  This chapter is concerned with extending those theorems to solutions in the spaces $C^{k;q}$ and $W^{1,2;q}$ discussed in Section~\ref{sec:preliminaries}.  We first consider differential equations of the form 
\begin{equation*}
	a_l^{ij} D_{ij} u_l + b_l^i D_i u_l + c_l u_l = (\geq , \leq) \, f_l \text{ in } \Omega \setminus [0,\infty) \times \{0\} \times \mathbb{R}^{n-2} 
\end{equation*}
for $l = 1,2,\ldots,q$, where $\Omega$ is an open set in $\mathbb{R}^n$, $u = (u_1,u_2,\ldots,u_q) \in C^{2;q}(\Omega \setminus \{0\} \times \mathbb{R}^{n-2})$, and $a^{ij} = (a^{ij}_1,a^{ij}_2,\ldots,a^{ij}_q), b^i = (b^i_1,b^i_2,\ldots,b^i_q), c = (c_1,c_2,\ldots,c_q), f = (f_1,f_2,\ldots,f_q) \in C^{0;q}(\Omega)$.  We assume the ellipticity condition 
\begin{equation} \label{ellipticity}
	a_l^{ij}(X) \xi_i \xi_j \geq \lambda |\xi|^2 \text{ for } X \in \Omega, \, \xi \in \mathbb{R}^n, \, l = 1,2,\ldots,q
\end{equation}
for some constant $\lambda > 0$.  Given $(x,y) \in \Omega$, if $x_1 \leq 0$ then each $u_l$ solves $a_l^{ij} D_{ij} u_l + b_l^i D_i u_l + c_l u_l = (\geq , \leq) \, f_l$ in $B_{|x|/2}(x,y) \cap \Omega$.  Similarly if $x_1 > 0$ and $\hat u_l$ are the $C^2$ single-valued functions defined by $\hat u_l = u_l$ on $B_{|x|/2}(x,y) \cap \Omega \cap \mathbb{R} \times (0,\infty) \times \mathbb{R}^{n-2}$ for $l = 1,2,\ldots,q$, $\hat u_1 = u_q$ on $B_{|x|/2}(x,y) \cap \Omega \cap \mathbb{R} \times (-\infty,0) \times \mathbb{R}^{n-2}$, $\hat u_l = u_{l-1}$ on $B_{|x|/2}(x,y) \cap \Omega \cap \mathbb{R} \times (-\infty,0) \times \mathbb{R}^{n-2}$ for $l = 2,3,\ldots,q$, then each $\hat u_l$ satisfies an elliptic differential equation on $B_{|x|/2}(x,y) \cap \Omega$.  Thus $u$ satisfies standard elliptic estimates on $B_{|x|/2}(x,y) \cap \Omega$.  Our first result is a strong maximum principle:

\begin{lemma} \label{maxp_strong}
Let $u \in C^{0;q}(\overline{\mathcal{C}}) \cap C^{1;q}(\mathcal{C}) \cap C^{2;q}(\mathcal{C} \setminus \{0\} \times \mathbb{R}^{n-2})$, $a^{ij},$ $b^i, c \in C^{0;q}(\mathcal{C})$ satisfy 
\begin{equation} \label{maxp_strong_eqn1}
	a_l^{ij} D_{ij} u_l + b_l^i D_i u_l + c_l u_l \geq 0 \text{ in } \mathcal{C} \setminus [0,1) \times \{0\} \times \mathbb{R}^{n-2} 
\end{equation}
for $l = 1,2,\ldots,q$.  Assume (\ref{ellipticity}) holds true for some constant $\lambda > 0$ and $c_l \leq 0$ in $\mathcal{C} \setminus [0,1) \times \{0\} \times \mathbb{R}^{n-2}$ for $l = 1,2,\ldots,q$.  Then $u$ does not attain its maximum value in the interior of $\mathcal{C}$ unless $u_l$ all equal the same constant function. 
\end{lemma}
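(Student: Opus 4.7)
The plan is to reduce the lemma to the classical Hopf strong maximum principle on balls avoiding the axis $\{0\} \times \mathbb{R}^{n-2}$, then propagate via connectedness and the cyclic $C^{1;q}$ matching at the slit. Set $v(X) := \max_{1 \le l \le q} u_l(X)$ on $\mathcal{C} \setminus [0,1)\times\{0\}\times\mathbb{R}^{n-2}$; since the cyclic permutation across the slit preserves the unordered multiset $\tilde u(X)$, $v$ extends continuously to all of $\mathcal{C}$. Let $M = \sup_{\mathcal{C}} u$ and suppose $v$ attains $M$ at some interior point. Without loss of generality $M \geq 0$, so that the constant $M$ is an admissible supersolution under $c_l \leq 0$. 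Let $T := v^{-1}(M)$, a nonempty closed subset of $\mathcal{C}$; the goal is to show $T = \mathcal{C}$ and then $u_l \equiv M$ for every $l$.

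For $X_0 \in T$ with $X_0 \notin \{0\} \times \mathbb{R}^{n-2}$, take a ball $B$ about $X_0$ inside $\mathcal{C} \setminus \{0\} \times \mathbb{R}^{n-2}$. If $B$ also avoids the slit, then $u_1, \ldots, u_q$ are single-valued $C^2$ functions on $B$ and some $u_{l_0}(X_0) = M$; the classical Hopf strong maximum principle applied to $u_{l_0}$ gives $u_{l_0} \equiv M$ on $B$, hence $v \equiv M$ on $B$. If $B$ meets the slit but not the axis, the same argument applied to the single-valued wrappings $\hat u_l$ introduced just before the lemma gives the same conclusion. Therefore $T \cap (\mathcal{C} \setminus \{0\} \times \mathbb{R}^{n-2})$ is open; it is also closed in $\mathcal{C} \setminus \{0\} \times \mathbb{R}^{n-2}$ by continuity, and since the latter set is connected it is either empty or all of $\mathcal{C} \setminus \{0\} \times \mathbb{R}^{n-2}$. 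Once it equals the latter, continuity yields $v \equiv M$ on $\mathcal{C}$; to upgrade to $u_l \equiv M$ for every $l$, I would observe that the sets $E_l := u_l^{-1}(M) \subset \mathcal{C} \setminus [0,1)\times\{0\}\times\mathbb{R}^{n-2}$ are open (by the classical Hopf applied to $u_l$) and closed, and their union is all of $\mathcal{C} \setminus [0,1)\times\{0\}\times\mathbb{R}^{n-2}$ (since $v \equiv M$); connectedness of this set then forces some $u_{l_0} \equiv M$. The cyclic $C^{1;q}$ matching from Definition \ref{defnCq} makes $u_{l_0 \pm 1}$ tend to $M$ at the slit from one side with vanishing one-sided normal derivative, so the Hopf boundary point lemma forces $u_{l_0 \pm 1} \equiv M$ on that half of $\mathcal{C} \setminus $ slit, and a further application of the classical strong maximum principle on the simply-connected $\mathcal{C} \setminus $ slit extends this to all of that domain; iterating around the cycle yields $u_l \equiv M$ for every $l$.

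The main obstacle is ruling out the case where the interior maximum of $v$ is attained only on the axis $\{0\} \times \mathbb{R}^{n-2}$. The plan is to exploit the $C^{1;q}$ regularity at the axis: if $X_0 = (0, y_0) \in T$, then $\tilde u(X_0) = \{M, \ldots, M\}$ together with $u_l \leq M$ along all admissible directions of approach forces $D u_l(X_0) = 0$ for each $l$. A Hopf-type barrier argument on a small ball transverse to the axis, or a passage to the $q$-fold cover $\xi_1 + i \xi_2 = (x_1 + i x_2)^{1/q}$ in which $\tilde u$ lifts to a genuinely single-valued function with an interior maximum at $\xi = 0$, should then contradict the assumption that $v < M$ off the axis. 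Making this step precise is the delicate part, because the axis has codimension two in $\mathcal{C}$, so the Hopf boundary point lemma does not apply directly along its normal directions, and the individual sheets $u_l$ are not even defined at axis points; a careful combination of the first-order vanishing, the ellipticity condition (\ref{ellipticity}), and the cyclic matching is needed to close the argument.
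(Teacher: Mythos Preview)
Your overall structure is right, and your handling of the off-axis case via the classical strong maximum principle and connectedness is correct (if somewhat more elaborate than necessary). The difficulty you flag at the axis, however, is not a genuine obstacle, and the paper dispatches it in one line.

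You correctly observe that if the maximum $M$ is attained at $(0,y_0)$ then $Du_1(0,y_0)=0$. Your worry is that the axis has codimension two, so the Hopf boundary point lemma ``does not apply directly along its normal directions.'' But you do not need to treat the axis as a boundary of $\mathcal{C}\setminus\{0\}\times\mathbb{R}^{n-2}$. Instead, take the ordinary Euclidean ball $B_{1/4}(0,1/4,y_0)\subset\mathcal{C}$, which lies entirely in the half-space $\{x_2>0\}$ and has $(0,y_0)$ as a single smooth boundary point satisfying the interior sphere condition. On this ball $u_1$ is a genuine single-valued $C^2$ subsolution (the ball avoids both the slit and the axis in its interior), it extends $C^1$ to the closed ball by the $C^{1;q}$ hypothesis, and by the off-axis step it satisfies $u_1<M$ in the interior while $u_1(0,y_0)=M$. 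The Hopf lemma~\cite[Lemma 3.4]{GT} then gives $\partial u_1/\partial\nu(0,y_0)>0$ for the outward normal $\nu=-e_2$, contradicting $Du_1(0,y_0)=0$. No passage to the $q$-fold cover or special barrier is needed; the codimension of the axis is irrelevant once you work on a ball that touches it at a single point.

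So your proposal is incomplete rather than wrong: you identified the right ingredients for the axis case but did not see the simple choice of comparison domain that makes Hopf apply verbatim. The paper's proof is essentially two sentences for this reason.
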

\begin{proof}
Assume $u_l$ do not all equal the same constant function.  By the strong maximum principle~\cite[Theorem 3.5]{GT} applied locally in $\mathcal{C} \setminus \{0\} \times \mathbb{R}^{n-2}$, $u$ does not attain its maximum value in $\mathcal{C} \setminus \{0\} \times \mathbb{R}^{n-2}$.  Suppose $u$ attained its maximum value at $(0,y_0)$ for some $y_0 \in \mathbb{R}^{n-2}$.  Then $u_1$ extends to a $C^1$ function on $\overline{B_{1/4}(0,1/4,y_0)}$ that attains its maximum value at $(0,y_0)$, $Du_1(0,y_0) = 0$, and satisfies (\ref{maxp_strong_eqn1}), contradicting the Hopf boundary point lemma~\cite[Lemma 3.4]{GT}. 
\end{proof}

Next we prove a Schauder estimate that will be needed for the proof of Theorem \ref{theorem2}. 

\begin{lemma} \label{schauder_strong}
Let $0 < \mu < \tau < 1/q$ and $B_R(X_0) \subseteq \mathbb{R}^n$.  Suppose $u \in C^{1,\tau;q}(\overline{B_R(X_0)})$, $a^{ij}, f \in C^{0,\mu;q}(\overline{B_R(X_0)})$ satisfy 
\begin{equation} \label{schauder_strong_eqn1}
	a_l^{ij} D_{ij} u_l = f_l \text{ in } B_R(X_0) \setminus \{0\} \times \mathbb{R}^{n-2}.  
\end{equation}
Assume (\ref{ellipticity}) holds true for some constant $\lambda > 0$ and $\|a^{ij}\|'_{C^{0,\mu;q}(B_R(X_0))} \leq \Lambda$ for some constant $\Lambda > 0$.  Then 
\begin{equation} \label{schauder_strong_eqn2}
	\|u\|'_{C^{1,\tau;q}(B_{R/2}(X_0))} \leq C \left( R^{-n/2} \|u\|_{L^{2;q}(B_R(X_0))} + R^2 \|f\|'_{C^{0,\mu;q}(B_R(X_0))} \right)
\end{equation}
for some constant $C = C(n,q,\mu,\tau,\lambda,\Lambda) \in (0,\infty)$.
\end{lemma}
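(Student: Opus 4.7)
My plan is to rescale to $R=1$, $X_0=0$ and prove
\[
\|u\|_{C^{1,\tau;q}(B_{1/2})} \leq C M \quad \text{where}\quad M := \|u\|_{L^{2;q}(B_1)} + \|f\|_{C^{0,\mu;q}(B_1)}.
\]
The structural observation driving everything is that on any ball $B_\rho(X) \subset B_1$ disjoint from the axis $\{0\} \times \mathbb{R}^{n-2}$, the cyclic matching conditions in the definition of $C^{k;q}$ identify $u$ with a genuinely single-valued $C^{1,\tau}$ function satisfying the ordinary non-divergence uniformly elliptic equation $a^{ij} D_{ij} u = f$ (one glues across the cut $[0,\infty)\times\{0\}\times\mathbb{R}^{n-2}$ using the permutation condition). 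This makes the full strength of classical single-valued elliptic theory available locally on all such balls.

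First I would establish $\sup_{B_{3/4}}|u| \leq C M$ via the standard local $L^2 \to L^\infty$ bound (Moser iteration or Alexandrov--Bakelman--Pucci) applied within each half-ball of $B_1$ on which $u$ is single-valued. Then, setting $d(X) = \operatorname{dist}(X,\{0\}\times\mathbb{R}^{n-2})$, the dimensionless interior $C^{2,\mu}$ Schauder estimate on $B_{d(X)/2}(X)$ gives the pointwise bounds
\[
d(X)|Du(X)| + d(X)^2|D^2u(X)| + d(X)^{2+\mu}[D^2u]_{\mu;B_{d(X)/4}(X)} \leq C M
\]
for every $X \in B_{3/4}$ with $d(X)>0$.

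To assemble these into the global $C^{1,\tau;q}$ seminorm, I take $X_1, X_2$ on the same side of the axis and set $\delta = |X_1-X_2|$ and $r = \min(d(X_1),d(X_2))$. In the short-range regime $\delta \leq r/16$ both points lie in a common local Schauder ball, so integrating the pointwise $D^2u$ bound gives $|Du(X_1)-Du(X_2)| \leq Cr^{-2} M \delta$; this is traded against the complementary bound $|Du(X_1)-Du(X_2)| \leq 2\sup|Du|$ by interpolation to produce a controlled Hölder quotient. In the long-range regime $\delta > r/16$ both points sit near the axis, and I would compare $Du(X_i)$ to its axis limit value using the a priori $C^{1,\tau;q}$ regularity on each half-space, combined with the pointwise Schauder bounds for the interior contribution.

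The main obstacle will be turning these degenerate near-axis estimates into a uniform $\tau$-Hölder bound, since the individual local Schauder bounds blow up like negative powers of $d(X)$ and the hypothesis $\tau < 1/q$ must be exploited decisively. I expect the cleanest route is a blow-up / compactness argument: negating (\ref{schauder_strong_eqn2}) and rescaling around a sequence of worst pairs of points (after subtracting affine pieces) would produce, via the Arzelà--Ascoli convergence recorded in Section~\ref{sec:preliminaries}, a non-trivial limit $v^{(\infty)} \in C^{1,\tau;q}$ solving a constant-coefficient homogeneous uniformly elliptic equation with vanishing $L^2$ data --- on all of $\mathbb{R}^n$ or on $\mathbb{R}^n \setminus \{0\}\times\mathbb{R}^{n-2}$ depending on whether the blow-up pair concentrates away from or towards the axis --- with $[Dv^{(\infty)}]_{\tau;q}$ bounded below. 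A Liouville-type classification of such homogeneous $q$-valued solutions, in which the restriction $\tau < 1/q$ excludes the non-trivial homogeneous modes of order $r^{k/q}$ for $k\leq q$, would then force $v^{(\infty)} \equiv 0$ and yield the desired contradiction.
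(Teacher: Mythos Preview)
Your blow-up strategy is exactly the paper's approach, and your identification of the dichotomy (blow-up away from versus toward the axis) and of the role of the constraint $\tau<1/q$ in the Liouville step is correct. The Liouville-type classification you invoke is the content of Lemma~\ref{gap_lemma}, which the paper proves separately via the frequency-function machinery of \cite{SW2}; you should be aware that this is a substantial ingredient and not an immediate consequence of the single-valued theory.

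There is, however, a genuine technical gap in how you set up the contradiction. If you negate (\ref{schauder_strong_eqn2}) directly and normalize $[Du_k]_{\tau;q,B_{1/2}}=1$, you have no a priori control on $[Du_k]_{\tau;q,B_1}$, so after rescaling by $\rho_k$ about the worst pair $Y_k,Y_k'\in B_{1/2}$ you cannot guarantee that $[D\hat u_k]_{\tau;q,B_R(0)}$ stays bounded as $R\to\infty$ --- the rescaled ball $B_{\rho_k R}(Y_k)$ may leave $B_{1/2}$. The paper resolves this in the standard way: it first proves the absorption-type inequality
\[
[Du]_{\tau;q,B_{1/2}(X_0)} \leq \delta\,[Du]_{\tau;q,B_1(X_0)} + C_\delta\bigl(\sup_{B_1}|u|+\sup_{B_1}|Du|+\|f\|_{C^{0,\mu;q}(B_1)}\bigr)
\]
for every $\delta>0$. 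Negating \emph{this} gives a sequence with $[Du_k]_{\tau;q,B_1}\leq \delta^{-1}[Du_k]_{\tau;q,B_{1/2}}$, which is exactly the uniform bound on the full ball needed for Arzel\`a--Ascoli compactness of the rescaled sequence on all of $\mathbb{R}^n$. The full estimate (\ref{schauder_strong_eqn2}) then follows from the absorption inequality by the usual scaling and interpolation argument. Your preliminary attempt via weighted interior Schauder estimates and case analysis on $\delta$ versus $r$ will not close without this device, as you yourself suspected.
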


The proof of Lemma \ref{schauder_strong} extending Liouville-type result~\cite[Corollary 2.6]{SW2}. 

\begin{lemma} \label{gap_lemma}
Let $\mu \in (0,1/q)$ and suppose $u \in C^{1,\mu;q}(\mathbb{R}^n) \cap C^{\infty;q}(\mathbb{R}^n)$ such that 
\begin{equation} \label{gap_lemma_eqn1}
	\Delta u_l = 0 \text{ in } \mathbb{R}^n \setminus [0,\infty) \times \mathbb{R}^{n-2}
\end{equation}
and $[Du]_{\mu;q,\mathbb{R}^n} < \infty$.  Then $u_l(X) = a + b \cdot X$ for all $X \in \mathbb{R}^n \setminus [0,\infty) \times \{0\} \times \mathbb{R}^{n-2}$ and $l = 1,2,\ldots,q$ for some $a \in \mathbb{R}$ and $b \in \mathbb{R}^n$ independent of $l$. 
\end{lemma}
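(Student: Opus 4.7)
The plan is to split $u = u_a + u_f$ with $u_a := \tfrac{1}{q}\sum_{l=1}^q u_l$ the single-valued average and $u_f := u - u_a$ the average-free part, and treat the two summands separately.  The average $u_a \in C^{1,\mu}(\mathbb{R}^n)$ is a single-valued harmonic function with $[Du_a]_{\mu,\mathbb{R}^n} \leq [Du]_{\mu;q,\mathbb{R}^n} < \infty$, so $Du_a$ is itself a single-valued harmonic function on $\mathbb{R}^n$ of polynomial growth $|Du_a(X)| \leq |Du_a(0)| + [Du_a]_\mu|X|^\mu$ with $\mu < 1$.  The classical Liouville theorem for harmonic functions forces $Du_a$ to be constant, hence $u_a(X) = a + b\cdot X$ for some $a \in \mathbb{R}$, $b \in \mathbb{R}^n$.

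For $u_f$ the plan is to show $u_f \equiv 0$ by lifting to a single-valued function.  Using the change of variables $\xi_1+i\xi_2 = (x_1+ix_2)^{1/q}$ from the proof of Theorem~\ref{poisson_thm}, define $w(\xi,y) := u_{f,l}(\xi^q,y)$ for $\arg \xi \in (2\pi(l-1)/q,\, 2\pi l/q)$, extended continuously across the $q$ rays by the cyclic matching built into $C^{\infty;q}(\mathbb{R}^n)$.  Then $w$ is smooth on $\mathbb{R}^n$, is $2\pi$-periodic in $\phi := \arg\xi$, its $\phi$-Fourier modes $\hat w_k$ vanish whenever $q \mid k$ (equivalently $\sum_l u_{f,l} \equiv 0$), and a chain-rule calculation from $\Delta u_{f,l} = 0$ shows
\begin{equation*}
    Lw := \Delta_\xi w + q^2|\xi|^{2(q-1)}\Delta_y w = 0 \text{ on } \mathbb{R}^n.
\end{equation*}
The global bound $[Du_f]_{\mu;q,\mathbb{R}^n} < \infty$ yields the growth estimates $|w(\xi,y)| \le C(1 + (|\xi|^q + |y|)^{1+\mu})$ and $|D_y w(\xi,y)| \le C(1 + |\xi|^{q\mu} + |y|^\mu)$.

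Next, use the structure of $L$ to reduce $w$ to a harmonic polynomial of specific form.  Taking the distributional Fourier transform of $Lw = 0$ in $y$ gives $\Delta_\xi \hat w(\xi,\eta) = q^2|\xi|^{2(q-1)}|\eta|^2 \hat w(\xi,\eta)$, a Schr\"odinger-type equation on $\mathbb{R}^2$ whose potential is strictly positive and unbounded for each fixed $\eta \ne 0$; a Liouville-type argument shows the only tempered solution in $\xi$ is zero, so $\hat w$ is supported at $\eta = 0$ and $w$ is therefore polynomial in $y$ of degree $\le \lfloor 1+\mu\rfloor = 1$.  Writing $w(\xi,y) = w_0(\xi) + w_1(\xi)\cdot y$ with $w_0, w_1$ harmonic on $\mathbb{R}^2$, the planar Liouville theorem together with the growth bounds $|w_0(\xi)| \le C(1+|\xi|^{q(1+\mu)})$ and $|w_1(\xi)| \le C(1+|\xi|^{q\mu})$ force $w_0$ to be a harmonic polynomial of degree $\le q$ and each component of $w_1$ to be constant.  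The average-free condition excludes $\phi$-modes $k \in \{0,\pm q,\pm 2q,\ldots\}$: this kills $w_1$ (which contributes only to $k=0$) and restricts $w_0$ to modes $k = \pm 1,\ldots,\pm(q-1)$, giving $w_0(\xi) = \sum_{d=1}^{q-1}\operatorname{Re}(c_d \xi^d)$.  Reverting via $x = \xi^q$, this reads $u_f(x,y) = \sum_{d=1}^{q-1}|x|^{d/q}\operatorname{Re}(c_d e^{id\arg x/q})$, but $u_f \in C^{1,\mu;q}(\mathbb{R}^n)$ requires each branch to be $C^1$ up to the axis $\{x=0\}$; since $d/q < 1$ the gradient $|x|^{d/q-1}$ blows up at $x=0$, forcing $c_d = 0$ for every $d$.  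Hence $w \equiv 0$, $u_f \equiv 0$, and $u_l(X) = a + b\cdot X$ for every $l$.

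The main obstacle is justifying the Fourier-in-$y$ step: the Schr\"odinger-type potential $q^2|\xi|^{2(q-1)}|\eta|^2$ makes the transformed operator non-uniformly-elliptic in $\xi$, and the relevant Liouville statement is not textbook.  A reasonable alternative that sidesteps this is to apply the same reduction inductively to $D_{y_j}w$, whose global growth exponent is $\mu$ rather than $1+\mu$, obtaining after a single iteration $D_y^2 w \equiv 0$ directly; combined with the bound on $w_1$ above, this shows $w$ is affine in $y$ without invoking any distributional Fourier transform.
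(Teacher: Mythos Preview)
Your reduction to the average-free case and the lift $w(\xi,y)=u_{f,l}(\xi^q,y)$ to a single-valued solution of the Grushin operator $L=\Delta_\xi+q^2|\xi|^{2q-2}\Delta_y$ is a genuinely different route from the paper, which instead extends the frequency-function machinery of \cite{SW2} (monotonicity of $N_{u,(0,y_0)}$, classification of homogeneous solutions of degree in $[1,1+\delta)$, then a blow-down argument and dimension reduction to reach the full range $\mu\in(0,1/q)$). Your argument is complete and elegant when $n=2$: there $L=\Delta_\xi$, the bound $|w(\xi)|\le C|\xi|^{q(1+\mu)}$ makes $w$ a harmonic polynomial of degree $\le q$, and the average-free and $C^{1}$ conditions kill all admissible modes.

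For $n\ge 3$, however, there is a real gap at exactly the point you flag. Taking the Fourier transform in $y$ of a function that is merely bounded in $y$ yields only a tempered distribution $\hat w(\xi,\cdot)$; you have not explained in what sense it satisfies $(-\Delta_\xi+q^2|\xi|^{2q-2}|\eta|^2)\hat w=0$ for fixed $\eta$, nor proved the asserted ``Schr\"odinger Liouville'' that tempered solutions of this equation vanish for $\eta\neq 0$. Your proposed alternative does not sidestep the difficulty: iterating on $D_{y_j}w$ still requires showing a Grushin solution with sublinear growth in $|\xi|$ is independent of $y$, which is the same unproved step (note the hypothesis $[Du]_{\mu;q,\mathbb{R}^n}<\infty$ gives no control on $[D(D_{y}u_f)]_{\mu';q}$, so you cannot simply re-apply the lemma to $D_{y}u_f$). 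The honest fix is to invoke a polynomial Liouville theorem for Grushin-type operators: since $u_f(0,y)=Du_f(0,y)=0$ (average-freeness plus $C^{1;q}$), one actually has the stronger bound $|w(\xi,y)|\le C|\xi|^{q(1+\mu)}$ \emph{uniformly in $y$}, so $|w|\le C\|(\xi,y)\|^{q(1+\mu)}$ for the anisotropic norm; polynomial Liouville for $L$ then makes $w$ a polynomial, and the uniform-in-$y$ bound forces $w$ to be independent of $y$, reducing to your two-dimensional argument. Such Liouville theorems are known but not elementary, and in any case are not supplied here; by contrast, the paper's frequency-function route is self-contained given \cite{SW2} and handles all $n$ uniformly.
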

\begin{proof}
Let $u$ be as in the statement of Lemma \ref{gap_lemma}.  $u = u_a + u_f$ where $u_a = \frac{1}{q} \sum_{j=1}^q u_j$ and $u_f$ are as by (\ref{averageandfree}) and $u_a$ is an affine function by the Liouville theorem.  Thus it suffices to suppose that $u$ is average-free and show that $u = 0$. 

For $u \in C^{1;q}(\mathbb{R}^n)$ that is non-zero, average-free, and satisfies (\ref{gap_lemma_eqn1}) and $y_0 \in \mathbb{R}^{n-2}$, we define the frequency function of $u$ at $(0,y_0)$ by 
\begin{equation*}
	N_{u,(0,y_0)}(\rho) = \frac{\rho^{2-n} \int_{B_{\rho}(0,y_0)} \sum_{l=1}^q |Du_l|^2}{\rho^{1-n} \int_{\partial B_{\rho}(0,y_0)} \sum_{l=1}^q |u_l|^2}
\end{equation*}
for $\rho \in (0,\infty)$.  We extend the two identities in~\cite[Remark 2.3(2)]{SW2} by either the argument in~\cite{SW2} using the fact that $u$ and $Du$ vanish on $\{0\} \times \mathbb{R}^{n-2}$ or by using a cutoff function argument.  We then can extend Lemma 2.2, Remark 2.3(1)(3)(4), and Remark 2.4 of~\cite{SW2} to establish monotonicity and other standard properties of frequency functions for $N_{u,(0,y_0)}$.

Next we extend~\cite[Lemma 2.5]{SW2} by showing that for some $\delta = \delta(n,q) \in (0,1)$, there are no $u \in C^{1;q}(\mathbb{R}^n)$ that are non-zero, are average-free, satisfy (\ref{gap_lemma_eqn1}), and are homogeneous degree $\sigma$ for $\sigma \in [1,1+\delta)$.  Arguing as in~\cite{SW2} using the fact that $Du$ vanishes on $\{0\} \times \mathbb{R}^{n-2}$, if $u \in C^{1;q}(\mathbb{R}^n)$ is average-free, satisfies (\ref{gap_lemma_eqn1}), and is homogeneous degree one then $Du_l$ all equal the same constant function on $S^{n-1} \setminus [0,\infty) \times \{0\} \times \mathbb{R}^{n-2}$.  Since $u$ is average-free, this implies $u_l = 0$ on $\mathbb{R}^n \setminus [0,\infty) \times \{0\} \times \mathbb{R}^{n-2}$ for all $l = 1,2,\ldots,q$.  By arguing as in~\cite{SW2} we also conclude that if $u^{(j)} \in C^{1;q}(\mathbb{R}^n)$ that are non-zero, are average-free, satisfy (\ref{gap_lemma_eqn1}) with $u^{(j)}$ in place of $u$, and are homogeneous degree $\sigma_j$ for $\sigma_j \downarrow 1$, then after passing to a subsequence $u^{(j)}$ converges strongly in $L^{2;q}(B_1(0))$ to $u \in W^{1,2;q}(B_1(0))$ such that $u$ is average free, $u$ satisfies (\ref{gap_lemma_eqn1}), $Du_l$ all equal the same constant function, and $\|u\|_{L^{2;q}(B_1(0))} = 1$.  But $u$ being average-free implies that $Du_l$ must all equal the zero function and thus $u_l$ all equal the zero function, contradicting $\|u\|_{L^{2;q}(B_1(0))} = 1$.

By extending the proof of~\cite[Corollary 2.6]{SW2} there is no nonzero $u \in C^{1,\mu;q}(\mathbb{R}^n)$ such that $u$ is average-free, $u$ satisfies (\ref{gap_lemma_eqn1}), and $[Du]_{\mu;q,\mathbb{R}^n} < \infty$ for some $\mu \in (0,\delta)$.  Having established Lemma \ref{gap_lemma} in the case that $\mu \in (0,\delta)$, we can prove Lemma \ref{schauder_strong} in the special case that $0 < \mu < \tau < \delta$.  Using the dimension reduction argument in the proof of~\cite[Theorem 4.1]{SW2} and the fact that the homogeneous, average-free $u \in C^{1;q}(\mathbb{R}^2)$ satisfying (\ref{gap_lemma_eqn1}) are given by $u_l(re^{i\theta}) = \op{Re}(cr^{1+k/q} e^{i k/q (\theta + 2(l-1)\pi)})$ for $r \geq 0$, $\theta \in [0,2\pi)$, and $l = 1,2,\ldots,q$ for some constant $c \in \mathbb{C}$ and integer $k \geq q+1$, we conclude that there are no non-zero, average-free $u \in C^{1;q}(\mathbb{R}^n)$ that satisfies (\ref{gap_lemma_eqn1}) is homogeneous degree $\sigma \in [1,1+1/q)$ and thus Lemma \ref{gap_lemma} holds for all $\mu \in (0,1/q)$. 
\end{proof}

\begin{proof}[Proof of Lemma \ref{schauder_strong}]
We adapt the proof of~\cite[Lemma 3.2]{SW2}.  We in fact assume $R = 1$ and prove the weaker inequality that for every $\delta > 0$, 
\begin{equation*}
	[Du]_{\tau;q,B_{1/2}(X_0)} \leq \delta [Du]_{\tau;q,B_1(X_0)} + C \left( \sup_{B_1(0)} |u| 
		+ \sup_{B_1(0)} |Du| + \|f\|_{C^{0,\mu;q}(B_1(X_0))} \right) 
\end{equation*}
for some constant $C = C(n,q,\mu,\lambda,\Lambda,\delta) \in (0,\infty)$.  Then by translating and rescaling, $u$ as in the statement of Lemma \ref{schauder_strong} satisfies 
\begin{equation*}
	\rho^{1+\tau} [Du]_{\tau;q,B_{\rho/2}(Y)} \leq \delta \rho^{1+\tau} [Du]_{\tau;q,B_{\rho}(Y)} 
		+ C \left( \sup_{B_{\rho}(Y)} |u| + \rho \sup_{B_{\rho}(Y)} |Du| + \rho^2 \|f\|'_{C^{0,\mu;q}(B_{\rho}(Y))} \right) 
\end{equation*}
for all $B_{\rho}(Y) \subseteq B_R(X_0)$ and (\ref{schauder_strong_eqn2}) follows by standard interpolation inequalities.

Suppose instead that for some $\delta > 0$ and every positive integer $k$, there is a ball $B_1(X_k)$ and $u_k = (u_{k,1},u_{k,2},\ldots,u_{k,q}) \in C^{2;q}(B_R(X_l) \setminus \{0\} \times \mathbb{R}^{n-2}) \cap C^{1,\tau;q}(B_R(X_k))$ and $a_k^{ij} = (a_{k,1}^{ij},a_{k,2}^{ij},\ldots,a_{k,q}^{ij}),$ $f_k = (f_{k,1},f_{k,2},\ldots,f_{k,q}) \in C^{0,\mu;q}(B_R(X_l))$ such that (\ref{ellipticity}) and (\ref{schauder_strong_eqn1}) hold with $u_{k,l}$, $a_{k,l}^{ij}$, and $f_{k,l}$ in place of $u_l$, $a_l^{ij}$, and $f_l$ and $\|a_k^{ij}\|_{C^{0,\mu;q}(B_R(X_0))} \leq \Lambda$ but 
\begin{equation} \label{schauder_strong_eqn4}
	[Du_k]_{\tau;q,B_{1/2}(X_k)} > \delta [Du_k]_{\tau;q,B_1(X_k)} + k \left( \sup_{B_1(0)} |u_k| 
		+ \sup_{B_1(0)} |Du_k| + \|f_k\|'_{C^{0,\mu;q}(B_1(X_0))} \right) 
\end{equation}
Assume $[Du_k]_{\tau;q,B_{1/2}(X_k)} \leq 2q [Du_{k,1}]_{\mu;B_{1/2}(X_k) \cap \mathbb{R} \times (0,\infty) \times \mathbb{R}^{n-2}}$.  Select $Y_k, Y'_k  \in B_{1/2}(X_k) \cap \mathbb{R} \times (0,\infty) \times \mathbb{R}^{n-2}$ such that 
\begin{equation} \label{schauder_strong_eqn5}
	\frac{|Du_{k,1}(Y_k) - Du_{k,1}(Y'_k)|}{|Y_k - Y'_k|^{\tau}} \geq \frac{1}{4q} [Du_k]_{\tau;B_{1/2}(X_k)} 
\end{equation}
and let $\rho_k = |Y_k - Y'_k|$.  By (\ref{schauder_strong_eqn4}) and (\ref{schauder_strong_eqn5}),  
\begin{equation*}
	\frac{1}{4q} [Du_k]_{\tau;B_{1/2}(X_k)} \leq \frac{2}{\rho_k^{\tau}} \sup_{B_1(X_k)} |Du_k| < \frac{2}{k \rho_k^{\tau}} [Du_{k,1}]_{\tau;B_{1/2}(X_l)}, 
\end{equation*}
so $\rho_k^{\tau} \leq 8/qk$ for all $k$ and thus $\rho_k \rightarrow 0$ as $k \rightarrow \infty$. 

Suppose $\text{dist}(\{Y_k,Y'_k\}, \{0\} \times \mathbb{R}^{n-2})/\rho_k \leq c$ for some constant $c \in [1,\infty)$.  Then for some $Z_k \in \{0\} \times \mathbb{R}^{n-2}$, $|Y_k - Z_k| \leq 2c \rho_k$.  By translating assume $Z_k = 0$.  Let $R_k = 1/2\rho_k - 2c > 0$ for $k$ sufficiently large.  Rescale letting $\zeta_k = Y_k/\rho_k$ and $\zeta'_k = Y'_k/\rho_k$ and $\hat u_k = (\hat u_{k,1}, \hat u_{k,2},\ldots,\hat u_{k,q})$, $\hat a_k^{ij} = (\hat a_{k,1}^{ij}, \hat a_{k,2}^{ij},\ldots,\hat a_{k,q}^{ij})$, and $\hat f_k = (\hat f_{k,1}, \hat f_{k,2},\ldots,\hat f_{k,q})$ where 
\begin{align*}
	\hat u_{k,l}(X) &= \rho_l^{-1-\tau} [Du_k]_{\tau,B_1(X_k)}^{-1} (u_{k,l}(\rho_k X) - u_{k,l}(0) - Du_{k,l}(0) \cdot \rho_k X), \\
	\hat a_{k,l}^{ij}(X) &= a_{k,l}^{ij}(\rho_k X), \\
	\hat f_{k,l}(X) &= \rho_l^{1-\tau} [Du_k]_{\tau,B_1(X_k)}^{-1} f_{k,l}(\rho_k X), 
\end{align*}
for $X \in B_{R_k}(0) \setminus [0,\infty) \times \{0\} \times \mathbb{R}^{n-2}$ and $l = 1,2,\ldots,q$ so that 
\begin{gather*}
	\hat a_{k,l}^{ij} D_{ij} \hat u_{k,l} = \hat f_{k,l} \text{ in } B_{R_k}(0) \setminus \{0\} \times \mathbb{R}^{n-2}, \\
	[D\hat u_k]_{\tau;q,B_{R_k}(0)} \leq 1, \quad |D\hat u_{k,1}(\zeta_k) - D\hat u_{k,1}(\zeta'_k)| \geq \frac{\delta}{4q}. 
\end{gather*}
Since $\{\zeta_k\}$ and $\{\zeta'_k\}$ are bounded, after passing to a subsequence, $\zeta_k \rightarrow \zeta$ and $\zeta'_k \rightarrow \zeta'$ for some points $\zeta,\zeta' \in \mathbb{R}^n$.  Since 
\begin{equation*}
	\sup_{B_{R_k}(0)} |\hat a_k^{ij}| + \rho_k^{-\mu} [a_k^{ij}]_{\mu;q,B_{R_l}(0)} \leq C \Lambda, 
\end{equation*}
after passing to a subsequence $\{\hat a_{k,l}^{ij}\}$ converges to some constant $\hat a^{ij}$ (independent of $l$) uniformly on $K \setminus [0,\infty) \times \{0\} \times \mathbb{R}^{n-2}$ as $k \rightarrow \infty$ for every compact subset $K$ of $\mathbb{R}^n$.  By (\ref{schauder_strong_eqn4}), 
\begin{equation*}
	\sup_{B_{R_l}(0)} |\hat f_k| + \rho^{-\mu} [\hat f_k]_{\mu, B_{R_l}(0)} \leq C \rho_k^{1-\tau}/k 
\end{equation*} 
for some constant $C = C(m,n) \in (0,\infty)$, so after passing to a subsequence $\{\hat f_k\}$ converges to zero in $C^{0;q}(\overline{\Omega})$ as $k \rightarrow \infty$ for all $\Omega \subset \subset \mathbb{R}^n$.  Since $[D\hat u_k]_{\tau;q,B_{R_k}(0)} \leq 1$, after passing to a subsequence, $\{u_k\}$ converges in $C^{1;q}(\overline{\Omega})$ to some $\hat u = (\hat u_1,\hat u_2,\ldots,\hat u_q) \in C^{1,\tau;q}(\mathbb{R}^n)$ for all $\Omega \subset \subset \mathbb{R}^n$.  Moreover, by the interior $C^{2,\mu}$ Schauder estimates for single-valued functions~\cite[Corollary 6.3]{GT} applied locally on $\mathbb{R}^n \setminus \{0\} \times \mathbb{R}^{n-2}$, after passing to a subsequence $\hat u_k \rightarrow \hat u$ in $C^{2;q}$ on compact subsets of $\mathbb{R}^n \setminus \{0\} \times \mathbb{R}^{n-2}$.  Hence $\hat u$ satisfies the differential equation $\hat a^{ij} D_{ij} \hat u = 0$ on $\mathbb{R}^n \setminus \{0\} \times \mathbb{R}^{n-2}$.  However, $[D\hat u]_{\tau, \mathbb{R}^n} \leq 1$ and $|D\hat u_1(\zeta) - D\hat u_1(\zeta')) \geq \delta/4q$, which after an affine change of variables contradicts Lemma \ref{gap_lemma}. 

Suppose instead that $\text{dist}(\{Y_k,Y'_k\}, \{0\} \times \mathbb{R}^{n-2})/\rho_k$ is unbounded.  Assume $\text{dist}(\{Y_k,Y'_k\}, \{0\} \times \mathbb{R}^{n-2})/\rho_k$ tends to infinity and $Y'_k \in B_{1/2}(0) \cap \{0\} \times (0,\infty) \times \mathbb{R}^{n-2}$.  For some $R_k \rightarrow \infty$, $R_k < \text{dist}(\{Y_k,Y'_k\}, \{0\} \times \mathbb{R}^{n-2})/\rho_k$ and $R_k < 1/2\rho_k$.  Rescale letting $\zeta_k = (Y_k-Y'_k)/\rho_k$ and letting $\hat u_k$, $\hat a_k^{ij}$, and $\hat f_k$ be the single-valued functions defined by 
\begin{align*}
	\hat u_k(X) &= \rho_k^{-1-\tau} [Du_{k,1}]_{\tau,B_1(X_k)}^{-1} (u_k(Y'_k + \rho_k X) - u_k(Y'_k) - Du_k(Y'_k) \cdot \rho_k X), \\
	\hat a_k^{ij}(X) &= a_{k,1}^{ij}(Y'_k + \rho_k X), \\
	\hat f_k(X) &= \rho_k^{-1-\tau} [Du_{k,1}]_{\tau,B_1(X_k)}^{-1} f_k(Y'_k + \rho_k X), 
\end{align*}
for $X \in B_{R_l}(0)$ for large $k$.  Similar to above, after passing to a subsequence, $\{\zeta_k\}$ converges to some $\zeta$, $\{\hat a_k^{ij}\}$ converges uniformly on compact subsets of $\mathbb{R}^n$ to some constant $\hat a^{ij}$, $\{\hat u_k\}$ converges in $C^2$ on compact subsets of $\mathbb{R}^n$ to some single-valued function $\hat u$, and $\{\hat f_k\}$ converges uniformly to zero.  $\hat u$ satisfies $\hat a^{ij} D_{ij} \hat u = 0$ on $\mathbb{R}^n$, $[D\hat u]_{\tau,\mathbb{R}^{n-2}} \leq 1$, and $|D\hat u(\zeta) - D\hat u(0)| \geq \frac{\delta}{4q}$, which after an affine change of variables contradicts the Liouville theorem for single-valued harmonic functions.
\end{proof}

Next we consider equations of the form 
\begin{equation} \label{divform}
	\int_{\Omega} \sum_{l=1}^q \left( \left( a_l^{ij} D_j u_l + b^i u_l \right) D_i \zeta_l - \left( c_l^j D_j u + d_l u_l \right) \zeta_l \right) 
	= (\leq , \geq) \, \int_{\Omega} \sum_{l=1}^q \left( f_l^i D_i \zeta_l - g_l \zeta_l \right)
\end{equation}
for all $\zeta = (\zeta_1,\zeta_2,\ldots,\zeta_q) \in C_c^{1;q}(\Omega \setminus \{0\} \times \mathbb{R}^{n-2})$, where $\Omega$ is an open set in $\mathbb{R}^n$, $u = (u_1,u_2,\ldots,u_q) \in W^{1,2;q}(\Omega)$, $a^{ij} = (a^{ij}_1,a^{ij}_2,\ldots,a^{ij}_q), b^i = (b^i_1,b^i_2,\ldots,b^i_q), c^j = (c^j_1,c^j_2,\ldots,c^j_q), d = (d_1,d_2,\ldots,d_q) \in L^{\infty;q}(\Omega)$, and $f^i = (f^i_1,f^i_2,\ldots,f^i_q), g = (g_1,g_2,\ldots,g_q) \in L^{2;q}(\Omega)$.  We require ellipticity condition (\ref{ellipticity}) to hold for some constant $\lambda > 0$.  We claim that (\ref{divform}) continues to hold if instead $\zeta \in C_c^{1;q}(\Omega)$.  To see this, for every $\delta > 0$ let $\chi_{\delta} \in C^1(\mathbb{R}^n)$ be a single-valued function such that $0 \leq \chi_{\delta} \leq 1$, $\chi_{\delta} = 1$ on $\mathbb{R}^n \setminus B^2_{\delta}(0) \times \mathbb{R}^{n-2}$, $\chi_{\delta} = 0$ on $B^2_{\delta/2}(0) \times \mathbb{R}^{n-2}$, and $|D\chi_{\delta}| \leq 3/\delta$.  Replace $\zeta_l$ with $\zeta_l \chi_{\delta}$ in (\ref{divform}) to get 
\begin{align*}
	&\int_{\Omega} \sum_{l=1}^q \left( \left( a_l^{ij} D_j u_l + b^i u_l \right) D_i \zeta_l - \left( c_l^j D_j u + d_l u_l \right) \zeta_l \right) \chi_{\delta}
	\\&= (\leq , \geq) \, \int_{\Omega} \sum_{l=1}^q \left( f_l^i D_i \zeta_l - g_l \zeta_l \right) \chi_{\delta} 
	- \int_{\Omega} \sum_{l=1}^q \left( a_l^{ij} D_j u_l + b^i u_l - f_l^i \right) \zeta_l D_i \chi_{\delta}
\end{align*}
and let $\delta \downarrow 0$ to get (\ref{divform}) for $\zeta \in C_c^{1;q}(\Omega)$.  Using (\ref{divform}) and Sobolev inequality Lemma \ref{sololevlemma} below, the maximum principle~\cite[Theorem 8.1]{GT} and global supremum estimates~\cite[Theorem 8.16]{GT} readily extend to $u \in W^{1,2;q}(\Omega)$ satisfying (\ref{divform}) with the $\leq$ sign.  Using (\ref{divform}) and Sobolev inequality Lemma \ref{sololevlemma} and Poincar\'{e} inequality Lemma \ref{poincarelemma} below we will extend local H\"{o}lder continuity estimates~\cite[Theorem 8.22]{GT} to solutions to (\ref{divform}) with the $=$ sign.

\begin{lemma} \label{sololevlemma}
Let $1 \leq p < n$.  Suppose $u \in W_0^{1,p;q}(\mathbb{R}^n)$.  Then 
\begin{equation*} 
	\|u\|_{L^{np/(n-p);q}(\mathbb{R}^n)} \leq C \|Du\|_{L^{p;q}(\mathbb{R}^n)} 
\end{equation*} 
for some $C = C(n,q,p) \in (0,\infty)$.  
\end{lemma}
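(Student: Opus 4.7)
The plan is to reduce to $u\in C_c^{1;q}(\mathbb{R}^n;\mathbb{R}^m)$ by density, then apply the classical Gagliardo--Nirenberg--Sobolev inequality to a single-valued, permutation-symmetric scalar auxiliary function built from $u$.

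By the definition of $W_0^{1,p;q}$ as the $W^{1,p;q}$-closure of $C_c^{1;q}$, it suffices to prove the inequality for $u\in C_c^{1;q}(\mathbb{R}^n;\mathbb{R}^m)$: given $u\in W_0^{1,p;q}$ and a $W^{1,p;q}$-approximating sequence $\{u^{(j)}\}\subset C_c^{1;q}$, uniform $L^{p^*;q}$ bounds on the $u^{(j)}$ (from the inequality applied to each $u^{(j)}$) together with a.e.\ convergence of a subsequence and Fatou's lemma pass the estimate to $u$.

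For $u\in C_c^{1;q}(\mathbb{R}^n;\mathbb{R}^m)$ I would introduce the single-valued scalar auxiliary function
\[
v(X)=\Bigl(\sum_{l=1}^q|u_l(X)|^p\Bigr)^{1/p},\qquad X\in\mathbb{R}^n\setminus[0,\infty)\times\{0\}\times\mathbb{R}^{n-2}.
\]
The key observation is that $\sum_l|u_l|^p$ is invariant under permutations of the tuple $(u_1,\ldots,u_q)$, while the matching condition in Definition~\ref{defnCq} says that at every point of the slit the limits from above and below are related by a cyclic permutation. Hence $v$ extends continuously to a compactly supported function on all of $\mathbb{R}^n$. Off the slit and away from $\{v=0\}$, a direct calculation combined with H\"older's inequality gives the pointwise bound $|Dv|\leq(\sum_l|Du_l|^p)^{1/p}$. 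Integrating by parts on the two open half-spaces $\mathbb{R}\times(0,\infty)\times\mathbb{R}^{n-2}$ and $\mathbb{R}\times(-\infty,0)\times\mathbb{R}^{n-2}$ and observing that the boundary contributions at the slit cancel since $v^+=v^-$ there, one sees that the distributional derivative of $v$ coincides with the classical one, and hence $v\in W_0^{1,p}(\mathbb{R}^n)$.

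Applying the classical Gagliardo--Nirenberg--Sobolev inequality to $v$ gives $\|v\|_{L^{p^*}(\mathbb{R}^n)}\leq C\|Dv\|_{L^p(\mathbb{R}^n)}$. Using the elementary inequality $\sum_l b_l^r\leq(\sum_l b_l)^r$, valid for $r=p^*/p\geq 1$ and $b_l\geq 0$,
\[
\|u\|_{L^{p^*;q}}^{p^*}=\int\sum_l|u_l|^{p^*}\leq\int\Bigl(\sum_l|u_l|^p\Bigr)^{p^*/p}=\|v\|_{L^{p^*}}^{p^*},
\]
and combining with $\|Dv\|_{L^p}^p\leq\|Du\|_{L^{p;q}}^p$ yields the desired inequality. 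The main technical obstacle is the verification that $v\in W^{1,p}(\mathbb{R}^n)$: although each component $u_l$ individually may suffer a jump across the slit, one must check that the symmetry of $\sum_l|u_l|^p$ under the matching permutation really does prevent any singular contribution to the distributional derivative of $v$. This uses both the matching condition of Definition~\ref{defnCq} and the fact that the relative interior of the slit is a smooth hypersurface (its codimension-$2$ boundary axis $\{0\}\times\mathbb{R}^{n-2}$ being negligible).
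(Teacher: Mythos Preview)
Your argument is correct, but the paper takes a much shorter route. Instead of building the permutation-symmetric scalar $v=(\sum_l|u_l|^p)^{1/p}$ and checking that it lies in $W^{1,p}_0(\mathbb{R}^n)$, the paper simply applies the classical Sobolev inequality on the half-space $\mathbb{R}\times(0,\infty)\times\mathbb{R}^{n-2}$ (and its reflection) to each single-valued function $u_l$ separately; since $u\in C_c^{1;q}$, each $u_l$ restricted to a closed half-space is $C^1$ with compact support in the closure, so the half-space Sobolev inequality (obtained by even reflection) applies directly. Summing the $2q$ resulting estimates and using the equivalence of $\ell^p$ and $\ell^{p^*}$ norms on $\mathbb{R}^{2q}$ gives the conclusion in two lines.

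Your approach has the conceptual appeal of packaging the $q$ sheets into one genuine single-valued function on $\mathbb{R}^n$, exploiting the permutation symmetry of the matching condition; this is the same idea that makes the associated $q$-valued function $\tilde u$ well-defined. The cost is the technical verification that $v\in W^{1,p}(\mathbb{R}^n)$, which (as you note) requires handling the non-differentiability of the norm at $v=0$ and checking that no singular part appears on the slit. The paper's approach sidesteps all of this by never asking the sheets to interact: it treats each $u_l$ on each half-space as an independent single-valued problem, at the modest price of invoking the half-space (rather than full-space) Sobolev inequality.
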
 
\begin{proof}
By the Sobolev inequality for the single-valued functions $u_l |_{\mathbb{R} \times (0,\infty) \times \mathbb{R}^{n-2}}$ and $u_l |_{\mathbb{R} \times (-\infty,0) \times \mathbb{R}^{n-2}}$, $l = 1,2,\ldots,q$, 
\begin{align*}
	\|u\|_{L^{np/(n-p);q}(\mathbb{R}^n)} 
	&= \left( \sum_{l=1}^q (\|u_l\|_{L^{np/(n-p);q}(\mathbb{R} \times (0,\infty) \times \mathbb{R}^{n-2})}^{np/(n-p)} 
		+ \|u_l\|_{L^{np/(n-p);q}(\mathbb{R} \times (-\infty,0) \times \mathbb{R}^{n-2})}^{np/(n-p)}) \right)^{(n-p)/np}
	\\&\leq C \left( \sum_{l=1}^q (\|Du_l\|_{L^{p;q}(\mathbb{R} \times (0,\infty) \times \mathbb{R}^{n-2})}^{np/(n-p)}  
		+ \|Du_l\|_{L^{p;q}(\mathbb{R} \times (-\infty,0) \times \mathbb{R}^{n-2})}^{np/(n-p)}) \right)^{(n-p)/np}
	\\&\leq C \|Du\|_{L^{p;q}(\mathbb{R}^n)} 
\end{align*}
for $C = C(n,q,p) \in (0,\infty)$.  
\end{proof}

\begin{lemma} \label{poincarelemma}
For $u \in W^{1,2;q}(B_R(0))$, 
\begin{equation} \label{poincare_ineq}
	\|u - \ell\|_{L^{2;q}(B_R(0))} \leq CR \|Du\|_{L^{2;q}(B_R(0))}, 
\end{equation} 
for some $C = C(n,q) \in (0,\infty)$ where $\ell = \int_{B_R(0)} \frac{1}{q} \sum_{j=1}^q u_j$.
\end{lemma}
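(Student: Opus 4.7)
The plan is to split $u = u_a \mathbf{1} + u_f$ into its pointwise average $u_a = \frac{1}{q}\sum_{l=1}^q u_l$ and its average-free part $u_f = u - u_a\mathbf{1}$. The branching relations built into $W^{1,2;q}$ (specifically, the cyclic shift $u_l\mapsto u_{l\pm 1}$ across the slit preserves the sum $\sum_l u_l$) imply that $u_a$ extends to a genuine element of the ordinary $W^{1,2}(B_R(0))$. The scalar Poincar\'e inequality then gives $\|u_a - \ell\|_{L^2(B_R)} \le CR\|Du_a\|_{L^2(B_R)}$, and Cauchy--Schwarz bounds $\|Du_a\|_{L^2}^2 \le q^{-1}\|Du\|_{L^{2;q}}^2$. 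Because $\sum_l u_{f,l}\equiv 0$, the Pythagorean identity
\begin{equation*}
\|u - \ell\|_{L^{2;q}(B_R)}^2 = q\|u_a-\ell\|_{L^2(B_R)}^2 + \|u_f\|_{L^{2;q}(B_R)}^2
\end{equation*}
reduces matters to the estimate $\|u_f\|_{L^{2;q}(B_R)}\le CR\|Du_f\|_{L^{2;q}(B_R)}$ for $u_f$ average-free.

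For this piece, I would use the $q$-fold branched covering $\Phi(\xi,y)=(\xi^q,y)$ (with $\xi \in \mathbb{C}$) to lift $u_f$ to a single-valued function $\hat u_f$ on $\hat\Omega := \Phi^{-1}(B_R(0)) = \{|\xi|^{2q}+|y|^2<R^2\}$, by setting $\hat u_f(\xi,y) = u_{f,l}(\xi^q,y)$ when $\arg\xi\in((l-1)\tfrac{2\pi}{q},\,l\tfrac{2\pi}{q})$. The branching identities are precisely what is needed to make $\hat u_f \in W^{1,2}(\hat\Omega)$. Since the real Jacobian of $\Phi$ equals $q^2|\xi|^{2(q-1)}$, change of variables yields
\begin{equation*}
\|u_f\|_{L^{2;q}(B_R)}^2 = \int_{\hat\Omega} q^2|\xi|^{2(q-1)}|\hat u_f|^2\,d\xi\,dy,
\end{equation*}
\begin{equation*}
\|Du_f\|_{L^{2;q}(B_R)}^2 = \int_{\hat\Omega}\bigl(|\nabla_\xi\hat u_f|^2+q^2|\xi|^{2(q-1)}|\nabla_y\hat u_f|^2\bigr)\,d\xi\,dy.
\end{equation*}

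The average-free condition $\sum_l u_{f,l}\equiv 0$ translates to $\sum_{l=0}^{q-1}\hat u_f(\omega^l\xi,y)=0$ with $\omega=e^{2\pi i/q}$, which in Fourier series in $\phi=\arg\xi$ eliminates exactly the modes $e^{ik\phi}$ with $k\equiv 0\pmod q$; in particular the zero mode vanishes, so $\phi\mapsto\hat u_f(\rho e^{i\phi},y)$ has vanishing mean on every circle $|\xi|=\rho$. Wirtinger's inequality on $S^1$ then gives $\int_0^{2\pi}|\hat u_f|^2\,d\phi \le \int_0^{2\pi}|\partial_\phi\hat u_f|^2\,d\phi$ for a.e.\ $(\rho,y)$. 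Multiplying by $q^2\rho^{2q-1}$ and using the pointwise bound $|\partial_\phi\hat u_f|^2\le\rho^2|\nabla_\xi\hat u_f|^2$ together with the constraint $\rho^{2q}\le R^2$ on $\hat\Omega$ to convert the extra $\rho^{2q}$ factor into $R^2$, then integrating over $(\rho,y)$ with polar measure $\rho\,d\rho\,d\phi\,dy$, yields $\|u_f\|_{L^{2;q}(B_R)}^2 \le q^2 R^2\|Du_f\|_{L^{2;q}(B_R)}^2$.

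The main technical obstacle is the verification that $\hat u_f$ genuinely lies in $W^{1,2}(\hat\Omega)$, with no spurious distributional contributions along the sector rays $\arg\xi = l\tfrac{2\pi}{q}$ or near the branch locus $\xi=0$. This should follow because the test-function class $C_c^{1;q}$ is designed so that, pulled back via $\Phi$, its elements become ordinary $C_c^1$ functions on $\hat\Omega$ (the branching exactly cancels boundary terms across the sector rays), and because $\{\xi=0\}$ has codimension two and hence vanishing $W^{1,2}$-capacity in dimension $n\ge 3$, so no contribution arises from the branch locus.
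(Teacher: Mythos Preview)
Your proof is correct and takes a genuinely different route from the paper. Both arguments begin identically: decompose $u = u_a\mathbf{1} + u_f$, use the Pythagorean identity, and handle $u_a$ by the scalar Poincar\'e inequality, reducing to the average-free case. From there the paper proceeds by a soft compactness argument: assuming the inequality fails for a sequence $u^{(j)}$ with $\|u^{(j)}\|_{L^{2;q}}=1$ and $\|Du^{(j)}\|_{L^{2;q}}<1/j$, Rellich gives a limit $u\in W^{1,2;q}$ with $Du=0$; the $W^{1,2;q}$ structure then forces all $u_l$ to equal a single constant, which together with average-freeness yields $u=0$, contradicting $\|u\|_{L^{2;q}}=1$. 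Your argument is instead constructive: after lifting via the $q$-fold cover $\xi\mapsto\xi^q$, the average-free condition kills the zero angular Fourier mode of $\hat u_f$, and Wirtinger's inequality on each circle $|\xi|=\rho$, combined with $|\xi|^{2q}\le R^2$ on $\hat\Omega$, gives $\|u_f\|_{L^{2;q}(B_R)}\le qR\|Du_f\|_{L^{2;q}(B_R)}$ directly. The paper's proof is shorter and sidesteps the change-of-variables bookkeeping and the verification that $\hat u_f\in W^{1,2}(\hat\Omega)$, but yields no quantitative constant; your proof gives the explicit constant $q$ for the average-free piece and in fact uses only the $\nabla_\xi$ part of the gradient, so it proves something slightly sharper.
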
 

\begin{rmk}
The reason for stating (\ref{poincare_ineq}) in terms of $W^{1,2;q}$ functions on a ball $B_R(0)$ centered at a point on $\{0\} \times \mathbb{R}^{n-2} = \emptyset$ is that (\ref{poincare_ineq}) fails if we replace $B_R(0)$ with a ball $B$ such that $\overline{B} \cap \{0\} \times \mathbb{R}^{n-2} = \emptyset$.  For example, (\ref{poincare_ineq}) fails if $u_1 \equiv -1$, $u_2 \equiv 1$, and $u_l \equiv 0$ for $l \geq 3$ in $B$.
\end{rmk} 

\begin{proof}[Proof of Lemma \ref{poincarelemma}]
By scaling, we may suppose $R = 1$.  Writing $u = u_a + u_f$ for $u_a = \frac{1}{q} \sum_{j=1}^q u_j$ and $u_f$ as by (\ref{averageandfree}), 
\begin{align*}
	\|u - \ell\|_{L^{2;q}(B_1(0))}^2 &= \|u_a - \ell\|_{L^{2;q}(B_1(0))}^2 + \|u_f\|_{L^{2;q}(B_1(0))}^2 \\
	\|Du\|_{L^{2;q}(B_1(0))}^2 &= \|Du_a\|_{L^{2;q}(B_1(0))}^2 + \|Du_f\|_{L^{2;q}(B_1(0))}^2. 
\end{align*}
By the Poincar\'{e} inequality for single-valued functions, $\|u_a - \ell\|_{L^2(B_1(0))} \leq C \|Du_a\|_{L^2(B_1(0))}$ for some $C = C(n) \in (0,\infty)$, so it suffices to suppose $u$ is average-free. 

Suppose that for every integer $j \geq 1$ there are average-free $u^{(j)} \in W^{1,2;q}(B_1(0))$ such that 
\begin{equation*} 
	\|u^{(j)}\|_{L^{2;q}(B_1(0))} > j \|Du^{(j)}\|_{L^{2;q}(B_1(0))}. 
\end{equation*} 
By scaling we may suppose $\|u^{(j)}\|_{L^{2;q}(B_1(0))} = 1$ so that 
\begin{equation*}
	\|u^{(j)}\|_{L^{2;q}(B_1(0))} = 1, \quad \|Du^{(j)}\|_{L^{2;q}(B_1(0))} < 1/j.  
\end{equation*} 
By Rellich's lemma, after passing to a subsequence $u^{(j)}$ converges in $L^{2;q}(B_1(0))$ to some average-free $u = (u_1,u_2,\ldots,u_q) \in W^{1,2;q}(B_1(0))$ such that $\|u\|_{L^{2;q}(B_1(0))} = 1$ and $\|Du\|_{L^{2;q}(B_1(0))} = 0$.  Since $Du_l = 0$ a.e. in $B_1(0)$ for $l = 1,2,\ldots,q$ and $u \in W^{1,2;q}(B_1(0))$, $u_l$ all equal the same constant functions on $B_1(0)$.  Since $u$ is average free, $u_l = 0$ a.e. on $B_1(0)$ for $l = 1,2,\ldots,q$, contradicting $\|u\|_{L^{2;q}(B_1(0))} = 1$. 
\end{proof}

\begin{lemma} \label{DgNM_Holder_thm}
Let $R_0 > 0$.  Let $u \in W^{1,2;q}(B_{R_0}(0))$, $a^{ij}, b^i, c^j, d \in L^{\infty;q}(B_{R_0}(0))$, and $f^j \in L^{s;q}(B_{R_0}(0))$ and $g \in L^{s/2;q}(B_{R_0}(0))$ for $s > n$ such that 
\begin{equation*}
	\int_{B_{R_0}(0)} \sum_{l=1}^q \left( \left( a_l^{ij} D_j u_l + b^i u_l \right) D_i \zeta_l - \left( c_l^j D_j u + d_l u_l \right) \zeta_l \right) 
	= \int_{B_{R_0}(0)} \sum_{l=1}^q \left( f_l^i D_i \zeta_l - g_l \zeta_l \right)
\end{equation*}
for all $\zeta \in C_c^{1;q}(B_{R_0}(0) \setminus \{0\} \times \mathbb{R}^{n-2})$.  Suppose (\ref{ellipticity}) holds for some constant $\lambda > 0$ and 
\begin{equation*}
	\sup_{B_{R_0}(0)} |a^{ij}| \leq \Lambda, \quad R_0 \sup_{B_{R_0}(0)} |b^i| + R_0 \sup_{B_{R_0}(0)} |c^j| + R_0^2 \sup_{B_{R_0}(0)} |d| \leq \nu,
\end{equation*}
for some constants $\Lambda, \nu > 0$.  Then for some constants $\mu \in (0,1/q)$ and $C \in (0,\infty)$ depending on $n$, $q$, $s$, $\lambda$, $\Lambda$, and $\nu$, $u$ is equal to an element of $C^{0,\mu;q}(B_{R_0/2}(0))$ a.e. in $B_{R_0/2}(0)$ and, taking $u$ to be in $C^{0,\mu;q}(B_{R_0/2}(0))$, 
\begin{equation*} 
	R_0^{\mu} [u]_{\mu;q,B_{R_0/2}(0)} \leq C \left( \sup_{B_{R_0}(0)} |u| + R^{1-n/s} \|f\|_{L^{s;q}(B_{R_0}(0))} 
	+ R^{2-2n/s} \|g\|_{L^{s/2;q}(B_{R_0}(0))} \right) . 
\end{equation*}
\end{lemma}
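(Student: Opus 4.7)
My plan is to extend the proof of the single-valued De Giorgi--Nash--Moser H\"{o}lder estimate \cite[Theorem 8.22]{GT} to the multivalued setting by running the same Moser iteration with the multivalued Sobolev and Poincar\'{e} inequalities (Lemmas \ref{sololevlemma} and \ref{poincarelemma}) as substitutes for their single-valued versions. The starting point is a Caccioppoli-type estimate: for a cutoff $\eta \in C_c^1(B_{R_0}(0))$ and a level $k \in \mathbb{R}$, I would insert the test function $\zeta = ((u_1-k)_{+}\eta^2, \ldots, (u_q-k)_{+}\eta^2)$ into the weak form of the equation. The discussion preceding Lemma \ref{sololevlemma} shows that such $\zeta$ are admissible even though they need not vanish near the axis $\{0\}\times\mathbb{R}^{n-2}$, and together with the ellipticity condition \eqref{ellipticity} and the $L^\infty$ bounds on the lower-order coefficients this yields the familiar energy bound on $D((u_l-k)_+\eta)$ in terms of lower-order quantities, with the forcing terms $f^j$ and $g$ absorbed using their $L^s$/$L^{s/2}$ norms via H\"{o}lder's inequality.

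Given the Caccioppoli inequality, Moser iteration combined with the multivalued Sobolev inequality (Lemma \ref{sololevlemma}) produces a local $L^\infty$--$L^2$ estimate on the truncations $(u_l-k)_\pm$, and the corresponding argument for non-negative supersolutions yields a weak Harnack inequality. The only place where the standard single-valued proof makes essential use of Poincar\'{e} is in deriving the $\log u$--BMO estimate needed for the weak Harnack; in our setting I would replace it by Lemma \ref{poincarelemma} applied to balls $B_r(Z)$ centered at points $Z \in \{0\} \times \mathbb{R}^{n-2}$, which is available by translation invariance of the inequality in $y$. Applying the weak Harnack to the non-negative supersolutions $M(r) - u$ and $u - m(r)$, where $M(r) = \sup_{B_r(Z)} u$ and $m(r) = \inf_{B_r(Z)} u$ are defined as in \eqref{supinf}, then produces an oscillation decay $\op{osc}_{B_{r/2}(Z)} u \leq \theta \, \op{osc}_{B_r(Z)} u + C r^{\delta}$ for some $\theta \in (0,1)$ and $\delta > 0$, from which H\"{o}lder continuity at axis points follows by standard iteration, giving a H\"{o}lder exponent $\mu \in (0,1/q)$.

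The main obstacle is that Lemma \ref{poincarelemma} fails on balls disjoint from $\{0\} \times \mathbb{R}^{n-2}$ (as the remark following the lemma emphasizes), so the argument above only directly works for balls centered on the axis. To obtain a global estimate on $B_{R_0/2}(0)$ I would handle any ball $B_r(Y) \subseteq B_{R_0/2}(0)$ with $\overline{B_r(Y)} \cap \{0\}\times\mathbb{R}^{n-2} = \emptyset$ separately: on such a ball the components $u_l$ are genuinely single-valued $W^{1,2}$ weak solutions of a uniformly elliptic divergence-form equation, so the classical estimate \cite[Theorem 8.22]{GT} applies to each $u_l$ individually. To bound $|u(X) - u(Y)|$ for an arbitrary pair $X, Y \in B_{R_0/2}(0)$ I would then chain axis-centered balls (to which the multivalued oscillation decay applies) with off-axis balls (to which the single-valued estimate applies), choosing their radii comparable to the distance from their centers to the axis, and sum up the resulting H\"{o}lder increments in the standard way to conclude.
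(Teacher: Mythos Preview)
Your proposal is correct and follows essentially the same route as the paper: establish a weak Harnack inequality on axis-centered balls via Moser iteration using the multivalued Sobolev and Poincar\'{e} inequalities (Lemmas~\ref{sololevlemma} and~\ref{poincarelemma}), deduce oscillation decay there, invoke the single-valued estimate \cite[Theorem~8.22]{GT} on balls away from the axis, and combine by a case analysis on the pair $X_1,X_2$ just as you describe. The one place where the paper is more specific than your sketch is the passage from the $\log \bar u$ estimate to the $L^p$--$L^{-p}$ bound: rather than appeal to a John--Nirenberg inequality (whose extension to the $C^{0;q}$ setting is not immediate), the paper bounds the moments $\int \sum_l |w_l-\ell|^k/k^k$ of $w_l=\log \bar u_l$ directly by iterating the Caccioppoli and Poincar\'{e} inequalities, following the argument in \cite[Theorem~4.15]{HanLin}.
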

\begin{proof}
First we show that if $B_{4R}(0,y_0) \subset B_{R_0}(0)$ and $\hat u \in W^{1,2;q}(B_{4R}(0,y_0))$, $\hat f^j \in L^{s;q}(B_{4R}(0,y_0))$, and $\hat g \in L^{s/2;q}(B_{4R}(0,y_0))$ for $s > n$ such that $\hat u_l \geq 0$ for $l = 1,2,\ldots,q$ and 
\begin{equation} \label{DgNM_eqn1}
	\int_{B_{4R}(0,y_0)} \sum_{l=1}^q \left( \left( a_l^{ij} D_j \hat u_l + b^i \hat u_l \right) D_i \zeta_l 
		- \left( c_l^j D_j \hat u + d_l \hat u_l \right) \zeta_l \right) 
		\geq \int_{B_{4R}(0,y_0)} \sum_{l=1}^q \left( \hat f_l^i D_i \zeta_l - \hat g_l \zeta_l \right)
\end{equation}
for all $\zeta \in C_c^{1;q}(B_{4R}(0,y_0) \setminus \{0\} \times \mathbb{R}^{n-2})$ such that $\zeta_l \geq 0$ for $l = 1,2,\ldots,q$ then 
\begin{equation} \label{DgNM_eqn2}
	R^{-n} \int_{B_{2R}(0,y_0)} \sum_{l=1}^q \hat u_l 
	\leq C \left( \inf_{B_R(0,y_0)} \hat u + R^{1-n/s} \|\hat f\|_{L^{s;q}(B_{4R}(0,y_0))} + R^{2-2n/s} \|\hat g\|_{L^{s/2;q}(B_{4R}(0,y_0))} \right)
\end{equation}
for some constant $C = C(n,q,s,\Lambda/\lambda, \nu/\lambda) \in (0,\infty)$.  Translate and rescale so that $y_0 = 0$ and $R = 1$.  (\ref{DgNM_eqn2}) follows from standard arguments using (\ref{DgNM_eqn1}) and Sobolev inequality Lemma \ref{sololevlemma} such as the proof of Theorem 8.18 of~\cite{GT} except to prove 
\begin{equation*}
	\left( \int_{B_3(0)} \sum_{l=1}^q \bar u_l^p \right) \left( \int_{B_3(0)} \sum_{l=1}^q \bar u_l^{-p} \right) \leq C 
	\quad \text{where } \bar u_l = \hat u_l + \lambda^{-1} (\|\hat f\|_{L^{s;q}(B_4(0))} + \|\hat g\|_{L^{s/2;q}(C^0(B_4(0))})
\end{equation*}
for $p \in (0,1)$ and some constant $C = C(n,q,p,s,\Lambda/\lambda, \nu/\lambda) \in (0,\infty)$ we use a standard argument that uses (\ref{DgNM_eqn1}), Sobolev inequality Lemma \ref{sololevlemma}, and Poincar\'{e} inequality Lemma \ref{poincarelemma} to bound the integrals of $|w-\ell|^k/k^k$, where $w_l = \log(\bar u_l)$, for large integers $k$ for some $\ell \in \mathbb{R}$ and that avoids using the John-Nirenberg inequality (see the proof of Theorem 4.15 of~\cite{HanLin}). 

Rescale so that $R_0 = 1$.  Arguing as in the proof of Theorem 8.22 of~\cite{GT}, replacing the weak Harnack inequality~\cite[Theorem 8.18]{GT} with (\ref{DgNM_eqn2}), we obtain 
\begin{equation} \label{DgNMH_tv} 
	\op{osc}_{B_R(0,y_0)} u \leq C R^{\mu} \left( \sup_{B_{1/2}(0,y_0)} u + K \right) \leq C R^{\mu} \left( \sup_{B_1(0)} u + K \right) , 
\end{equation}
for all $y_0 \in B^{n-2}_{1/2}(0)$, $R \in (0,1/2]$ and for some constants $\mu \in (0,1/q)$ and $C \in (0,\infty)$ depending on $n$, $q$, $s$, $\Lambda/\lambda$, and $\nu/\lambda$, where 
\begin{gather*}
	\op{osc}_{B_R(0,y_0)} u = \sup_{B_R(0,y_0)} u - \inf_{B_R(0,y_0)} u, \\
	K = \lambda^{-1} (\|f\|_{L^{s;q}(B_1(0))} + \|g\|_{L^{s/2;q}(B_1(0))}). 
\end{gather*}

We want to bound $[u_l]_{\mu;B_{1/2}(0) \cap \mathbb{R} \times (0,\infty) \times \mathbb{R}^{n-2}}$ for $l \in \{1,2,\ldots,q\}$ by showing that if $X_1 = (x_1,y_1)$ and $X_2 = (x_2,y_2)$ are distinct points in $B_{1/2}(0) \cap \mathbb{R} \times (0,\infty) \times \mathbb{R}^{n-2}$ then 
\begin{equation} \label{DgNMH_eqn2} 
	|u_l(X_1) - u_l(X_2)| \leq C |X_1-X_2|^{\mu} \left( \sup_{B_1(0)} |u| + K \right) 
\end{equation}
for some constant $C = C(n,q,s,\Lambda/\lambda,\nu/\lambda) \in (0,\infty)$.  Assume $|x_1| \leq |x_2|$.  We consider four cases: (a) $|X_1-X_2| < |x_2|/2$, (b) $|x_2|/2 \leq |X_1-X_2| \leq |x_2|$, (c) $x_1 = x_2 = 0$, and (d) $|X_1-X_2| > |x_2| > 0$.  In case (a) (\ref{DgNMH_eqn2}) follows by using the H\"{o}lder continuity estimates for single-valued functions~\cite[Theorem 8.22]{GT} to bound $[u]_{\mu;B_{|X_1-X_2|}(X_2)}$, replacing $\mu$ with a smaller value if necessary.  In case (b) (\ref{DgNMH_eqn2}) follows by using (\ref{DgNMH_tv}) to bound $\op{osc}_{B_{2|x_2|}(0,y_2)} u$ if $|x_2| \leq 1/4$ and (\ref{DgNMH_eqn2}) is obvious if $|x_2| > 1/4$.  In case (c) (\ref{DgNMH_eqn2}) follows by using (\ref{DgNMH_tv}) to bound $\op{osc}_{B_{|X_1-X_2|/2}(0,\frac{y_1+y_2}{2})} u$.  In case (d) (\ref{DgNMH_eqn2})  follows from cases (b) and (c) and the triangle inequality: 
\begin{align*}
	|u_l(X_1) - u_l(X_2)| &\leq |u_l(X_1) - u_l(0,y_1)| + |u_l(0,y_1) - u_l(0,y_2)| + |u_l(0,y_2) - u_l(X_2)| \\
	&\leq 3C |X_1-X_2|^{\mu} \left( \sup_{B_1(0)} |u| + K \right) . 
\end{align*}
Similarly we can bound $[u_l]_{\mu,B_{1/2}(0) \cap \mathbb{R} \times (-\infty,0) \times \mathbb{R}^{n-2}}$ by proving (\ref{DgNMH_eqn2}) when $X_1$ and $X_2$ are instead distinct points in $B_{1/2}(0) \cap \mathbb{R} \times (-\infty,0) \times \mathbb{R}^{n-2}$
\end{proof}

\begin{lemma} \label{schauder_div}
Let $\mu \in (0,1/q)$ and $B_R(X_0) \subseteq \mathbb{R}^n$.  Suppose $u \in C^{1,\mu;q}(B_R(X_0))$, $a^{ij}, b^i, f^i \in C^{0,\mu;q}(B_R(X_0))$, and $c^j, d, g \in C^{0;q}(B_R(X_0))$ satisfy 
\begin{equation} \label{sdiv_eqn1}
	\int_{B_R(X_0)} \sum_{l=1}^q \left( \left( a_l^{ij} D_j u_l + b_l^i u_l \right) D_i \zeta_l - \left( c_l^j D_j u + d_l u_l \right) \zeta_l \right) 
	= \int_{B_R(X_0)} \sum_{l=1}^q \left( f_l^i D_i \zeta_l - g_l \zeta_l \right)
\end{equation}
for all $\zeta \in C_c^{1;q}(B_R(X_0) \setminus \{0\} \times \mathbb{R}^{n-2})$.  Suppose (\ref{ellipticity}) holds for some constant $\lambda > 0$ and 
\begin{equation} \label{sdiv_eqn2}
	\|a^{ij}\|'_{C^{0,\mu;q}(B_R(X_0))} \leq \Lambda, 
	\quad R \|b^i\|'_{C^{0,\mu;q}(B_R(X_0))} + R \sup_{B_R(X_0)} |c^j| + R^2 \sup_{B_R(X_0)} |d| \leq \nu,
\end{equation} 
for some constants $\Lambda, \nu > 0$.  Then 
\begin{equation} \label{sdiv_eqn3}
	\|u\|'_{C^{1,\mu;q}(B_{R/2}(X_0))} \leq C \left( \sup_{B_R(X_0)} |u| + R^{1+\mu} [f]_{\mu;q,B_R(X_0)} + R^2 \sup_{B_R(X_0)} |g| \right) 
\end{equation}
for some constant $C = C(n,q,\mu,\lambda,\Lambda,\nu) \in (0,\infty)$.
\end{lemma}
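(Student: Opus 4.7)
I plan to adapt the blow-up/compactness/contradiction argument used in the proof of Lemma \ref{schauder_strong}, reducing ultimately to a contradiction with the Liouville-type result Lemma \ref{gap_lemma}. After translating and rescaling to the case $X_0=0$, $R=1$, standard interpolation shows that (\ref{sdiv_eqn3}) follows from the weaker estimate: for every $\delta>0$,
\begin{equation*}
[Du]_{\mu;q,B_{1/2}(0)} \leq \delta\, [Du]_{\mu;q,B_1(0)} + C_\delta\Bigl(\sup_{B_1(0)}|u| + \sup_{B_1(0)}|Du| + [f]_{\mu;q,B_1(0)} + \sup_{B_1(0)}|g|\Bigr),
\end{equation*}
applied on every concentric ball and iterated to absorb the $\delta$-term, exactly as in the reduction at the start of the proof of Lemma \ref{schauder_strong}.

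Suppose the weakened estimate fails. Then I get a sequence of balls $B_1(X_k)$, solutions $u_k$ of (\ref{sdiv_eqn1}) with coefficients $a_k^{ij}, b_k^i, c_k^j, d_k$ satisfying (\ref{ellipticity}) and (\ref{sdiv_eqn2}), and data $f_k^i, g_k$, violating the inequality. Choosing $Y_k,Y_k'$ in $B_{1/2}(X_k)\cap\mathbb{R}\times(0,\infty)\times\mathbb{R}^{n-2}$ that realize a fixed fraction of $[Du_k]_{\mu;q}$ and setting $\rho_k=|Y_k-Y_k'|$, the same computation as in Lemma \ref{schauder_strong} forces $\rho_k\to 0$. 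I then split into cases (a) $\operatorname{dist}(\{Y_k,Y_k'\},\{0\}\times\mathbb{R}^{n-2})/\rho_k$ remains bounded and (b) this ratio blows up, performing in each case the rescaling
\begin{equation*}
\hat u_k(X) = \rho_k^{-1-\mu}\,[Du_k]_{\mu;q,B_1(X_k)}^{-1}\bigl(u_k(Z_k+\rho_k X) - u_k(Z_k) - Du_k(Z_k)\cdot \rho_k X\bigr)
\end{equation*}
for a suitable basepoint $Z_k$ (on the axis in case (a), at $Y_k'$ in case (b)), and correspondingly defining rescaled coefficients $\hat a_k^{ij}(X)=a_k^{ij}(Z_k+\rho_k X)$, $\hat b_k^i(X)=\rho_k b_k^i(\cdots)$, $\hat c_k^j(X)=\rho_k c_k^j(\cdots)$, $\hat d_k(X)=\rho_k^2 d_k(\cdots)$, with analogous rescalings for $\hat f_k^i,\hat g_k$. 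Substituting the affine part of $u_k$ contributes only extra terms proportional to $|u_k(Z_k)|+|Du_k(Z_k)|$ on the right-hand side, which are harmless because the contradiction hypothesis already forces $\sup|u_k|+\sup|Du_k|\to 0$.

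By (\ref{sdiv_eqn2}) the rescaled lower-order coefficients $\hat b_k^i$, $\hat c_k^j$, $\hat d_k$ tend uniformly to zero on compact sets, and by the contradiction hypothesis $\hat f_k\to 0$ in $C^{0,\mu;q}_{\text{loc}}$ and $\hat g_k\to 0$ in $C^{0;q}_{\text{loc}}$. The normalization $[D\hat u_k]_{\mu;q}\leq 1$ together with Arzel\`a--Ascoli on each half-space $\mathbb{R}\times(\pm,0)\times\mathbb{R}^{n-2}$ yields a subsequential limit $\hat u\in C^{1,\mu;q}(\mathbb{R}^n)$, while $\hat a_k^{ij}$ converges on compacts to a constant matrix $\hat a^{ij}$. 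Upgrading the convergence to $C^{1;q}$ off $\{0\}\times\mathbb{R}^{n-2}$ by interior $C^{1,\mu}$ Schauder estimates for single-valued divergence-form equations and passing to the limit in the weak form of the rescaled equation, $\hat u$ solves $\hat a^{ij} D_{ij}\hat u_l=0$ away from the axis with $[D\hat u]_{\mu;q,\mathbb{R}^n}\leq 1$ and a surviving lower bound $|D\hat u_1(\zeta)-D\hat u_1(\zeta')|\geq \delta/(4q)$. In case (a) this contradicts Lemma \ref{gap_lemma} after an affine change of variables; in case (b) it contradicts the classical Liouville theorem for single-valued harmonic functions.

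The main obstacle is ensuring that the lower-order terms genuinely disappear under the blow-up, so that the limit equation is purely the principal part to which Lemma \ref{gap_lemma} applies. This reduces to the scaling computation above, where the hypothesis (\ref{sdiv_eqn2}) is exactly tuned so that $b_k^i,c_k^j$ (of size $O(R^{-1})$) and $d_k$ (of size $O(R^{-2})$) pick up respective factors $\rho_k,\rho_k,\rho_k^2$ and hence vanish. A secondary technical point is that in case (a), where the blow-up concentrates on the singular axis, the rescaled domain inflates to all of $\mathbb{R}^n$ while $\hat u_k$ remains defined on $\mathbb{R}^n\setminus\{0\}\times\mathbb{R}^{n-2}$ with a uniform $C^{1,\mu;q}$ bound, so Arzel\`a--Ascoli must be applied separately on each half-space exactly as in Lemma \ref{schauder_strong}; the limit then lies in $C^{1,\mu;q}(\mathbb{R}^n)$, which is precisely the hypothesis of Lemma \ref{gap_lemma}.
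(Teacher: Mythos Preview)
Your approach is correct, but it differs from the paper's. The paper proceeds in two steps: first it establishes (\ref{sdiv_eqn3}) in the special case where the $a^{ij}_l$ are all equal to the same constant and $b^i=c^j=d=0$, by the blow-up/contradiction argument of Lemma~\ref{schauder_strong} (noting that here only $C^{1;q}$ convergence is needed to pass to the limit in the weak equation, not $C^{2;q}$). Then, for the general case, it uses a standard coefficient-freezing argument: on any small ball $B_r(X)\subset B_R(X_0)$ one freezes $a^{ij}$ at a point $Z$ (on the axis if $B_r(X)$ meets it, otherwise one works single-valued) and rewrites (\ref{sdiv_eqn1}) as a constant-coefficient equation with right-hand side involving $(a^{ij}(Z)-a^{ij}_l)D_j u_l$, $b^i_l u_l$, $c^j_l D_j u_l$, $d_l u_l$. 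Applying the constant-coefficient case gives an inequality like (\ref{sdiv_eqn4}); taking $r\le\varepsilon R$ small absorbs the $(r/R)^\mu[Du]_\mu$ term, and interpolation finishes.

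Your route is a direct blow-up on the full variable-coefficient equation with lower-order terms: under the rescaling the Hölder oscillation of $\hat a_k^{ij}$ vanishes (so it converges to a constant), the rescaled $\hat b_k^i,\hat c_k^j,\hat d_k$ pick up factors $\rho_k,\rho_k,\rho_k^2$ and disappear, and the affine subtraction injects extra right-hand side terms of size $[a^{ij}]_\mu\sup|Du_k|+\|b^i\|_{C^{0,\mu}}\sup(|u_k|+|Du_k|)+\ldots$, all of which are $o([Du_k]_{\mu;q})$ by the contradiction hypothesis. The limit is then exactly the constant-coefficient situation to which Lemma~\ref{gap_lemma} (or classical Liouville in case (b)) applies. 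Your remark about upgrading convergence via single-valued interior Schauder is actually unnecessary: $[D\hat u_k]_{\mu;q}\le 1$ together with $\hat u_k(0)=0$, $D\hat u_k(0)=0$ already yields locally uniform $C^{1;q}$ bounds, and $C^{1;q}$ convergence suffices to pass to the limit in the weak form.

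The paper's two-step structure is more modular (the freezing step is short and standard once the constant-coefficient case is in hand), while yours is a single argument but with more bookkeeping to verify that every term behaves correctly under the blow-up.
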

\begin{proof}
First observe that Lemma \ref{schauder_div} holds true in the special case where $a^{ij}_l$ all equal the same constant function, $b^i = 0$, $c^i = 0$, and $d = 0$ by a scaling argument similar to the proof of Lemma \ref{schauder_strong}.  (Note that unlike in the proof of Lemma \ref{schauder_strong}, we do not need to show that after passing to a subsequence $\hat u_k \rightarrow \bar u$ in $C^{2;q}(\Omega)$ for $\Omega \subset \subset \mathbb{R}^n \setminus \{0\} \times \mathbb{R}^{n-2}$.) 

Next we prove Lemma \ref{schauder_div} in general.  Consider any $B_r(X) \subseteq B_R(X_0)$.  Suppose $\{0\} \times \mathbb{R}^{n-2} \cap B_r(X) \neq \emptyset$ and let $Z \in \{0\} \times \mathbb{R}^{n-2} \cap B_r(X)$.  By (\ref{sdiv_eqn1}), 
\begin{equation*}
	a^{ij}(Z) D_{ij} u_l = D_i ((a^{ij}(Z) - a^{ij}_l) D_j u_l - b_l^i u_l + f_l^i) - c_l^j D_j u_l - d_l u_l + g_l \quad \text{in } B_r(X) \setminus \{0\} \times \mathbb{R}^{n-2}. 
\end{equation*}
By Lemma \ref{schauder_div} for the operator $a^{ij}(Z) D_{ij}$ and (\ref{sdiv_eqn2}), 
\begin{align} \label{sdiv_eqn4} 
	r^{1+\mu} [Du]_{\mu;q,B_{r/2}(X)} \leq C \left( \frac{r^{1+2\mu}}{R^{\mu}} [Du]_{\mu, B_r(X)}  + \|u\|'_{C^{1;q}(B_R(X_0))} 
		+ R^{1+\mu} [f]_{\mu,B_R(X_0)} + R^2 \sup_{B_R(X_0)} |g| \right) 
\end{align} 
for some constant $C = C(n,q,\mu,\lambda,\Lambda,\nu) \in (0,\infty)$.  If instead $\{0\} \times \mathbb{R}^{n-2} \cap B_r(X) = \emptyset$, then (\ref{sdiv_eqn4}) holds by the Schauder estimates for single-valued functions.  By (\ref{sdiv_eqn4}) with $r \leq \varepsilon R$ for $\varepsilon = \varepsilon(n,\mu,\lambda,\Lambda,\nu) > 0$ sufficiently small and by interpolation, we obtain (\ref{sdiv_eqn3}). 
\end{proof}

By slightly modifying the proof of Lemma \ref{schauder_div} we obtain the following Schauder estimate for linear systems, whose proof we omit.

\begin{lemma} \label{schauder_system}
Let $\mu \in (0,1/q)$ and $B_R(X_0) \subseteq \mathbb{R}^n$.  Suppose $u = (u_1,u_2,\ldots,u_q) \in C^{1,\mu;q}(B_R(X_0);\mathbb{R}^m)$, $a_{\kappa \lambda}^{ij} = (a^{ij}_{\kappa \lambda,1},a^{ij}_{\kappa \lambda,2},\ldots,a^{ij}_{\kappa \lambda,q}), b_{\kappa \lambda}^i = (b_{\kappa \lambda,1}^i,b_{\kappa \lambda,2}^i,\ldots,b_{\kappa \lambda,q}^i), f_{\kappa}^i = (f_{\kappa,1}^i,f_{\kappa,2}^i,\ldots,f_{\kappa,q}^i) \in C^{0,\mu;q}(B_R(X_0))$, and $c_{\kappa \lambda} = (c^j_{\kappa \lambda,1},c^j_{\kappa \lambda,2},\ldots,c^j_{\kappa \lambda,q}), d_{\kappa \lambda} = (d_{\kappa \lambda,1},d_{\kappa \lambda,2},\ldots,d_{\kappa \lambda,q}), g_{\kappa} = (g_{\kappa,1},g_{\kappa,2},\ldots,g_{\kappa,q}) \in C^{0;q}(B_R(X_0))$ satisfy 
\begin{equation*}
	\int_{B_R(X_0)} \sum_{l=1}^q \left( \left( a_{\kappa \lambda,l}^{ij} D_j u_l^{\lambda} + b_{\kappa \lambda,l}^i u_l^{\lambda} \right) D_i \zeta_l^{\kappa} 
	- \left( c_{\kappa \lambda,l}^j D_j u^{\lambda} + d_{\kappa \lambda,l} u_l^{\lambda} \right) \zeta_l^{\kappa} \right) 
	= \int_{B_R(X_0)} \sum_{l=1}^q \left( f_{\kappa,l}^i D_i \zeta_l^{\kappa} - g_{\kappa,l} \zeta_l^{\kappa} \right)
\end{equation*}
for all $\zeta = (\zeta_1,\zeta_2,\ldots,\zeta_q) \in C_c^{1;q}(B_R(X_0) \setminus \{0\} \times \mathbb{R}^{n-2};\mathbb{R}^m)$.  Suppose 
\begin{equation*}
	R^{\mu} [a_{\kappa \lambda}^{ij}]_{\mu;q,B_R(X_0)} + R \|b_{\kappa \lambda}^i\|'_{C^{0,\mu;q}(B_R(X_0))} 
	+ R \sup_{B_R(X_0)} |c_{\kappa \lambda}^j| + R^2 \sup_{B_R(X_0)} |d_{\kappa \lambda}| \leq \nu,
\end{equation*} 
for some constants $\Lambda, \nu > 0$.  For some $\varepsilon = \varepsilon(n,m,\mu,\nu) > 0$, if 
\begin{equation*}
	\|a_{\kappa \lambda}^{ij} - \delta^{ij} \delta_{\kappa \lambda}\|_{C^0(B_R(X_0))} \leq \varepsilon, 
\end{equation*} 
where $\delta^{ij}$ and $\delta_{\kappa \lambda}$ denote Kronecker deltas, then 
\begin{equation*}
	\|u\|'_{C^{1,\mu;q}(B_{R/2}(X_0))} \leq C \left( \sup_{B_R(X_0)} |u| + R^{1+\mu} [f]_{\mu;q,B_R(X_0)} + R^2 \sup_{B_R(X_0)} |g| \right) 
\end{equation*}
for some constant $C = C(n,m,q,\mu,\nu) \in (0,\infty)$.
\end{lemma}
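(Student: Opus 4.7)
The plan is to adapt the proof of Lemma \ref{schauder_div} to the vector-valued setting, using the smallness hypothesis $\|a_{\kappa\lambda}^{ij} - \delta^{ij}\delta_{\kappa\lambda}\|_{C^0(B_R(X_0))} \leq \varepsilon$ to reduce to a controlled perturbation of the decoupled Laplacian. First I would establish the constant-coefficient model case, in which $a_{\kappa\lambda,l}^{ij} = \bar a_{\kappa\lambda}^{ij}$ is a fixed constant matrix (independent of $l$ and of $(x,y)$) with $\|\bar a - I\|_\infty \leq \varepsilon$, and $b_{\kappa\lambda}^i = c_{\kappa\lambda}^j = d_{\kappa\lambda} = 0$, via the same blow-up scheme as in Lemma \ref{schauder_strong}: a failure of the $[Du]_{\tau;q}$ bound produces, after rescaling and subsequential extraction, a limit $\hat u \in C^{1,\tau;q}(\mathbb{R}^n;\mathbb{R}^m)$ satisfying $\bar a^{ij}_{\kappa\lambda} D_{ij}\hat u^\lambda = 0$ on $\mathbb{R}^n \setminus \{0\}\times\mathbb{R}^{n-2}$, $[D\hat u]_{\tau;q,\mathbb{R}^n} \leq 1$, and $|D\hat u_1(\zeta) - D\hat u_1(\zeta')| \geq \delta/4q$ for some points $\zeta,\zeta'$. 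Depending on whether the blow-up centers recede from or concentrate on the axis $\{0\}\times\mathbb{R}^{n-2}$, this contradicts either the classical Liouville theorem for constant-coefficient elliptic systems on $\mathbb{R}^n$, or a vector-valued analogue of Lemma \ref{gap_lemma}.

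The vector-valued gap lemma, which is the key auxiliary result, asserts that for $\varepsilon$ sufficiently small (depending on $n,m,q,\mu$), any $\hat u \in C^{1,\mu;q}(\mathbb{R}^n;\mathbb{R}^m)$ solving $\bar a^{ij}_{\kappa\lambda} D_{ij}\hat u^\lambda = 0$ on $\mathbb{R}^n \setminus \{0\}\times\mathbb{R}^{n-2}$ with $[D\hat u]_{\mu;q,\mathbb{R}^n} < \infty$ and $\|\bar a - I\|_\infty \leq \varepsilon$ is affine, with $\hat u_l(X) = a + bX$ for some $a \in \mathbb{R}^m$ and $m \times n$ matrix $b$ independent of $l$. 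Splitting $\hat u = \hat u_a + \hat u_f$, the average $\hat u_a$ extends smoothly across the axis and is affine by the standard Liouville theorem applied componentwise to the constant-coefficient system. For the average-free part $\hat u_f$ I would mimic the frequency-function argument of Lemma \ref{gap_lemma}, forming the frequency quotient with respect to the decoupled principal part $\Delta$ and treating $\bar a^{ij}_{\kappa\lambda} - \delta^{ij}\delta_{\kappa\lambda}$ as an $\varepsilon$-small error in the monotonicity identity. The classification of average-free homogeneous degree-$\sigma$ solutions for $\sigma \in [1, 1+1/q)$ then follows, for $\varepsilon$ small, from the same dimension-reduction argument used in the proof of Theorem 4.1 of~\cite{SW2} together with the fact that $D\hat u$ vanishes on $\{0\}\times\mathbb{R}^{n-2}$, and yields only the zero solution.

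With the constant-coefficient estimate in hand, the passage to variable coefficients is routine and mirrors the proof of Lemma \ref{schauder_div}. On each ball $B_r(X) \subseteq B_R(X_0)$ with $r \leq \varepsilon' R$, I would freeze the coefficients at a point $Z \in B_r(X)$ (chosen in $\{0\}\times\mathbb{R}^{n-2} \cap B_r(X)$ when this set is nonempty), rewrite the system as $\bar a^{ij}_{\kappa\lambda}(Z) D_{ij} u^\lambda = F$ with $F$ built from the coefficient oscillation, lower-order terms, and data, apply the constant-coefficient estimate on $B_{r/2}(X)$, and use the bound $R^\mu [a_{\kappa\lambda}^{ij}]_{\mu;q,B_R(X_0)} \leq \nu$ together with standard interpolation between $C^0$, $C^1$, and $C^{1,\mu}$ seminorms to absorb the $[Du]_{\mu;q}$ contribution on the right-hand side.

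The main obstacle is the vector-valued gap lemma. In the scalar case the classification of degree-$1$ homogeneous average-free $q$-valued harmonic functions is particularly rigid: any such solution has constant derivative and hence vanishes. Constant-coefficient elliptic systems in general admit a richer family of degree-$1$ homogeneous solutions coupling distinct components, and ruling these out for the perturbed system requires genuine smallness of $\|\bar a - I\|_\infty$; this is precisely what dictates the quantitative choice of $\varepsilon(n,m,\mu,\nu)$ in the statement of the lemma.
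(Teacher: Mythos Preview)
The paper omits the proof, calling it a ``slight modification'' of Lemma~\ref{schauder_div}. Your proposal would likely work but is substantially more involved than what is intended, and in particular the vector-valued gap lemma you identify as the main obstacle is not needed at all.

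The $\varepsilon$-smallness hypothesis on $\|a_{\kappa\lambda}^{ij} - \delta^{ij}\delta_{\kappa\lambda}\|_{C^0}$ is there precisely so that one can reduce directly to the scalar case already proved. For each component index $\kappa$, rewrite the system with the decoupled Laplacian as principal part,
\begin{equation*}
D_i(D_i u_l^\kappa) = D_i\big( (\delta^{ij}\delta_{\kappa\lambda} - a_{\kappa\lambda,l}^{ij}) D_j u_l^\lambda - b_{\kappa\lambda,l}^i u_l^\lambda + f_{\kappa,l}^i \big) - c_{\kappa\lambda,l}^j D_j u_l^\lambda - d_{\kappa\lambda,l} u_l^\lambda + g_{\kappa,l}
\end{equation*}
weakly on each $B_r(X)\subseteq B_R(X_0)$, and apply the scalar Lemma~\ref{schauder_div} (with $a^{ij}=\delta^{ij}$, $b^i=c^j=d=0$) to $u^\kappa$. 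Since
\begin{equation*}
r^{1+\mu}\big[(\delta^{ij}\delta_{\kappa\lambda} - a_{\kappa\lambda,l}^{ij}) D_j u_l^\lambda\big]_{\mu;q,B_r(X)} \leq \varepsilon\, r^{1+\mu}[Du]_{\mu;q,B_r(X)} + C\nu\,(r/R)^\mu\, r\sup_{B_r(X)}|Du|,
\end{equation*}
summing over $\kappa$ yields an inequality of exactly the shape of (\ref{sdiv_eqn4}), but with the small factor multiplying $[Du]_{\mu;q,B_r(X)}$ on the right now equal to $C\varepsilon$ rather than $C(r/R)^\mu$. Choosing $\varepsilon$ so that $C\varepsilon<1/2$ and running the identical interpolation/absorption step finishes the proof. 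No blow-up for a coupled constant-coefficient system, and hence no vector-valued analogue of Lemma~\ref{gap_lemma}, is ever required.

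Your route---a blow-up for constant $\bar a_{\kappa\lambda}^{ij}$ close to the identity, followed by a perturbed frequency-function classification---could in principle be pushed through, but the perturbation step is genuinely delicate (the monotonicity identities underlying Lemma~\ref{gap_lemma} are specific to $\Delta$, and carrying an $\varepsilon$-error through the homogeneity classification and dimension reduction of~\cite{SW2} is real work). It would certainly not be a ``slight modification'' of anything already in the paper. The simpler argument above also makes transparent why $\varepsilon$ appears in the hypotheses: it is exactly the absorption parameter replacing the small-radius factor from the scalar proof.
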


To prove Theorem \ref{poisson_thm} and Theorem \ref{theorem1} in the case of non-periodic data we need a global estimate supremum estimate on $u$ that is independent of $\rho_1,\ldots,\rho_{n-2}$. 

\begin{lemma} \label{inhomog_poisson}
Let $\mu \in (0,1/q)$.  Suppose $u \in C^{1,\mu;q}(\overline{\mathcal{C}})$, $f^i \in C^{0,\mu;q}(\overline{\mathcal{C}})$, and $g \in C^{0;q}(\overline{\mathcal{C}})$ satisfy 
\begin{equation} \label{inhomog_poisson_eqn}
	\int_{\mathcal{C}} \sum_{l=1}^q D_i u_l D_i \zeta_l = \int_{\mathcal{C}} \sum_{l=1}^q \left( f_l^i D_i \zeta_l - g_l \zeta_l \right)
\end{equation}
for all $\zeta \in C_c^{1;q}(\mathcal{C} \setminus \{0\} \times \mathbb{R}^{n-2})$.  Then 
\begin{equation*}
	\sup_{\mathcal{C}} |u| \leq C \left( \sup_{\partial \mathcal{C}} |u| + [f]_{\mu;q,\overline{\mathcal{C}}} + \sup_{\mathcal{C}} |g| \right) 
\end{equation*}
for some constant $C = C(n,q,\mu) \in (0,\infty)$ independent of $\rho_1,\ldots,\rho_{n-2}$. 
\end{lemma}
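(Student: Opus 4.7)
The plan is to argue by contradiction and compactness, reducing the lemma to a Liouville-type uniqueness statement for bounded $q$-valued harmonic functions on $\mathcal{C}$ vanishing on $\partial \mathcal{C}$. Suppose the estimate fails: for each $k \geq 1$ there exist $u^{(k)}, f^{(k)}, g^{(k)}$ satisfying the hypotheses with $\sup_\mathcal{C}|u^{(k)}| > k V^{(k)}$, where $V^{(k)} := \sup_{\partial \mathcal{C}} |u^{(k)}| + [f^{(k)}]_{\mu;q,\overline{\mathcal{C}}} + \sup_\mathcal{C} |g^{(k)}|$. Rescaling $u^{(k)}, f^{(k)}, g^{(k)}$ by the common factor $\sup_\mathcal{C}|u^{(k)}|$, we may assume $\sup_\mathcal{C}|u^{(k)}| = 1$ and $V^{(k)} \to 0$. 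Pick $X_k = (x_k, y_k) \in \mathcal{C}$ with $\max_l |u^{(k)}_l(X_k)| \geq 1/2$; since both the equation and every quantity in $V^{(k)}$ are translation-invariant in $y$, we may take $y_k = 0$, and pass to a subsequence with $x_k \to x_* \in \overline{B^2_1(0)}$.

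Next, the interior Schauder estimate Lemma \ref{schauder_div} on balls compactly contained in $\mathcal{C}$, together with classical single-valued boundary Schauder estimates near $\partial \mathcal{C}$ (which is disjoint from the branch axis $\{0\} \times \mathbb{R}^{n-2}$), yields a uniform bound on $\{u^{(k)}\}$ in $C^{1,\mu;q}(\overline{B^2_1(0)} \times B^{n-2}_R(0))$ for every $R > 0$. By the Arzela--Ascoli-type compactness of Section \ref{sec:preliminaries}, along a diagonal subsequence $u^{(k)} \to u^*$ in $C^1$ on compact subsets of $\overline{\mathcal{C}}$, where $u^* \in C^{1,\mu;q}(\overline{\mathcal{C}})$ satisfies $|u^*| \leq 1$ on $\mathcal{C}$, $u^* \equiv 0$ on $\partial \mathcal{C}$, and $\max_l |u^*_l(x_*, 0)| \geq 1/2$. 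Since $[f^{(k)}]_{\mu;q} \to 0$ implies that $f^{(k)}$ is uniformly constant on each bounded subset in the limit (and constants drop out of (\ref{inhomog_poisson_eqn}) against compactly supported $\zeta$) and $\sup|g^{(k)}| \to 0$, the limit $u^*$ is $q$-valued weakly harmonic in $\mathcal{C} \setminus \{0\} \times \mathbb{R}^{n-2}$.

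The contradiction will follow from the Liouville statement: every bounded $u \in C^{1,\mu;q}(\overline{\mathcal{C}})$ that is weakly harmonic in $\mathcal{C} \setminus \{0\} \times \mathbb{R}^{n-2}$ and vanishes on $\partial \mathcal{C}$ is identically zero. Decompose $u = u_a + u_f$ via (\ref{averageandfree}). The average $u_a$ is bounded, single-valued, harmonic on $\mathcal{C}$, and zero on $\partial \mathcal{C}$; expanding in the Dirichlet eigenbasis $\{\phi_j\}$ of $-\Delta_x$ on $B^2_1(0)$, each coefficient $c_j(y) = \int_{B^2_1(0)} u_a(x,y)\phi_j(x)\,dx$ is bounded on $\mathbb{R}^{n-2}$ and satisfies $\Delta_y c_j = \lambda_j c_j$ with $\lambda_j > 0$; a tempered-distribution Fourier transform in $y$ then forces $c_j \equiv 0$, so $u_a \equiv 0$. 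For $u_f$, the change of variables $\xi_1 + i\xi_2 = (x_1+ix_2)^{1/q}$ (used in the proof of Theorem \ref{poisson_thm}) converts $u_f$ into a bounded single-valued function $u_0(\xi, y)$ on $B^2_1(0) \times \mathbb{R}^{n-2}$ that vanishes on $\partial B^2_1(0) \times \mathbb{R}^{n-2}$ and satisfies the singular elliptic equation $\Delta_\xi u_0 + q^2 |\xi|^{2(q-1)} \Delta_y u_0 = 0$; a parallel Fourier-in-$y$ argument, using that $-\Delta_\xi + q^2|\xi|^{2(q-1)}|\eta|^2$ has strictly positive Dirichlet spectrum on $B^2_1(0)$ for every $\eta \in \mathbb{R}^{n-2}$, yields $u_0 \equiv 0$, hence $u_f \equiv 0$.

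The main obstacle is the Liouville step for $u_f$ in the multivalued (singular) setting: executing the Fourier-in-$y$ argument rigorously requires interpreting the singular equation distributionally in $y$ with $\xi$ as a parameter, and verifying that $-\Delta_\xi + q^2|\xi|^{2(q-1)}|\eta|^2$ remains uniformly invertible on $L^\infty(B^2_1(0))$ functions vanishing on $\partial B^2_1(0)$. An alternative that sidesteps this analysis is to construct an explicit single-valued supersolution on $\mathcal{C}$ of the form $C(\sup_{\partial\mathcal{C}}|u| + [f]_\mu + \sup|g|)(1-|x|^2)$ and apply the strong maximum principle Lemma \ref{maxp_strong} separately to $u_l$ and $-u_l$. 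In either approach, all constants depend only on $n, q, \mu$ and are independent of any periodicity parameters $\rho_i$, since both the Schauder estimate and the Liouville argument are intrinsic to the geometry of $\mathcal{C}$.
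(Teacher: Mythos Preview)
Your contradiction--compactness skeleton matches the paper's proof. There is, however, a gap in the boundary step: you invoke boundary Schauder estimates to obtain uniform $C^{1,\mu;q}(\overline{B^2_1(0)} \times B^{n-2}_R(0))$ bounds, but boundary Schauder requires the trace $u^{(k)}|_{\partial\mathcal{C}}$ to be controlled in $C^{1,\mu}$, whereas the hypothesis only gives $\sup_{\partial\mathcal{C}}|u^{(k)}| \to 0$. The paper uses instead the boundary H\"older continuity estimate \cite[Theorem~8.27]{GT}, which needs only sup control of the boundary data, to get a uniform modulus of continuity for $u^{(k)}$ near $\partial\mathcal{C}$; this is enough to conclude that the limit extends continuously to $\overline{\mathcal{C}}$ with $u^*=0$ on $\partial\mathcal{C}$, and hence that the near-maximum point $(x_*,0)$ is genuinely interior.

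Your Liouville step is plausible in outline but far more elaborate than needed, and you correctly flag the average-free part as the ``main obstacle.'' The paper bypasses all of this by applying the strong maximum principle Lemma~\ref{maxp_strong} directly to the limit: $\hat u$ is in $C^{0;q}(\overline{\mathcal{C}}) \cap C^{1;q}(\mathcal{C}) \cap C^{2;q}(\mathcal{C}\setminus\{0\}\times\mathbb{R}^{n-2})$ by elliptic regularity, satisfies $\Delta \hat u_l = 0$, vanishes on $\partial\mathcal{C}$, and attains its supremum (or infimum) at an interior point; Lemma~\ref{maxp_strong} then forces all $\hat u_l$ to equal the same constant, which the boundary condition forces to be zero---contradiction. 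No Fourier-in-$y$ decomposition, no singular change of variables, and no barrier construction is required. Your supersolution alternative is closer in spirit, but even that is more than the paper needs: Lemma~\ref{maxp_strong} already handles the axis and gives the contradiction in one stroke.
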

\begin{proof} 
Suppose instead for every integer $k \geq 1$ there are $u_k = (u_{k,1},u_{k,2},\ldots,u_{k,q}) \in C^{1,\mu;q}(\overline{\mathcal{C}})$, $f_k^i = (f_{k,1}^i,f_{k,2}^i,\ldots,f_{k,q}^i) \in C^{0,\mu;q}(\overline{\mathcal{C}})$, and $g_k = (g_{k,1},g_{k,2},\ldots,g_{k,q}) \in C^{0;q}(\overline{\mathcal{C}})$ such that (\ref{inhomog_poisson_eqn}) holds with $u_{k,l}$, $f_{k,l}^i$, and $g_{k,l}$ in place of $u_l$, $f_l^i$, and $g_l$ but 
\begin{equation} \label{inhomog_poisson_eqn1}
	\sup_{\mathcal{C}} |u_k| > k \left( \sup_{\partial \mathcal{C}} |u_k| + [f_k]_{\mu;q,\overline{\mathcal{C}}} + \sup_{\mathcal{C}} |g_k| \right). 
\end{equation}
Assume $\sup_{\mathcal{C}} |u_k| = 1$, $|u_{k,1}(\xi_k,0)| = 1$ for some $\xi_k \in B^2_1(0) \setminus [0,1) \times \{0\}$, and $f_{k,l}(0,0) = 0$ for all $k = 1,2,3,\ldots$ and $l = 1,2,\ldots,q$.  Now (\ref{inhomog_poisson_eqn1}) becomes 
\begin{equation} \label{inhomog_poisson_eqn2}
	\sup_{\partial \mathcal{C}} |u_k| + [f_k]_{\mu;q,\overline{\mathcal{C}}} + \sup_{\mathcal{C}} |g_k| < 1/k. 
\end{equation}
After passing to a subsequence $\{\xi_k\}$ converges to some $\hat \xi \in B^2_1(0)$.  By (\ref{inhomog_poisson_eqn2}), $f_{k,l} \rightarrow 0$ uniformly on $K \setminus [0,\infty) \times \{0\} \times \mathbb{R}^{n-2}$ for every compact set $K \subset \overline{\mathcal{C}}$ and $g_{k,l} \rightarrow 0$ uniformly on $\overline{\mathcal{C}} \setminus [0,1] \times \{0\} \times \mathbb{R}^{n-2}$.  By the interior Schauder estimates Lemma \ref{schauder_div} and (\ref{inhomog_poisson_eqn2}), after passing to a subsequence $\{u_k\}$ converges in $C^{1;q}$ on compact subsets of $\mathcal{C}$ to some $\hat u = (\hat u_1,\hat u_2,\ldots,\hat u_q)$.  Given $(x_0,y_0) \in \partial \mathcal{C}$, $u$ decomposes into $q$ single-valued solutions to Poisson equations on $\mathcal{C} \cap B_{1/2}(x_0,y_0)$, so by local estimates~\cite[Theorem 8.27]{GT} and (\ref{inhomog_poisson_eqn2}) $\hat u$ extends to a continuous function on $\overline{\mathcal{C}}$ with $\hat u_l = 0$ on $\partial \mathcal{C}$ for $l = 1,2,\ldots,q$.  Now $\hat u$ satisfies 
\begin{align*}
	\Delta \hat u_l &= 0 \text{ in } \mathcal{C} \setminus [0,1) \times \{0\} \times \mathbb{R}^{n-2} \text{ for } l = 1,2,\ldots,q, \\
	\hat u_l &= 0 \text{ on } \partial \mathcal{C} \text{ for } l = 1,2,\ldots,q. 
\end{align*}
Note that $\hat u \in C^{\infty;q}(\mathcal{C} \setminus \{0\} \times \mathbb{R}^{n-2})$ by elliptic regularity.  Since $\sup_{\mathcal{C}} |\hat u| \leq 1$ and $|\hat u(\hat \xi,0)| = 1$, $|\hat u|$ has attains its maximum value of $1$ at $(\hat \xi,0)$.  However, $\hat u = 0$ on $\partial \mathcal{C}$, so $\hat u$ in fact attains an interior maximum at $(\hat \xi,0)$, contradicting strong maximum principle Lemma \ref{maxp_strong}. 
\end{proof}

Combining the global supremum estimates similar to~\cite[Theorem 8.16]{GT}, Lemma \ref{inhomog_poisson}, Lemma \ref{schauder_div}, and  and the local boundary Schauder estimates for single-valued solutions~\cite[Section 8.11]{GT}, we obtain global Schauder estimates:

\begin{lemma} \label{schauder_wg}
Let $\mu \in (0,1/q)$ and $B_R(X_0) \subseteq \mathbb{R}^n$.  Suppose $u, \varphi \in C^{1,\mu;q}(\overline{\mathcal{C}})$, $a^{ij}, b^i, f^i \in C^{0,\mu;q}(\overline{\mathcal{C}})$, and $c, d, g \in C^{0;q}(\overline{\mathcal{C}})$ satisfy 
\begin{equation*}
	\int_{\mathcal{C}} \sum_{l=1}^q \left( \left( a_l^{ij} D_j u_l + b^i u_l \right) D_i \zeta_l - \left( c_l^j D_j u + d_l u_l \right) \zeta_l \right) 
	= \int_{\mathcal{C}} \sum_{l=1}^q \left( f_l^i D_i \zeta_l - g_l \zeta_l \right)
\end{equation*}
for all $\zeta \in C_c^{1;q}(\mathcal{C} \setminus \{0\} \times \mathbb{R}^{n-2})$ and $u_l = \varphi_l$ on $\partial \mathcal{C}$ for $l = 1,2,\ldots,q$.  Suppose (\ref{ellipticity}) holds for some constant $\lambda > 0$, 
\begin{equation*}
	\|a^{ij}\|_{C^{0,\mu;q}(\mathcal{C})} \leq \Lambda, 
	\quad \|b^i\|_{C^{0,\mu;q}(\mathcal{C})} + \sup_{\mathcal{C}} |c^j| + \sup_{\mathcal{C}} |d| \leq \nu,
\end{equation*} 
for some constants $\Lambda, \nu > 0$, and 
\begin{equation*}
	\int_{\mathcal{C}} \sum_{l=1}^q \left( -b_l^j D_j \zeta_l + d_l \zeta_l \right) \leq 0 
\end{equation*}
for all $\zeta \in C_c^{1;q}(\mathcal{C} \setminus \{0\} \times \mathbb{R}^{n-2})$ such that $\zeta_l \geq 0$ for $l = 1,2,\ldots,q$.  Then 
\begin{equation*}
	\|u\|_{C^{1,\mu;q}(\overline{\mathcal{C}})} \leq C \left( [f]_{\mu;q,\mathcal{C}} + \sup_{\mathcal{C}} |g| 
		+ \|\varphi\|_{C^{1,\mu;q}(\overline{\mathcal{C}})} \right) .
\end{equation*}
for some constant $C = C(n,q,\mu,\lambda,\Lambda,\nu,\rho_1,\ldots,\rho_{n-2} \in (0,\infty)$.  Moreover, in the special case that (\ref{inhomog_poisson_eqn}) holds, the constant $C$ is independent of $\rho_1,\ldots,\rho_{n-2}$. 
\end{lemma}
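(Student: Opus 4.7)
The plan is to combine three ingredients: a global $L^{\infty;q}$ bound on $u$, the interior Schauder estimate Lemma \ref{schauder_div}, and standard boundary Schauder estimates for single-valued functions applied near $\partial \mathcal{C}$, where the branch set is well separated from the boundary.

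First I would derive the global supremum bound
\[
\sup_{\mathcal{C}} |u| \leq C \bigl( [f]_{\mu;q,\mathcal{C}} + \sup_{\mathcal{C}} |g| + \|\varphi\|_{C^{1,\mu;q}(\overline{\mathcal{C}})} \bigr).
\]
Replacing $u$ by $w = u - \varphi$ reduces to the case of zero boundary data, with the right-hand side absorbed into the $f,g$ data. The coercivity hypothesis $\int_{\mathcal{C}} \sum_l (-b_l^j D_j \zeta_l + d_l \zeta_l) \leq 0$ for nonnegative $\zeta$ is precisely what is needed to run the Stampacchia/Moser iteration of \cite[Theorem 8.16]{GT} using the test function $\zeta_l = (w_l - k)_+$ truncated by the cutoff $\chi_\delta$ from the paragraph following (\ref{divform}). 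The $q$-valued Sobolev inequality (Lemma \ref{sololevlemma}) supplies the iteration's key embedding, and letting $\delta \downarrow 0$ legitimizes the use of arbitrary non-negative $\zeta \in C_c^{1;q}(\mathcal{C})$. In the Poisson case $a^{ij}_l = \delta^{ij}$, $b=c=d=0$, I would instead invoke Lemma \ref{inhomog_poisson}, which delivers the bound with constant independent of $\rho_1,\ldots,\rho_{n-2}$.

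Next I would cover $\overline{\mathcal{C}}$ by interior and boundary neighborhoods. On any ball $B_r(X) \subset \mathcal{C}$ with $r$ a fixed fraction of $\mathrm{dist}(X,\partial\mathcal{C})$, the interior estimate Lemma \ref{schauder_div} yields
\[
\|u\|'_{C^{1,\mu;q}(B_{r/2}(X))} \leq C \bigl( \sup_{B_r(X)}|u| + r^{1+\mu}[f]_{\mu;q,B_r(X)} + r^2 \sup_{B_r(X)} |g| \bigr),
\]
which together with the $L^{\infty;q}$ bound controls $u$ on the deep interior. Near a boundary point $X_0 \in \partial \mathcal{C} = \partial B^2_1(0) \times \mathbb{R}^{n-2}$, the key observation is that $\mathrm{dist}(X_0, \{0\}\times\mathbb{R}^{n-2}) = 1$, so on a half-ball $B_{r_0}(X_0)\cap\overline{\mathcal{C}}$ with $r_0 < 1/2$ the branch locus is absent and $u = (u_1,\ldots,u_q)$ decomposes into $q$ \emph{single-valued} solutions $u_l \in C^{1,\mu}$ of the divergence-form equation with boundary values $u_l = \varphi_l$ on $\partial \mathcal{C} \cap B_{r_0}(X_0)$. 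Since the cylinder wall is a smooth real-analytic hypersurface, the standard local boundary Schauder estimate \cite[Section 8.11]{GT} applies to each $u_l$ separately and gives the desired $C^{1,\mu}$ bound there.

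Patching these estimates over a covering of $\overline{\mathcal{C}}$ yields the conclusion. Because the equation and $u$ are periodic with periods $\rho_i$, only finitely many inequivalent balls per period cell need be treated, and the overall constant depends on $n,q,\mu,\lambda,\Lambda,\nu$ and the $\rho_i$ through the $L^\infty$ step. In the Poisson case the $\rho_i$-dependence is eliminated because each of Lemma \ref{inhomog_poisson}, Lemma \ref{schauder_div}, and the one-sided boundary estimates has constants depending only on local data. The main obstacle I anticipate is the first step: carefully justifying that the standard single-valued Moser iteration transfers to $q$-valued divergence-form equations under the coercivity and periodicity hypotheses, in particular verifying that the cutoff argument near $\{0\}\times\mathbb{R}^{n-2}$ leaves the resulting energy inequalities unaffected in the limit.
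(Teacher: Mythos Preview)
Your proposal is correct and follows essentially the same approach as the paper, which states the lemma as a direct combination of the global supremum estimate analogous to \cite[Theorem 8.16]{GT} (with Lemma \ref{inhomog_poisson} in the Poisson case), the interior Schauder estimate Lemma \ref{schauder_div}, and the local boundary Schauder estimates for single-valued solutions \cite[Section 8.11]{GT}. Your observation that near $\partial\mathcal{C}$ the branch set is at distance one, so $u$ decomposes into $q$ single-valued $C^{1,\mu}$ solutions, is exactly the mechanism the paper relies on (compare the analogous step in the proof of Lemma \ref{inhomog_poisson}).
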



\section{Existence of solutions to a Poisson equation} \label{sec:poisson}

We now want to prove Theorem \ref{poisson_thm}, which recall involves finding a solution $u = (u_1,u_2,\ldots,u_q) \in C^{0;q}(\overline{\mathcal{C}}) \cap C^{1,\mu;q}(\mathcal{C})$ to (\ref{poisson_thm_eqn}) given $f^i = (f^i_1,f^i_2,\ldots,f^i_q) \in C^{0,\mu;q}(\overline{\mathcal{C}})$ and $g = (g_1,g_2,\ldots,g_q), \varphi = (\varphi_1,\varphi_2,\ldots,\varphi_q) \in C^{0;q}(\overline{\mathcal{C}})$ as in the statement of Theorem \ref{poisson_thm}.  Note that by the weak maximum principle analogous to~\cite[Theorem 8.1]{GT}, in the case that $u$, $f^i$, $g$, and $\varphi$ are periodic with respect to $y_i$ for $i = 1,2,\ldots,n-2$ there is at most one solution $u$ to (\ref{poisson_thm_eqn}).  To solve (\ref{poisson_thm_eqn}) we will first assume that $f^j$, $g$, and $\varphi$ are periodic with respect to $y_i$ with period $\rho_i$ for $i = 1,2,\ldots,n-2$.  Let $u_a$, $f_a$, $g_a$, and $\varphi_a$ denote the average parts of $u$, $f$, $g$, and $\varphi$ respectively and $u_f$, $f_f$, $g_f$, and $\varphi_f$ denote the average-free parts of $u$, $f$, $g$, and $\varphi$ respectively as defined by (\ref{averageandfree}).  By linearity, it suffices to use the existence theory for single-valued functions~\cite[Theorem 8.34]{GT} to solve for $u_a$ such that $\Delta u_a = D_j f_a^j + g_a$ weakly in $\mathcal{C}$ and $u_a = \varphi_a$ on $\partial \mathcal{C}$ and then solve for $u_f$ such that $\Delta u_f = D_j f_f^j + g_f$ weakly in $\mathcal{C}$ and $u_f = \varphi_f$ on $\partial \mathcal{C}$.  Thus we may suppose $f^j$, $g$, and $\varphi$ are average-free and find an average-free solution $u$ to (\ref{poisson_thm_eqn}).

For simplicity, we will first assume $f_l^j = 0$ in $\overline{\mathcal{C}}$ for $l = 1,2,\ldots,q$.  To solve (\ref{poisson_thm_eqn}), we will use the change of variables $\xi_1 + i\xi_2 = (x_1+ix_2)^{1/q}$.  Under this change of variables, $u$, $g$, and $\varphi$ transform into the continuous single-valued function $u_0$, $g_0$, and $\varphi_0$ on $\overline{\mathcal{C}}$ given by 
\begin{equation*}
	u_0(re^{i\theta},y) = u_l(r^q e^{i q\theta},y), \quad 
	g_0(re^{i\theta},y) = g_l(r^q e^{i q\theta},y), \quad 
	\varphi_0(re^{i\theta},y) = \varphi_l(r^q e^{i q\theta},y), 
\end{equation*}
for $r \in [0,1]$, $2(l-1)\pi/q < \theta < 2l\pi/q$, and $y \in \mathbb{R}^{n-2}$, and $l = 1,2,\ldots,q$ and (\ref{poisson_thm_eqn}) transforms into 
\begin{align*}
	\Delta_{\xi} u_0 + q^2 |\xi|^{2q-2} \Delta_y u_0 &= q^2 |\xi|^{2q-2} g_0 \text{ weakly in } \mathcal{C} \setminus \{0\} \times \mathbb{R}^{n-2}, \nonumber \\
	u_0 &= \varphi_0 \text{ on } \partial \mathcal{C}. 
\end{align*}
We will assume that $g_0$ and $\varphi_0$ are smooth on $\overline{\mathcal{C}}$ and $\varphi_0 = 0$ in $B^2_{1/2}(0) \times \mathbb{R}^{n-2}$.  Since $g$ and $\varphi$ are periodic with respect to $y_j$ with period $\rho_j$ for $j = 1,2,\ldots,n-2$, $g_0$ and $\varphi_0$ are periodic with respect to $y_j$ with period $\rho_j$ for $j = 1,2,\ldots,n-2$.  Thus for each $z = (z_1,z_2,\ldots,z_{n-2}) \in \mathbb{Z}^{n-2}$ we can define Fourier coefficients $g_{0,z}$ and $\varphi_{0,z}$ of $u_0$, $g_0$, and $\varphi_0$ with respect to $y$ by 
\begin{equation*}
	g_{0,z}(\xi) = \int_{\mathbb{R}^{n-2}} e^{-i 2\pi \sum_{j=1}^{n-2} z_j y_j/\rho_j} g_0(\xi,y) dy, \quad 
	\varphi_{0,z}(\xi) = \int_{\mathbb{R}^{n-2}} e^{-i 2\pi \sum_{j=1}^{n-2} z_j y_j/\rho_j} \varphi_0(\xi,y) dy 
\end{equation*}
for $\xi \in B^2_1(0)$ and solve for the Fourier coefficient $u_{0,z}$ of $u$ satisfying 
\begin{align} \label{poisson_eqn3}
	\Delta_{\xi} u_{0,z} - q^2 (z_1^2/\rho_1^2 + z_2^2/\rho_2^2 + \cdots + z_{n-2}^2/\rho_{n-2}^2) |\xi|^{2q-2} u_{0,z} 
		&= q^2 |\xi|^{2q-2} g_{0,z} \text{ in } B^2_1(0), \nonumber \\
	u_0 &= \varphi_0 \text{ on } \partial \mathcal{C}. 
\end{align}
for each $z \in \mathbb{Z}^{n-2}$.  By standard elliptic theory~\cite[Theorems 8.14]{GT} there exists a unique solution $u_{0,z} \in C^{\infty}(\overline{B^2_1(0)})$ to (\ref{poisson_eqn3}) for every $z \in \mathbb{Z}^{n-2}$.  

Fix $z \in \mathbb{Z}^{n-2}$ and define $u_z = (u_{1,z},u_{2,z},\ldots,u_{q,z}) : \overline{B^2_1(0)} \setminus [0,1] \times \{0\} \rightarrow \mathbb{R}^q$ by 
\begin{equation*}
	u_{l,z}(r e^{i\theta}) = u_{0,z}(r^{1/q} e^{i(\theta + 2(l-1)\pi)/q})
\end{equation*}
for $r \in [0,1]$, $\theta \in (0,2\pi)$, and $l = 1,2,\ldots,q$.  We will show $u_z \in C^{1,\mu;q}(\overline{B_1(0)})$ using the average-free and $k$-fold symmetry assumptions.  Since $g$ and $\varphi$ are average-free, 
\begin{equation} \label{poisson_transaf}
	\sum_{l=0}^{q-1} g_{0,z}(re^{i(\theta + 2\pi l/q)}) = \sum_{l=0}^{q-1} \varphi_{0,z}(re^{i(\theta + 2\pi l/q)},y) = 0
\end{equation}
for all $r \in [0,1]$ and $\theta \in [0,2\pi)$.  Since $g$ and $\varphi$ have $k$-fold symmetry and $k$ and $q$ are relatively prime, 
\begin{equation} \label{poisson_transsym}
	g_{0,z}(re^{i(\theta+2\pi/k)}) = g_{0,z}(re^{i\theta}), \quad \varphi_{0,z}(re^{i(\theta+2\pi/k)}) = \varphi_{0,z}(re^{i\theta})
\end{equation}
for all $r \in [0,1]$ and $\theta \in [0,2\pi)$.  By the maximum principle, (\ref{poisson_transaf}), and (\ref{poisson_transsym}), 
\begin{equation} \label{poisson_usym} 
	\sum_{l=0}^{q-1} u_{0,z}(re^{i(\theta + 2\pi l/q)}) = 0, \quad u_{0,z}(re^{i(\theta+2\pi/k)},y) = u_{0,z}(re^{i\theta},y), 
\end{equation}
for all $r \in [0,1]$ and $\theta \in [0,2\pi)$.  By (\ref{poisson_eqn3}), the degree $2q-1$ Taylor polynomial of $u_{0,z}$ is harmonic and thus takes the form 
\begin{equation*}
	\op{Re} \left( \sum_{j=0}^{2q-1} c_j (\xi_1+i\xi_2)^j \right)
\end{equation*}
for some $c_j \in \mathbb{C}$.  By (\ref{poisson_usym}), $c_j = 0$ for $j = 0,1,2,\ldots,q$.  By the Schauder estimates, 
\begin{equation*}
	\sum_{j=0}^2 |\xi|^j |D^j u_{0,z}(\xi)| 
	\leq \sup_{B_1(0)} |D^{q+1} u_{0,z}| |\xi|^{q+1} 
	\leq C \left( \|g_{0,z}\|_{C^q(B^2_1(0))} + \|\varphi_{0,z}\|_{C^{q+2}(\partial B^2_1(0)} \right) |\xi|^{q+1} 
\end{equation*}
for $\xi \in \overline{B^2_1(0)}$ for some constant $C = C(n,q,\mu,z,\rho_1,\ldots,\rho_{n-2}) \in (0,\infty)$.  By the change of variable $\xi_1+i\xi_2 = (x_1+ix_2)^{1/q}$, 
\begin{equation*}
	\sum_{j=0}^2 |x|^j |D^j u_{l,z}(x)| 
	\leq C \left( \|g_{0,z}\|_{C^q(B^2_1(0))} + \|\varphi_{0,z}\|_{C^{q+2}(\partial B^2_1(0))} \right) |x|^{1+1/q} 
\end{equation*}
for all $x \in B^2_1(0)$ and thus $u_z \in C^{1,1/q;q}(B^2_1(0))$ for all $z \in \mathbb{Z}^{n-2}$. 

For each integer $\nu \geq 1$, we define partial sums of the Fourier series of $u_0$, $g_0$, and $\varphi_0$ by 
\begin{gather*}
	u_0^{(\nu)}(\xi,y) = \sum_{|z| \leq \nu} u_{0,z}(\xi) e^{i 2\pi \sum_{j=1}^{n-2} z_j y_j/\rho_j}, \quad
	g_0^{(\nu)}(\xi,y) = \sum_{|z| \leq \nu} g_{0,z}(\xi) e^{i 2\pi \sum_{j=1}^{n-2} z_j y_j/\rho_j}, \\
	\varphi_0^{(\nu)}(\xi,y) = \sum_{|z| \leq \nu} \varphi_{0,z}(\xi) e^{i 2\pi \sum_{j=1}^{n-2} z_j y_j/\rho_j}, 
\end{gather*}
and define $u^{(\nu)} = (u^{(\nu)}_1,u^{(\nu)}_2,\ldots,u^{(\nu)}_q)$, $g^{(\nu)} = (g^{(\nu)}_1,g^{(\nu)}_2,\ldots,g^{(\nu)}_q)$, and $\varphi^{(\nu)} = (\varphi^{(\nu)}_1,\varphi^{(\nu)}_2,\ldots,\varphi^{(\nu)}_q)$ in $C^{0;q}(\overline{\mathcal{C}})$ by 
\begin{gather*}
	u^{(\nu)}_l(r e^{i\theta},y) = u^{(\nu)}_0(r^{1/q} e^{i(\theta + 2(l-1)\pi)/q},y), \quad
	g^{(\nu)}_l(r e^{i\theta},y) = g^{(\nu)}_0(r^{1/q} e^{i(\theta + 2(l-1)\pi)/q},y), \\
	\varphi^{(\nu)}_l(r e^{i\theta},y) = \varphi^{(\nu)}_0(r^{1/q} e^{i(\theta + 2(l-1)\pi)/q},y), 
\end{gather*}
for $r \in [0,1]$, $\theta \in (0,2\pi)$, $y \in \mathbb{R}^{n-2}$, and $l = 1,2,\ldots,q$.  Since 
\begin{equation*}
	u^{(\nu)}_l(x,y) = \sum_{z \in \mathbb{Z}^{n-2}} u_{l,z}(x) e^{i 2\pi \sum_{j=1}^{n-2} z_j y_j/\rho_j}, 
\end{equation*}
for all $x \in \overline{B^2_1(0)} \setminus [0,1] \times \{0\}$ and $y \in \mathbb{R}^{n-2}$ and $u_z \in C^{1,1/q;q}(B^2_1(0))$ for all $z \in \mathbb{Z}^{n-2}$, $u^{(\nu)} \in C^{1,1/q;q}(\overline{\mathcal{C}})$.  Since $g_0$ is smooth, 
\begin{equation*}
	\sup_{\xi \in B^2_1(0)} |g_{0,z}(\xi)| \leq C(1+|z_1|)^{-2} (1+|z_2|)^{-2} \cdots (1+|z_{n-2}|)^{-2} \|g_0\|_{C^2(\overline{\mathcal{C}})}. 
\end{equation*} 
Thus $g_0^{(\nu)}$ converges uniformly to $g_0$ on $\overline{\mathcal{C}}$ as $\nu \rightarrow \infty$.  Hence $g^{(\nu)}$ converges to $g$ in $C^{0;q}(\overline{\mathcal{C}})$ as $\nu \rightarrow \infty$.  Similarly $\varphi_0^{(\nu)}$ converges to $\varphi_0$ in $C^2(\overline{\mathcal{C}})$ and thus, since $\varphi_0 = 0$ in $B^2_{1/2}(0) \times \mathbb{R}^{n-2}$, $\varphi^{(\nu)}$ converges to $\varphi$ in $C^{2;q}(\overline{\mathcal{C}})$ as $\nu \rightarrow \infty$.  By the Schauder estimate of Lemma \ref{schauder_wg}, for every $\mu \in (0,1/q)$, 
\begin{equation*}
	\|u^{(\nu)}\|_{C^{1,\mu;q}(\overline{\mathcal{C}})} 
	\leq C \left( \sup_{\mathcal{C}} |g^{(\nu)}| + \|\varphi^{(\nu)}\|_{C^{1,\mu;q}(\overline{\mathcal{C}})} \right) 
	\leq C \left( \sup_{\mathcal{C}} |g| + \|\varphi\|_{C^{1,\mu;q}(\overline{\mathcal{C}})} + 1 \right) 
\end{equation*}
for some constant $C = C(n,q,\mu) \in (0,\infty)$ independent of $\nu$.  After passing to a subsequence, $\{u^{(\nu)}\}$ converges to some $u$ in $C^{1;q}(\overline{\mathcal{C}})$ such that $u \in C^{1,\mu;q}(\overline{\mathcal{C}})$ for all $\mu \in (0,1/q)$ and $u$ satisfies (\ref{poisson_thm_eqn}). 

Consider the case where $f^j \neq 0$ and $g_0$ and $\varphi_0$ are merely continuous.  We will construct functions $f^{(\nu)} \in C^{\infty;q}(\overline{\mathcal{C}};\mathbb{R}^n)$ and $g^{(\nu)}, \varphi^{(\nu)} \in C^{\infty;q}(\overline{\mathcal{C}})$ approximating $f$, $g$, and $\varphi$ as follows.  Extend $f$ to an element of $C^{0,\mu;q}_c(\mathbb{R}^n;\mathbb{R}^n)$ such that $[f]_{\mu;q,\mathbb{R}^n} \leq C [f]_{\mu;q,\mathcal{C}}$ for some $C = C(n,\mu) \in (0,\infty)$.  For each $\delta > 0$, let $\chi_{\delta} \in C^{\infty}(\mathbb{R}^2)$ be a single-valued function such that $0 \leq \chi_{\delta} \leq 1$, $\chi_{\delta} = 1$ on $\mathbb{R}^2 \setminus B^2_{\delta}(0)$, $\chi_{\delta} = 0$ on $B^2_{\delta/2}(0)$, and $|D\chi_{\delta}| \leq 3/\delta$ and extend $\chi_{\delta}$ to a function $\chi_{\delta}(x,y)$ of $x \in \mathbb{R}^2$ and $y \in \mathbb{R}^{n-2}$ that is independent of $y$.  Since $f \in C^{0,\mu;q}_c(\mathbb{R}^n;\mathbb{R}^n)$ and $f_l = 0$ on $\{0\} \times \mathbb{R}^{n-2}$ for $l = 1,2,\ldots,q$ because $f$ is average free, observe that $\chi_{\delta} f = (\chi_{\delta} f_1, \chi_{\delta} f_2, \ldots, \chi_{\delta} f_q)$ is in $C^{0,\mu;q}_c(\mathbb{R}^n;\mathbb{R}^n)$ with 
\begin{equation*}
	\chi_{\delta} f \rightarrow f \text{ in } C^0(B^2_2(0) \times \mathbb{R}^{n-2}) \text{ as } \delta \downarrow 0, \quad 
	[\chi_{\delta} f]_{\mu;q,\mathbb{R}^n} \leq C [f]_{\mu;q,\mathbb{R}^n} \text{ for } C = C(n,\mu) \in (0,\infty).
\end{equation*} 
Select a smooth spherically symmetric mollifier $\psi \in C_c^{\infty}(B_1(0))$ and let $\psi_{\sigma}(X) = \sigma^{-n} \psi(X/\sigma)$ for $X \in B_{\sigma}(0)$ and $\sigma > 0$.  Let $(x_1,x_2,y) \in \mathcal{C} \setminus B^2_{2^{-\nu-2}}(0) \times \mathbb{R}^{n-2}$.  If $x_1 \leq 0$ then we define $f^{(\nu)}_l(x_1,x_2,y)$ to be the value of the convolution of $f_l$ and $\psi_{2^{-\nu-3}}$ at $(x_1,x_2,y)$.  If $x_1 > 0$ and $x_2 > 0$ then we define $f^{(\nu)}_l(x_1,x_2,y)$ to be the value of the convolution of $\hat f_l$ and $\psi_{2^{-\nu-3}}$ at $(x_1,x_2,y)$, where $\hat f_l = f_l$ for $l = 1,2,\ldots,q$ on $\mathcal{C} \cap (0,\infty)^2 \times \mathbb{R}^{n-2}$ and $\hat f_1 = f_q$ and $\hat f_l = f_{l-1}$ for $l = 2,3,\ldots,q$ on $\mathcal{C} \cap (0,\infty) \times (-\infty,0) \times \mathbb{R}^{n-2}$.  If $x_1 > 0$ and $x_2 < 0$ then we define $f^{(\nu)}_l(x_1,x_2,y)$ to be the value of the convolution of $\hat f_l$ and $\psi_{2^{-\nu-3}}$ at $(x_1,x_2,y)$, where now $\hat f_l = f_l$ for $l = 1,2,\ldots,q$ on $\mathcal{C} \cap (0,\infty) \times (-\infty,0) \times \mathbb{R}^{n-2}$ and $\hat f_l = f_{l+1}$ for $l = 1,2,\ldots,q-1$ and $\hat f_q = f_1$ on $\mathcal{C} \cap (0,\infty)^2 \times \mathbb{R}^{n-2}$.  Define $f^{(\nu)}_l = 0$ on $B^2_{2^{-\nu-2}}(0) \times \mathbb{R}^{n-2}$.  Then 
\begin{equation*}
	f^{(\nu)} \rightarrow f \text{ in } C^{0;q}(\overline{\mathcal{C}}), \quad 
	[f^{(\nu)}]_{\mu;q,\mathcal{C}} \leq C [f]_{\mu;q,\mathcal{C}} \text{ for } C = C(n,\mu) \in (0,\infty).
\end{equation*} 
Define $g^{(\nu)}_0$ by convolution of $g_0$ with smooth spherically symmetric mollifiers such that $g^{(\nu)}_0 \rightarrow g_0$ uniformly on $\overline{\mathcal{C}}$ and then define $g^{(\nu)}$ by 
\begin{equation*}
	g^{(\nu)}_0(re^{i\theta},y) = g^{(\nu)}_l(r^q e^{i q\theta},y) \text{ if } 2(l-1)\pi/q < \theta < 2l\pi/q 
\end{equation*}
for $r \in [0,1]$, $y \in \mathbb{R}^{n-2}$, and $l = 1,2,\ldots,q$.  Assume $\varphi = 0$ on $B^2_{1/2}(0) \times \mathbb{R}^{n-2} \setminus [0,1/2) \times \{0\} \times \mathbb{R}^{n-2}$ and define $\varphi^{(\nu)}$ similarly via convolution of $\varphi_0$ with smooth spherically symmetric mollifiers.  Let $u^{(\nu)} = (u^{(\nu)}_1,u^{(\nu)}_2,\ldots,u^{(\nu)}_q) \in C^{1,\mu}(\overline{\mathcal{C}})$ to be the solution to (\ref{poisson_thm_eqn}) with $u^{(\nu)}_l$, $0$, $\op{div} f^{(\nu)}_l + g^{(\nu)}_l$, and $\varphi^{(\nu)}_l$ in place of $u_l$, $f_l$, $g_l$, and $\varphi_l$ respectively.  By global supremum estimates similar to~\cite[Theorem 8.16]{GT}, $\{u^{(\nu)}\}$ is Cauchy in $C^{0;q}(\overline{\mathcal{C}})$ and thus $\{u^{(\nu)}\}$ converges to some $u$ in $C^{0;q}(\overline{\mathcal{C}})$.  By the local Schauder estimates of Lemma \ref{schauder_div} after passing to a subsequence $\{u^{(\nu)}\}$ converges to $u$ in $C^{1;q}(\overline{\Omega})$ for all $\Omega \subset \subset \mathcal{C}$ and $u \in C^{1,\mu;q}(\mathcal{C})$.  Therefore $u$ is a solution to (\ref{poisson_thm_eqn}). 

To solve (\ref{poisson_thm_eqn}) in the case that $f$, $g$, and $\varphi$ are not periodic with respect to each $y_i$, approximate $f$, $g$, and $\varphi$ uniformly on compact subsets of $\overline{\mathcal{C}}$ by $f^{(\nu)} \in C^{0,\mu;q}(\overline{\mathcal{C}})$, $g^{(\nu)}, \varphi^{(\nu)} \in C^{0;q}(\overline{\mathcal{C}})$ such that (\ref{poisson_symmetry}) holds with $f^{(\nu)}$ in place of $f$, $g^{(\nu)}$ and $\varphi^{(\nu)}$ are $k$-fold symmetric, $f^{(\nu)}$, $g^{(\nu)}$, and $\varphi^{(\nu)}$ are periodic with respect to each $y_j$ with period $\rho_{\nu}$ (independent of $j$) such that $\rho_{\nu} \rightarrow \infty$ as $\nu \rightarrow \infty$, and 
\begin{equation} \label{poisson_cor_eqn2}
	[f^{(\nu)}]_{\mu;q,\mathcal{C}} \leq C [f]_{\mu;q,\mathcal{C}}, \quad \sup_{\mathcal{C}} |g^{(\nu)}| \leq \sup_{\mathcal{C}} |g|, \quad 
	\sup_{\partial \mathcal{C}} |\varphi^{(\nu)}| \leq \sup_{\partial \mathcal{C}} |\varphi|, 
\end{equation}
where $C = C(n,\mu) \in (0,\infty)$.  Let $u^{(\nu)} \in C^{0;q}(\overline{\mathcal{C}}) \cap C^{1,\mu;q}(\mathcal{C})$ solve (\ref{poisson_thm_eqn}) with $u^{(\nu)}_l$, $f^{(\nu)}_l$, $g^{(\nu)}_l$, and $\varphi^{(\nu)}_l$ in place of $u_l$, $f_l$, $g_l$, and $\varphi_l$ respectively.  By Lemma \ref{inhomog_poisson}, local Schauder estimates Lemma \ref{schauder_div}, and (\ref{poisson_cor_eqn2}) we have local $C^{1,\mu;q}$ estimates on $u^{(\nu)}$ that are independent of $\nu$ and thus after passing to a subsequence $\{u^{(\nu)}\}$ converges to some $u \in C^{1,\mu;q}(\mathcal{C})$ in $C^{1;q}(\Omega)$ for all $\Omega \subset \subset \mathcal{C}$.  By using~\cite[Theorem 8.27]{GT} and (\ref{poisson_cor_eqn2}), we can establish uniform modulus of continuity estimates on $u^{(\nu)}$ at points on $\partial \mathcal{C}$ that are independent of $\nu$ and thus $u$ extends to an element of $C^{0;q}(\overline{\mathcal{C}})$ such that $u = \varphi$ on $\partial \mathcal{C}$.  Therefore $u$ is a solution to (\ref{poisson_thm_eqn}).


\section{Existence of solutions to nonlinear systems} \label{sec:nonlinearsystems}

In this section we will prove Theorem \ref{theorem1}, which recall involves finding the unique solution $u = (u_1,u_2,\ldots,u_q) \in C^{1,\mu;q}(\overline{\mathcal{C}};\mathbb{R}^m)$ to (\ref{theorem1_dirprob}) given $F^j_{\kappa}$, $G_{\kappa}$, and $\varphi = (\varphi_1,\varphi_2,\ldots,\varphi_q) \in C^{1,\mu;q}(\overline{\mathcal{C}})$ as in the statement of Theorem \ref{theorem1}.  First we consider the case where $u$ and $\varphi$ are periodic with respect to $y_j$ with period $\rho_j > 0$ for $j = 1,2,\ldots,n-2$.  Let $\mathcal{V}$ denote the space of $u \in C^{1,\mu;q}(\overline{\mathcal{C}};\mathbb{R}^m)$ that has $k$-fold symmetry and is periodic with respect to $y_j$ with period $\rho_j$ for $j = 1,2,\ldots,n-2$.  By Theorem \ref{poisson_thm}, we can define $T : \mathcal{V} \rightarrow \mathcal{V}$ by letting $u = Tv$ if $u = (u_1,u_2,\ldots,u_q), v = (v_1,v_2,\ldots,v_q) \in \mathcal{V}$ satisfy 
\begin{align*}
	\int_{\mathcal{C}} \sum_{l=1}^q D_j u_l^{\kappa} D_j \zeta_l^{\kappa} 
		&= \int_{\mathcal{C}} \sum_{l=1}^q (F^j_{\kappa}(Dv_l) D_j \zeta^{\kappa} - G_{\kappa}(v_l,Dv_l) \zeta^{\kappa}) \text{ for all } 
		\zeta \in C_c^{1;q}(\mathcal{C} \setminus \{0\} \times \mathbb{R}^{n-2};\mathbb{R}^m), \nonumber \\
	u_l &= \varphi_l \text{ on } \partial \mathcal{C} \text{ for } l = 1,2,\ldots,q. 
\end{align*}
Let $\varepsilon > 0$ to be determined and choose arbitrary $v = (v_1,v_2,\ldots,v_q), v' = (v'_1,v'_2,\ldots,v'_q) \in \mathcal{V}$ such that $\|v\|_{C^{1,\mu;q}(\overline{\mathcal{C}})} \leq \varepsilon$ and $\|v'\|_{C^{1,\mu;q}(\overline{\mathcal{C}})} \leq \varepsilon$.  Let $u = (u_1,u_2,\ldots,u_q) = Tv$ and $u' = (u'_1,u'_2,\ldots,u'_q) = Tv'$.  By the Schauder estimate Lemma \ref{schauder_wg}, 
\begin{align} \label{theorem1_computation1}
	\|u\|_{C^{1,\mu;q}(\overline{\mathcal{C}})} &\leq C \left( [F(Dv)]_{\mu, \mathcal{C}} + \sup_{\mathcal{C}} |G(v,Dv)| 
		+ \|\varphi\|_{C^{1,\mu;q}(\overline{\mathcal{C}})} \right), \nonumber \\ 
	\|u-u'\|_{C^{1,\mu;q}(\overline{\mathcal{C}})} &\leq C \left( [F(Dv) - F(Dv')]_{\mu;q,\mathcal{C}} + \sup_{\mathcal{C}} |G(v,Dv) - G(v',Dv')| \right) , 
\end{align}
for some constant $C \in (0,\infty)$ depending on $n$, $m$, $q$, and $\mu$ and independent of $\rho_1,\ldots,\rho_{n-2}$, where $F(Dw) = (F(Dw_1),F(Dw_2),\ldots,F(Dw_q))$ and $G(w,Dw) = (G(w_1,Dw_1),G(w_2,Dw_2),\ldots,G(w_q,Dw_q))$ for $w = (w_1,w_2,\ldots,w_q) \in \mathcal{V}$.  Note that $C$ being independent of $\rho_1,\ldots,\rho_{n-2}$ is necessary for later removing the condition that $\varphi$ be periodic with respect to $y_j$.  Since $F \in C^2(\mathbb{R}^{mn})$ with $DF(0) = 0$,
\begin{align} \label{theorem1_computation2}
	[F(Dv)]_{\mu;q,\mathcal{C}} &\leq C \sup_{|P| \leq \varepsilon} |DF(P)| \, [Dv]_{\mu;q,\mathcal{C}} \leq C \varepsilon^2 \nonumber \\
	[F(Dv) - F(Dv')]_{\mu;q,\mathcal{C}}  
	&\leq C \sup_{|P| \leq \varepsilon} |D^2 F(P)| ([Dv]_{\mu;q,\mathcal{C}} + [Dv']_{\mu;q,\mathcal{C}}) \sup_{\mathcal{C}} |Dv - Dv'| \nonumber 
		\\& \hspace{4.4mm} + C \sup_{|P| \leq \varepsilon} |DF(P)| [Dv - Dv']_{\mu;q,\mathcal{C}} \nonumber \\
	&\leq C \varepsilon \|v - v'\|_{C^{1,\mu;q}(\overline{\mathcal{C}})}
\end{align}
for some constants $C \in (0,\infty)$ depending on $n$, $m$, and $\sup_{|Z|+|P| \leq 1} |D^2 F(P)|$.  Since $G \in C^1(\mathbb{R}^m \times \mathbb{R}^{mn})$ with $G(0) = 0$, 
\begin{align} \label{theorem1_computation4}
	\sup_{\mathcal{C}} |G(v,Dv)| &\leq \sup_{|Z|+|P| \leq \varepsilon} |G(P)| \leq C \sup_{|Z|+|P| \leq \varepsilon} |DG(P)| \, \varepsilon, \nonumber \\ 
	\sup_{\mathcal{C}} |G(v,Dv) - G(v',Dv')| &\leq C \sup_{|Z|+|P| \leq \varepsilon} |DG(P)| \, \|v - v'\|_{C^{1;q}(\overline{\mathcal{C}})} 
\end{align}
for some constant $C = C(n,m,q) \in (0,\infty)$.  Combining (\ref{theorem1_computation1}), (\ref{theorem1_computation2}), and (\ref{theorem1_computation4}) and using the fact that $DG(0) = 0$, for some $\varepsilon > 0$ depending on $n$, $m$, $q$, $\mu$, $F$, and $G$, 
\begin{equation*}
	\|u\|_{C^{1,\mu;q}(\overline{\mathcal{C}})} \leq \varepsilon, \quad
	\|u - u'\|_{C^{1,\mu;q}(\overline{\mathcal{C}})} \leq \frac{1}{2} \|v - v'\|_{C^{1,\mu;q}(\overline{\mathcal{C}})}. 
\end{equation*}
Therefore by the contraction mapping principle, there exists a fixed point $u_0 \in \mathcal{V}$ of $T$ with $\|u\|_{C^{1,\mu;q}(\overline{\mathcal{C}})} \leq \varepsilon$.  In other words, $u$ satisfies (\ref{theorem1_dirprob}).

To remove the condition that $\varphi$ is periodic with respect to $y_j$, approximate $\varphi^{(\nu)}$ in $C^{1;q}(\overline{\mathcal{C}};\mathbb{R}^m)$ by $\varphi^{(\nu)} \in C^{1,\mu;q}(\overline{\mathcal{C}};\mathbb{R}^m)$ such that $\varphi^{(\nu)}$ is $k$-fold symmetric and is periodic with respect to each $y_j$ with period $\rho_{\nu}$ where $\rho^{(\nu)} \rightarrow \infty$, $\varphi^{(\nu)} \rightarrow \varphi$ in $C^{1;q}$ on compact subsets of $\overline{\mathcal{C}}$, and $\|\varphi^{(\nu)}\|_{C^{1,\mu;q}(\overline{\mathcal{C}})} \rightarrow \|\varphi\|_{C^{1,\mu;q}(\overline{\mathcal{C}})}$.  Let $u^{(\nu)} \in C^{1,\mu;q}(\overline{\mathcal{C}};\mathbb{R}^m)$ be the unique solution to (\ref{theorem1_dirprob}) with $u^{(\nu)}$ and $\varphi^{(\nu)}$ in place of $u$ and $\varphi$ respectively.  Since $\|u^{(\nu)}\|_{C^{1,\mu;q}(\overline{\mathcal{C}})} \leq \varepsilon$, after passing to a subsequence $\{u^{(\nu)}\}$ converges in $C^{1;q}(B^2_1(0) \times B^{n-2}_{\rho}(0);\mathbb{R}^m)$ for all $\rho \in (0,\infty)$ to $u$ satisfying (\ref{theorem1_dirprob}) with the original $\varphi$.


\section{Existence of solutions to nonlinear equations} \label{sec:nonlineareqns}

In this section we will prove Theorem \ref{theorem2}, which recall involves finding a solution $u = (u_1,u_2,\ldots,u_q) \in C^{1,\tau;q}(\overline{\mathcal{C}};\mathbb{R}^m)$ for every $\tau \in (0,1/q)$ to (\ref{theorem2_dirprob}) given $A^i$, $B$, and $\varphi = (\varphi_1,\varphi_2,\ldots,\varphi_q) \in C^{2;q}(\overline{\mathcal{C}})$ as in the statement of Theorem \ref{theorem2}.  The proof uses the Leray-Schauder theory.  For now assume $\varphi$ is periodic with respect to $y_j$ with period $\rho_j$ for $j = 1,2,\ldots,n-2$.  Rewrite (\ref{theorem2_dirprob}) as 
\begin{align*}
	(D_{P_j} A^i)(Du_l) D_{ij} u_l + B(u_l,Du_l) &= 0 \text{ in } \mathcal{C} \setminus \{0\} \times \mathbb{R}^{n-2} \text{ for } l = 1,2,\ldots,q, \\
	u_l &= \varphi_l \text{ on } \partial \mathcal{C} \text{ for } l = 1,2,\ldots,q. 
\end{align*}
Let $\mathcal{V}$ denote the space of $u \in C^{1,\mu;q}(\overline{\mathcal{C}})$ that are periodic with respect to $y_j$ with period $\rho_j$ for $j = 1,2,\ldots,n-2$ and have $k$-fold symmetry.  Define $T : \mathcal{V} \rightarrow \mathcal{V}$ by $u = Tv$ if $u = (u_1,u_2,\ldots,u_q), v = (v_1,v_2,\ldots,v_q) \in \mathcal{V}$ satisfy 
\begin{align} \label{theorem2_defnT}
	(D_{P_j} A^i)(Dv_l) D_{ij} u_l + B(v_l,Dv_l) &= 0 \text{ in } \mathcal{C} \setminus [0,1) \times \{0\} \times \mathbb{R}^{n-2} 
		\text{ for } l = 1,2,\ldots,q, \nonumber \\
	u_l &= \varphi_l \text{ on } \partial \mathcal{C} \text{ for } l = 1,2,\ldots,q. 
\end{align}
The existence of a unique $u \in \mathcal{V} \cap C^{2;q}(\mathcal{C} \setminus \{0\} \times \mathbb{R}^{n-2})$ satisfying (\ref{theorem2_defnT}) will follow from Lemma \ref{Texistslemma} below.  By Lemma \ref{Texistslemma}, Schauder estimate Lemma \ref{schauder_strong}, and local boundary Schauder estimates for single-valued solutions~\cite[Section 6.2]{GT}, $T$ is in fact a continuous map from $\mathcal{V}$ into $C^{1,\tau;q}(\overline{\mathcal{C}})$ for every $\tau \in (\mu,1/q)$, so by Arzela-Ascoli $T$ is compact.  We will need to show that for some constants $\mu \in (0,1/q)$ and $C \in (0,\infty)$ depending only on $n$, $A^i$, $B$, and $\|\varphi\|_{C^{2;q}(\overline{\mathcal{C}})}$, if $u \in C^{1,\mu;q}(\overline{\mathcal{C}})$ and $\sigma \in [0,1]$ satisfies 
\begin{align} \label{theorem2_evT}
	\int_{\mathcal{C}} \sum_{l=1}^q (A^i(Du_l) D_i \zeta_l - \sigma B(u_l,Du_l) \zeta_l) &= 0
		\text{ for all } \zeta \in C_c^{1;q}(\mathcal{C} \setminus \{0\} \times \mathbb{R}^{n-2}), \nonumber \\
	u_l &= \sigma \varphi_l \text{ on } \partial \mathcal{C} \text{ for } l = 1,2,\ldots,q, 
\end{align}
then $\|u\|_{C^{1,\mu;q}(\overline{\mathcal{C}})} \leq C$.  Then by the Leray-Schauder theory, there exists a fixed point $u \in \mathcal{V}$ of $T$.  In other words, the $u$ solves (\ref{theorem2_dirprob}).  Note that $u = Tu \in C^{1,\tau;q}(\overline{\mathcal{C}})$ for all $\tau \in (0,1/q)$. 

\begin{lemma} \label{Texistslemma}
Let $1 < \mu \leq \tau < 1/q$ and $a^{ij} = (a^{ij}_1,a^{ij}_2,\ldots,a^{ij}_q), g = (g_1,g_2,\ldots,g_q) \in C^{0,\mu;q}(\overline{\mathcal{C}})$ and $\varphi = (\varphi_1,\varphi_2,\ldots,\varphi_q) \in C^{1,\tau;q}(\overline{\mathcal{C}})$.  Suppose $a^{ij}$, $g$, and $\varphi$ are periodic with respect to $y_l$ with period $\rho_l$ for $l = 1,2,\ldots,n-2$, $a^{ij}(\mathbf{R} X) = a^{i' j'}(X) R_{i'}^i R_{j'}^j$ for all $X \in \overline{\mathcal{C}}$, and 
\begin{equation*}
	a^{ij}(X) \xi_i \xi_j \geq \lambda |\xi|^2 \text{ for } X \in \overline{\mathcal{C}}, \, \xi \in \mathbb{R}^n, 
	\quad \|a^{ij}\|_{C^{0,\mu;q}(\overline{\mathcal{C}})} \leq \Lambda, 
\end{equation*} 
for some constants $\lambda, \Lambda > 0$.  Then there exists a unique $u = (u_1,u_2,\ldots,u_q) \in C^{0;q}(\overline{\mathcal{C}}) \cap C^{1,\tau;q}(\mathcal{C}) \cap C^{2;q}(\mathcal{C} \setminus \{0\} \times \mathbb{R}^{n-2})$ such that 
\begin{align} \label{Texists_dirprob1}
	a_l^{ij} D_{ij} u_l &= g_l \text{ in } \mathcal{C} \setminus [0,1) \times \{0\} \times \mathbb{R}^{n-2} \text{ for } l = 1,2,\ldots,q, \nonumber \\
	u_l &= \varphi_l \text{ on } \partial \mathcal{C} \text{ for } l = 1,2,\ldots,q. 
\end{align}
Moreover, $u$ is $k$-fold symmetric and periodic with respect to $y_l$ with period $\rho_l$ for $l = 1,2,\ldots,n-2$. 
\end{lemma}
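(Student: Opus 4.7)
The plan is to prove uniqueness via the strong maximum principle and to establish existence by the method of continuity, interpolating between the Laplacian (handled by Theorem \ref{poisson_thm}) and the given operator, with uniform a priori estimates built from Lemma \ref{schauder_strong} and standard single-valued boundary theory.

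\emph{Uniqueness.} If $u,u'$ are two solutions, the difference $w=u-u'$ satisfies $a^{ij}_l D_{ij}w_l=0$ in $\mathcal{C}\setminus[0,1)\times\{0\}\times\mathbb{R}^{n-2}$ with $w=0$ on $\partial\mathcal{C}$. Periodicity and continuity on $\overline{\mathcal{C}}$ ensure $\sup_{\mathcal{C}}|w|$ is attained in $\overline{\mathcal{C}}$, so applying Lemma \ref{maxp_strong} to $\pm w$ (with $b^i\equiv 0$, $c\equiv 0$) forces $w\equiv 0$.

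\emph{Existence.} Interpolate the coefficients as $a^{ij,\sigma}_l=(1-\sigma)\delta^{ij}+\sigma a^{ij}_l$ for $\sigma\in[0,1]$; each $a^{ij,\sigma}$ retains ellipticity with constant $\min(1,\lambda)$, the $C^{0,\mu;q}$ bound, the periodicity, and the symmetry $a^{ij,\sigma}(\mathbf{R}X)=R^i_{i'}R^j_{j'}a^{i'j',\sigma}(X)$. Let $S\subseteq[0,1]$ be the set of $\sigma$ for which the Dirichlet problem at parameter $\sigma$ admits a solution in the required $k$-fold symmetric, $y_l$-periodic class. Then $0\in S$ by Theorem \ref{poisson_thm} applied in divergence form with $f^j=0$, and I will show $S$ is both open and closed in $[0,1]$. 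Closedness follows from a uniform a priori estimate
\[
\|u^\sigma\|'_{C^{1,\tau;q}(B^2_1(0)\times B^{n-2}_\rho(0))}\le C_\rho\bigl(\sup_{\partial\mathcal{C}}|\varphi|+\|g\|_{C^{0,\mu;q}(\overline{\mathcal{C}})}+\|\varphi\|_{C^{1,\tau;q}(\overline{\mathcal{C}})}\bigr),
\]
with $C_\rho$ independent of $\sigma\in[0,1]$, obtained by combining the maximum principle for non-divergence equations (global $L^\infty$), Lemma \ref{schauder_strong} applied on interior balls (interior $C^{1,\tau;q}$), and single-valued boundary Schauder theory on $\partial\mathcal{C}=\partial B^2_1(0)\times\mathbb{R}^{n-2}$, which is disjoint from the branch axis $\{0\}\times\mathbb{R}^{n-2}$; Arzela-Ascoli then yields a subsequential limit at any accumulation parameter.

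\emph{Openness and symmetries.} The main obstacle is openness. For $\sigma_0\in S$ with solution $u^{\sigma_0}$, I seek a solution for nearby $\sigma$ of the form $u^{\sigma_0}+w$ by iterating the inverse of $L_{\sigma_0}=a^{ij,\sigma_0}_l D_{ij}$ against the perturbation $(\sigma_0-\sigma)(a^{ij}_l-\delta^{ij})D_{ij}$; since this iteration involves $D^2u$ while the solution is only $C^{1,\tau;q}$ near the branch axis, I would carry out the contraction on approximate problems in $C^{2,\mu;q}(\mathcal{C}\setminus\{0\}\times\mathbb{R}^{n-2})$ (where standard single-valued $C^{2,\mu}$ Schauder theory applies on balls off the axis) while using the uniform interior $C^{1,\tau;q}$ bound from Lemma \ref{schauder_strong} to control behavior near the axis independently of the iteration, and then pass to the limit. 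Finally, the $k$-fold symmetry and $y_l$-periodicity of $u$ follow from uniqueness: each appropriately rotated or translated copy of $u$ also solves the Dirichlet problem and hence must equal $u$.
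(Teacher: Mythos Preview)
Your overall architecture---uniqueness via Lemma \ref{maxp_strong}, existence via continuity from $\sigma=0$ (Theorem \ref{poisson_thm}), closedness via the a priori package from Lemma \ref{schauder_strong} plus single-valued boundary theory---matches the paper. The gap is exactly where you flag it: openness. Your proposed workaround (iterate in $C^{2,\mu;q}$ off the axis while relying on a $C^{1,\tau;q}$ bound near the axis) does not give a Banach space on which the perturbation map is a contraction. The $C^{2,\mu}$ seminorms blow up as you approach $\{0\}\times\mathbb{R}^{n-2}$, so there is no norm in which both (i) the right-hand side $(\sigma_0-\sigma)(a^{ij}_l-\delta^{ij})D_{ij}w_l$ is controlled by the norm of $w$ and (ii) the solution operator for $L_{\sigma_0}$ is bounded with a constant independent of the distance to the axis. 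Without that, ``passing to the limit'' after iterating on approximate problems does not produce a fixed point.

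The paper resolves this not by working harder in non-divergence form but by changing the functional setting. First approximate $a^{ij}$ (and $\varphi$) by elements of $C^{\infty;q}(\overline{\mathcal{C}})$; for smooth coefficients one can rewrite the equation in divergence form,
\[
a^{ij}_l D_{ij}u_l \;=\; D_i\bigl(a^{ij}_l D_j u_l\bigr) - (D_i a^{ij}_l)\,D_j u_l,
\]
and run the method of continuity on the family $L_t u_l=(1-t)\Delta u_l + t\bigl(D_i(a^{ij}_l D_j u_l)-(D_i a^{ij}_l)D_j u_l\bigr)$ entirely in the Banach space $\mathcal{W}\subset C^{1,\tau;q}(\overline{\mathcal{C}})$. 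The point is that $(L_s-L_t)v$ has the form $D_i f^i+g$ with $f^i\in C^{0,\tau;q}$ and $g\in C^{0;q}$ whenever $v\in C^{1,\tau;q}$, so the Schauder estimate of Lemma \ref{schauder_wg} gives $\|U(v)-U(v')\|_{C^{1,\tau;q}}\le C|s-t|\,\|v-v'\|_{C^{1,\tau;q}}$ with no second derivatives of $v$ appearing. Openness then follows by contraction in $C^{1,\tau;q}$ for $|s-t|$ small. Only afterwards does one pass from smooth $a^{ij},\varphi$ to the general case, using the interior estimate Lemma \ref{schauder_strong} and boundary barriers for compactness---which is precisely the closedness machinery you already described.
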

\begin{proof}
	For now suppose $a^{ij}, \varphi \in C^{\infty;q}(\overline{\mathcal{C}})$.  By replacing $u$ with $u-\varphi$, we may suppose $\varphi = 0$.  Re-write (\ref{Texists_dirprob1}) as 
\begin{align} \label{Texists_dirprob2}
	\int_{\mathcal{C}} \sum_{l=1}^q (a_l^{ij} D_j u_l D_i \zeta_l + D_i a_l^{ij} D_j u_l \zeta_l) &= -\int_{\mathcal{C}} \sum_{l=1}^q g_l \zeta_l
		\text{ for all } \zeta \in C_c^{1;q}(\mathcal{C} \setminus \{0\} \times \mathbb{R}^{n-2}), \nonumber \\
	u_l &= 0 \text{ on } \partial \mathcal{C} \text{ for } l = 1,2,\ldots,q. 
\end{align}
We will solve (\ref{Texists_dirprob2}) using the method of continuity.  Let $\mathcal{W}$ denote the space of $u \in C^{1,\tau;q}(\overline{\mathcal{C}})$ such that $u$ is periodic with respect to $y_j$ with period $\rho_j$ for $j = 1,2,\ldots,n-2$, $u$ has $k$-fold symmetry, and $u_l = 0$ on $\partial \mathcal{C}$ for $l = 1,2,\ldots,q$.  Define the family $\{L_t\}_{t \in [0,1]}$ of weak linear operators on $\mathcal{W}$ by  
\begin{equation*}
	L_t u_l = (1-t) \Delta u_l + t (D_i (a_l^{ij} D_j u_l) - (D_i a_l^{ij}) D_j u_l)
\end{equation*}
and consider 
\begin{align} \label{methodofcont}
	L_t u_l &= D_j f^j_l + g_l \text{ weakly in } \mathcal{C} \setminus \{0\} \times \mathbb{R}^{n-2}, \nonumber \\
	u_l &= 0 \text{ on } \partial \mathcal{C} 
\end{align}
for $f^j = (f^j_1,f^j_2,\ldots,f^j_q) \in C^{0,\tau;q}(\overline{\mathcal{C}})$ and $g = (g_1,g_2,\ldots,g_q) \in C^{0;q}(\overline{\mathcal{C}})$ such that $f^j$ and $g$ are periodic with respect to $y_l$ with period $\rho_l$ for $l = 1,2,\ldots,n-2$, $f^j$ satisfies (\ref{poisson_symmetry}), and $g$ has $k$-fold symmetry.  By Theorem \ref{poisson_thm}, for $t = 0$ we can find a unique weak solution $u \in \mathcal{W}$ to (\ref{methodofcont}).  Suppose we can find a unique solution $u \in \mathcal{W}$ to (\ref{methodofcont}) for $t = s$ for some $s \in [0,1]$.  Then (\ref{methodofcont}) can be rewritten as 
\begin{align*}
	L_s u_l &= D_j f^j_l + g + (L_s - L_t) u_l \text{ in } \mathcal{C} \setminus \{0\} \times \mathbb{R}^{n-2}, \\
	u_l &= 0 \text{ on } \partial \mathcal{C}. 
\end{align*}
Define a map $U : \mathcal{W} \rightarrow \mathcal{W}$ by $u = U(v)$ for $u = (u_1,u_2,\ldots,u_q), v = (v_1,v_2,\ldots,v_q) \in \mathcal{W}$ if 
\begin{align*}
	L_s u_l &= D_j f^j_l + g + (L_s - L_t) v_l \text{ in } \mathcal{C} \setminus \{0\} \times \mathbb{R}^{n-2}, \\
	u_l &= 0 \text{ on } \partial \mathcal{C}. 
\end{align*}
Choose arbitrary $v = (v_1,v_2,\ldots,v_q), v' = (v'_1,v'_2,\ldots,v'_q) \in \mathcal{W}$ and let $u = (u_1,u_2,\ldots,u_q) = U(v)$ and $u' = (u'_1,u'_2,\ldots,u'_q) = U(v')$.  Since 
\begin{align*}
	L_s(u_l - u'_l) =& (L_s - L_t)(v_l - v'_l) 
	\\ ={}& (s-t) (-\Delta (v_l - v'_l) + D_i (a^{ij}_l D_j (v_l - v'_l)) - (D_i a^{ij}_l) D_j (v_l - v'_l)), 
\end{align*} 
by Schauder estimate Lemma \ref{schauder_wg} 
\begin{align*}
	\|u - u'\|_{C^{1,\tau;q}(\overline{\mathcal{C}})} 
	&\leq C |s-t| \left( [Dv - Dv']_{\tau;q,\mathcal{C}} + [a^{ij} D_j (v - v')]_{\tau;q,\mathcal{C}} 
		+ \sup_{\mathcal{C}} |D_i a^{ij}| \, |Dv_l - Dv'_l| \right) 
	\\&\leq C |s-t| \|v - v'\|_{C^{1,\tau;q}(\overline{\mathcal{C}})} ,
\end{align*}
where $C \in (0,\infty)$ depends only on $n$, $q$, $\tau$, $\lambda$, $\Lambda$, $\|a^{ij}\|_{C^{1;q}(\overline{\mathcal{C}})}$, and $\rho_1,\ldots,\rho_{n-2}$.  So if $|s-t| < 1/2C$, then $U$ is a contraction mapping and we can solve (\ref{methodofcont}) for $t$ with $|s-t| < 1/2C$.  By dividing $[0,1]$ into intervals of length less than $1/2C$ and applying this result we conclude that we can solve (\ref{methodofcont}) for all $t \in [0,1]$, in particular for $t = 1$.  This gives us a $u \in \mathcal{W}$ satisfying (\ref{Texists_dirprob2}).  By elliptic regularity, if $g \in C^{0,\mu;q}(\overline{\mathcal{C}})$ then $u \in C^{2,\mu}(\mathcal{C} \setminus \{0\} \times \mathbb{R}^{n-2})$ and thus $u$ satisfies (\ref{Texists_dirprob1}). 

To solve (\ref{Texists_dirprob1}) for general $a^{ij}$ and $\varphi$, approximate $a^{ij}$ and $\varphi$ by approximating their average parts by convolution with smooth, spherically symmetric mollifiers and approximate their average-free parts using the same construction as in the proof of Theorem \ref{poisson_thm} to approximate $f$ by elements of $C^{\infty;q}(\overline{\mathcal{C}})$ to get $a^{ij}_{\nu} \in C^{\infty;q}(\overline{\mathcal{C}})$ converging to $a^{ij}$ uniformly on $\mathcal{C} \setminus [0,1) \times \{0\} \times \mathbb{R}^{n-2}$ and $\varphi^{(\nu)} \in C^{\infty;q}(\overline{\mathcal{C}})$ converging to $\varphi$ uniformly on $\mathcal{C} \setminus [0,1) \times \{0\} \times \mathbb{R}^{n-2}$.  Let $u^{(\nu)} \in C^{1,\tau;q}(\overline{\mathcal{C}})$ solve (\ref{Texists_dirprob1}) with $u^{(\nu)}$, $a^{ij}_{\nu}$, $g$, and $\varphi^{(\nu)}$ in place of $u$, $a^{ij}$, $g$, and $\varphi$.  By an extension of~\cite[Theorem 3.7]{GT} proven using maximum principle Lemma \ref{maxp_strong}, 
\begin{equation*}
	\sup_{\mathcal{C}} |u^{(\nu)}| \leq \sup_{\partial \mathcal{C}} |\varphi| + \frac{C}{\lambda} \sup_{\mathcal{C}} |g| 
\end{equation*}
for some constant $C = C(n) \in (0,\infty)$.  Thus by the Schauder estimate Lemma \ref{schauder_strong}, after passing to a subsequence $\{u^{(\nu)}\}$ converges in $C^{1;q}$ on compact subsets of $\mathcal{C}$ to $u \in C^{1,\tau;q}(\mathcal{C})$.  By the local Schauder estimates~\cite[Corollary 6.3]{GT}, after passing to a subsequence $\{u^{(\nu)}\}$ converges to some $u$ in $C^{2;q}$ on compact subsets of $\mathcal{C} \setminus \{0\} \times \mathbb{R}^{n-2}$ and thus $a_l^{ij} D_{ij} u_l = g_l$ in $\mathcal{C} \setminus [0,1) \times \{0\} \times \mathbb{R}^{n-2}$ for $l = 1,2,\ldots,q$.   By local barriers~\cite[Section 6.3]{GT}, we can establish uniform modulus of continuity estimates for $u^{(\nu)}$ at points on $\partial \mathcal{C}$ that are independent of $\nu$ and thus $u$ extends to a continuous function on $\overline{\mathcal{C}}$ such $u_l = \varphi_l$ on $\partial \mathcal{C}$. 
\end{proof}

Suppose that $u \in C^{1;q}(\overline{\mathcal{C}})$ satisfies (\ref{theorem2_evT}).  We want to bound $\|u\|_{C^{1,\mu;q}(\overline{\mathcal{C}})}$ for some $\mu \in (0,1/q)$.  By extending~\cite[Theorem 10.3]{GT} using maximum principle Lemma \ref{maxp_strong} and by (\ref{theorem2_structure1}), 
\begin{equation} \label{theorem2_sup}
	\sup_{\overline{\mathcal{C}}} |u| \leq M_0 \quad \text{where} \quad M_0 = \sup_{\partial \mathcal{C}} |\varphi| + C\beta_2
\end{equation}
for some constant $C = C(\beta_1) \in (0,\infty)$, where $\beta_1$ and $\beta_2$ as in (\ref{theorem2_structure1}).  By a standard argument involving local barriers~\cite[Corollary 14.3]{GT} along $\partial \mathcal{C}$ using structure condition (\ref{theorem2_structure2}),
\begin{equation} \label{theorem2_bgradient}
	\sup_{\partial \mathcal{C}} |Du| \leq M_1, 
\end{equation}
for some $M_1 \in (0,\infty)$ depends on $n$, $\|\varphi\|_{C^{2;q}(\overline{\mathcal{C}})}$, $\beta_1$, $\beta_2$, and $\beta_3$, where $\beta_3$ is as in (\ref{theorem2_structure2}). 

We want to show $u \in W^{2,2;q}(\Omega)$ for all $\Omega \subset \subset \mathcal{C}$.  By replacing $\zeta_l$ with $D_p \zeta$ for a single-valued function $\zeta \in C^{1;q}_c(\mathcal{C} \setminus \{0\} \times \mathbb{R}^{n-2})$ and $p \in \{1,2,\ldots,n\}$ in the first equation in (\ref{theorem2_dirprob}) and integrating by parts, 
\begin{equation} \label{theorem2_DNeqn}
	\int_{\mathcal{C}} \sum_{l=1}^q (D_{P_j} A^i(Du_l) D_{jp} u_l D_i \zeta - D_{P_j} B(u_l,Du_l) D_{jp} u_l \zeta - D_Z B(u_l,Du_l) D_p u_l \zeta) = 0 
\end{equation}
for all $\zeta \in C^{1;q}_c(\mathcal{C} \setminus \{0\} \times \mathbb{R}^{n-2})$.  Let $\zeta = D_p u \eta^2 \chi_{\delta}^2$ in (\ref{theorem2_DNeqn}), where $\eta \in C^1_c(\mathcal{C})$ is the single-valued cutoff function such that $0 \leq \eta \leq 1$, $\eta = 1$ on $B_{R/2}(X_0)$, $\eta = 0$ on $\mathbb{R}^n \setminus B_R(X_0)$, and $|D\eta| \leq 3/R$ and $\chi_{\delta}$ is the function such that $0 \leq \chi_{\delta} \leq 1$, $\chi_{\delta}(x,y) = 1$ if $|x| \geq \delta$, $\chi_{\delta}(x,y) = 1$ if $|x| \leq \delta/2$, and $|D\chi_{\delta}| \leq 3/\delta$.  By a standard computation using the fact that $D_{P_j} A^i(P) \xi_i \xi_j \geq \lambda(P) |\xi|^2$ for all $P \in \mathbb{R}^n$ and $\xi \in \mathbb{R}^n$ and using Cauchy's inequality, 
\begin{equation*}
	\int_{B_{R/2}(X_0)} \sum_{l=1}^q |D^2 u_l|^2 \chi_{\delta}^2 \leq C \int_{B_R(X_0)} (\chi_{\delta}^2 + |D\chi_{\delta}|^2) \leq C 
\end{equation*}
for some constants $C \in (0,\infty)$ depending on $n$, $R$, $A^i$, $B$, $M_0$, and $\sup_{\mathcal{C}} |Du|$ and independent of $\delta$.  Letting $\delta \downarrow 0$, $\|D^2 u\|_{L^2(B_{R/2}(X_0))} \leq C$ for all $B_R(X_0) \subset \mathcal{C}$.  

Now let $v(X) = (|Du_1(X)|^2,\ldots, |Du_q(X)|^2) \in C^{1;q}(\mathcal{C} \setminus \{0\} \times \mathbb{R}^{n-2})$.  By replacing $\zeta_l$ with $\sum_{p=1}^n D_p (D_p u \zeta_l)$ in the first equation in (\ref{theorem2_dirprob}) and using integration by parts, we get 
\begin{equation*}
	\int_{\mathcal{C}} \sum_{l=1}^q (D_{P_j} A^i(Du_l) D_j v_l D_i \zeta_l + 2 D_{P_j} A^i(Du_l) D_{pj} u_l D_{ip} u_l \zeta_l 
		- D_{P_j} B(u_l,Du_l) D_j v_l \zeta_l - 2D_Z B(u_l,Du_l) v_l \zeta_l) = 0
\end{equation*}
for all $\zeta \in C_c^{1;q}(\mathcal{C} \setminus \{0\} \times \mathbb{R}^{n-2})$.  Since $D_{P_j} A^i(Du_l) D_{ip} u_l D_{jp} u_l \geq 0$, 
\begin{equation*}
	\int_{\mathcal{C}} \sum_{l=1}^q (D_{P_j} A^i(Du_l) D_j v_l D_i \zeta_l - D_{P_j} B(u_l,Du_l) D_j v_l \zeta_l - 2D_Z B(u_l,Du_l) v_l \zeta_l) \leq 0
\end{equation*}
for all $\zeta \in C_c^{1;q}(\mathcal{C} \setminus \{0\} \times \mathbb{R}^{n-2})$ such that $\zeta_l \geq 0$ in $\mathcal{C} \setminus \{0\} \times \mathbb{R}^{n-2}$ for $l = 1,2,\ldots,q$.  By the weak maximum principle similar to~\cite[Theorem 8.1]{GT}
\begin{equation*}
	\sup_{\mathcal{C}} |Du| \leq \sup_{\partial \mathcal{C}} |Du| \leq M_1, 
\end{equation*} 
where $M_1$ is the constant from (\ref{theorem2_bgradient}).  

By the interior H\"{o}lder continuity estimate Lemma \ref{DgNM_Holder_thm} applied to (\ref{theorem2_DNeqn}) and the boundary H\"{o}lder continuity estimates for single-valued functions~\cite[Section 13.1]{GT} we obtain 
\begin{equation*}
	[u]_{\mu;q,\overline{\mathcal{C}}} \leq C
\end{equation*}
for some constant $C \in (0,\infty)$ and $\mu \in (0,1/q)$ depending on $n$, $q$, $\beta_1$, $\beta_2$, $\beta_3$, $\bar \lambda$, $\sup_{B^n_{M_1}(0)} |D_P A^i|$, $\sup_{(-M_0,M_0) \times B^n_{M_1}(0)} |D_P B|$, $\sup_{(-M_0,M_0) \times B^n_{M_1}(0)} |D_Z B|$, and $\|\varphi\|_{C^{2;q}(\overline{\mathcal{C}})}$.  Therefore we have shown that if $u$ satisfies (\ref{theorem2_evT}), then 
\begin{equation} \label{theorem2_finalest}
	\|u\|_{C^{1,\mu;q}(\overline{\mathcal{C}})} \leq C
\end{equation}
for some constant $C \in (0,\infty)$ and $\mu \in (0,1/q)$ depending on $n$, $q$, $\beta_1$, $\beta_2$, $\beta_3$, $\bar \lambda$, $\sup_{B^n_{M_1}(0)} |D_P A^i|$, $\sup_{(-M_0,M_0) \times B^n_{M_1}(0)} |D_P B|$, $\sup_{(-M_0,M_0) \times B^n_{M_1}(0)} |D_Z B|$, and $\|\varphi_0\|_{C^{2;q}(\overline{\mathcal{C}})}$. 

To solve (\ref{theorem2_dirprob}) in the case that $\varphi \in C^{2;q}(\overline{\mathcal{C}})$ with $\|\varphi\|_{C^{2;q}(\overline{\mathcal{C}})} < \infty$ and $\varphi$ is not periodic, approximate $\varphi$ in $C^{2;q}(\overline{\mathcal{C}})$ by $\varphi^{(\nu)}$ that are $k$-fold symmetric and periodic with respect to each $y_j$ with period $\rho_{\nu}$ such that $\rho_{\nu} \rightarrow \infty$ as $\nu \rightarrow \infty$.  Let $u^{(\nu)} \in C^{1,\mu;q}(\overline{\mathcal{C}})$ solve (\ref{theorem2_dirprob}) with $u^{(\nu)}$ and $\varphi^{(\nu)}$ in place of $u$ and $\varphi$ respectively.  By (\ref{theorem2_finalest}), after passing to a subsequence $\{u^{(\nu)}\}$ converges in $C^{1;q}(\mathcal{C})$ to a solution $u$ to (\ref{theorem2_dirprob}) with the original $\varphi$.  By Schauder estimate Lemma \ref{schauder_strong} and local boundary Schauder estimates for single-valued solutions~\cite[Section 6.2]{GT} for every $\tau \in (0,1/q)$ we have uniformly local $C^{1,\tau;q}$ estimates on $u^{(\nu)}$ that are independent of $\nu$ and thus $u \in C^{1,\tau;q}(\overline{\mathcal{C}})$. 

For Corollary \ref{corollary2}, we need to obtain an interior gradient estimate without using $\varphi \in C^{2;q}(\overline{\mathcal{C}})$.  We will do so by extending an interior gradient estimate due to Simon~\cite[Theorem 1]{Simon_interior} to solutions $u \in C^{1;q}(\mathcal{C})$.  This requires using cutoff function arguments to handle of singularity of $u$ along $\{0\} \times \mathbb{R}^{n-2}$.  For example, the analogue of (2.11) of~\cite{Simon_interior} is 
\begin{align} \label{gradlemma_eqn3}
	&\sum_{l=1}^q \int_{\{v_l \geq \tau\}} \left( (1-\tau/v_l) \mathscr{C}_l^2 + v_l D_{P_j} A^i(Du_l) D_i \omega_l D_j \omega_l \right) v_l \chi(v_l) \zeta^2 
		\nonumber \\
	&\leq 8n (1+c_{\chi})^2 \sum_{l=1}^q \int_{\{v_l \geq \tau\}} \left( \Lambda(v) \beta_1^2 \zeta^2 + 2 \bar \mu_l |D\zeta|^2 \right) v_l \chi(v_l) 
\end{align}
for all $\zeta \in C^{1;q}_c(\mathcal{C} \setminus \{0\} \times \mathbb{R}^{n-2})$, where $v_l = (1+|Du_l|^2)^{1/2}$, $g_l^{ij} = \delta_{ij} - D_i u_l D_j u_l/(1+|Du_l|^2)$, $\mathscr{C}_l^2 = v_l^{-1} D_{P_j} A^i(Du_l) g_l^{kk'} D_{ik} u_l D_{jk'} u_l$, and $\bar \mu_l$ satisfies 
\begin{equation*}
	|v_l D_{P_j} A^i(Du_l) \xi_j \eta_i| \leq \left( \bar \mu_l |\eta|^2 \right)^{1/2} \left( v_l D_{P_j} A^i(Du_l) \xi_i \xi_j \right)^{1/2} 
	\text{ on } \mathcal{C} \setminus [0,1] \times \{0\} \times \mathbb{R}^{n-2} \text{ for } \xi, \eta \in \mathbb{R}^n
\end{equation*}
and $\chi$ and $\Lambda$ are single-valued functions as in~\cite{Simon_interior}.  Since $u \in W^{2,2;q}(\Omega)$ for all $\Omega \subset \subset \mathcal{C}$, we can show (\ref{gradlemma_eqn3}) holds for any $\zeta \in C^{0;q}_c(\mathcal{C}) \cap W^{1,2;q}_0(\mathcal{C})$ by replacing $\zeta$ by $\zeta \psi_{\delta}$ in (\ref{gradlemma_eqn3}) for $\delta > 0$, where $\psi_{\delta} \in C^{\infty}(\mathcal{C})$ is the logarithmic cutoff function defined by $\psi_{\delta}(x,y) = 0$ if $|x| \leq \delta^2$, $\psi_{\delta}(x,y) = -\log(|x|/\delta^2)/\log(\delta)$ if $\delta^2 < |x| < \delta$, and $\psi_{\delta}(x,y) = 1$ if $|x| \geq \delta$, and then letting $\delta \downarrow 0$.  The arguments of~\cite{Simon_interior} only require using key integral inequalities, in particular analogues of (2.1) and (2.11), with test functions in $C^{0;q}_c(\mathcal{C}) \cap W^{1,2;q}_0(\mathcal{C})$ and thus~\cite[Theorem 1]{Simon_interior} follows.  Note that the analogue of (2.1) for test functions $h$ vanishing along $\{0\} \times \mathbb{R}^{n-2}$ follows from the first variation of area formula for the closure of the graph of $u$ as an immerse submanifold in $\mathcal{C} \times \mathbb{R} \setminus \{0\} \times \mathbb{R}^{n-1}$ (see the proof of~\cite[Theorem 18.6]{GMT}) and then the analogue of (2.1) holds for any $h \in C^{0;q}_c(\mathcal{C}) \cap W^{1,2;q}_0(\mathcal{C})$ by the logarithmic cutoff function argument. 

Now we will solve (\ref{theorem2_dirprob}) in the case that $\varphi \in C^{0;q}(\overline{\mathcal{C}})$ has $k$-fold symmetry and $\sup_{\mathcal{C}} |\varphi| < \infty$.  Assume $\varphi = 0$ in $B^2_{1/2}(0) \times \mathbb{R}^{n-2} \setminus [0,1/2) \times \{0\} \times \mathbb{R}^{n-2}$.  Approximate $\varphi$ uniformly on compact subsets of $\overline{\mathcal{C}}$ by $\varphi^{(\nu)}$ that are $k$-fold symmetic and periodic with respect to each $y_j$ with period $\rho_{\nu}$ such $\rho_{\nu} \rightarrow \infty$ as $\nu \rightarrow \infty$.  Let $u^{(\nu)} \in C^{1,1/2q;q}(\overline{\mathcal{C}})$ solve (\ref{theorem2_dirprob}) with $u^{(\nu)}$ and $\varphi^{(\nu)}$ in place of $u$ and $\varphi$.  By (\ref{theorem2_sup}), the interior gradient estimate of~\cite[Theorem 1]{Simon_interior}, and Lemma \ref{DgNM_Holder_thm}, $\sup_{\nu} \|u^{(\nu)}\|_{C^{1,\mu;q}(\Omega)} < \infty$ for all $\Omega \subset \subset \mathcal{C}$ for some $\mu \in (0,1/2q]$ depending on $n$, $A^i$, $B$, and $\sup_{\partial \mathcal{C}} |\varphi|$, so after passing to a subsequence $\{u^{(\nu)}\}$ converges in $C^{1;q}$ on compact subsets of $\mathcal{C}$ to some $u \in C^{1,\mu;q}(\mathcal{C})$.  By Schauder estimate Lemma \ref{schauder_strong} for every $\tau \in (0,1/q)$ we have uniformly local interior $C^{1,\tau;q}$ estimates on $u^{(\nu)}$ that are independent of $\nu$ and thus $u \in C^{1,\tau;q}(\mathcal{C})$.  Using local barriers~\cite[Theorem 14.15]{GT}, we obtain uniform modulus of continuity estimates on $u^{(\nu)}$ at points on $\partial \mathcal{C}$ that are independent of $\nu$ and thereby conclude that $u$ extends to an element of $C^{0;q}(\overline{\mathcal{C}})$ such that $u_l = \varphi_l$ on $\partial \mathcal{C}$ for $l = 1,2,\ldots,q$.  Therefore $u$ solves (\ref{theorem2_dirprob}) with the original $\varphi$.


\section{Regularity of $q$-valued solutions} \label{sec:regularity}

Recall from Section~\ref{sec:preliminaries} that to prove Theorem \ref{theorem3}, it suffices to prove (\ref{theorem3_conclusion}) for $u = (u_1,u_2,\ldots,u_q) \in C^{1;q}(B_1(0))$ such that $\|u\|_{C^{1;q}(B_1(0))} \leq 1/2$ and $u$ satisfies the elliptic equation 
\begin{equation} \label{theorem3_equation}
	D_i (A^i(X,u_l,Du_l)) + B(X,u_l,Du_l) = 0 \text{ in } B_1(0) \setminus [0,\infty) \times \{0\} \times B^{n-2}_1(0) 
\end{equation}
for given locally real analytic single-valued functions $A^i(X,Z,P)$ and $B(X,Z,P)$ on $B_1(0) \times (-1,1) \times B^n_1(0)$.  Using arguments similar to those from Section~\ref{sec:nonlineareqns} we can show that $u \in W^{2,2;q}(\Omega)$ for all $\Omega \subset \subset B_1(0)$ and by Lemma \ref{DgNM_Holder_thm} using the fact that $D_k u$ is a satisfies 
\begin{align*}
	&D_i (D_{P_j} A^i(X,u_l,Du_l) D_j D_k u_l + D_Z A^i(X,u_l,Du_l) D_k u_l + D_{X_k} A^i(X,u_l,Du_l)) 
	\\&+ D_{P_j} B(X,u_l,Du_l) D_j D_k u_l + D_Z B(X,u_l,Du_l) D_k u_l + D_{X_k} B(X,u_l,Du_l) = 0 
\end{align*}
in $B_1(0) \setminus [0,\infty) \times \{0\} \times B^{n-2}_1(0)$, we can conclude that $u \in C^{1,\mu;q}(B_1(0))$ for some $\mu \in (0,1/q)$.  The first step to proving Theorem \ref{theorem3} is to establish that $D_y^{\gamma} u \in C^{1,\mu;q}(B_1(0))$ for all $\gamma$.

\begin{lemma} \label{smoothness_thm}
Let $\mu \in (0,1/q)$.  Suppose $u \in C^{1,\mu;q}(B_1(0))$ such that $\|u\|_{C^{1;q}(B_1(0))} \leq 1/2$ and $u$ satisfies (\ref{theorem3_equation}) for given smooth single-valued functions $A^i, B : B_1(0) \times (-1,1) \times B^n_1(0) \rightarrow \mathbb{R}$ and assume (\ref{theorem3_ellipticity}) holds.  Then $D_y^{\gamma} u \in C^{1,\mu;q}(B_1(0))$ for all $\gamma$.
\end{lemma}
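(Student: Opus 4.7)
The plan is to induct on $p = |\gamma|$, with the base case $p = 0$ given by hypothesis. I will exploit that translation in the $y = (x_3, \ldots, x_n)$ directions preserves the branch set $[0,\infty) \times \{0\} \times \mathbb{R}^{n-2}$, so the difference quotients $\delta_{h,\eta}$ of (\ref{diffquot}) act within the spaces $C^{k,\mu;q}$ and never create new multivaluedness.

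For the transition from $p = 0$ to $p = 1$, fix a unit vector $\eta \in \mathbb{R}^{n-2}$ and set $\eta' = (0, \eta) \in \mathbb{R}^n$. The translate $u_h(X) := u(X + h\eta')$ satisfies (\ref{theorem3_equation}) with $A^i(X + h\eta', \cdot, \cdot)$, $B(X + h\eta', \cdot, \cdot)$ in place of $A^i$, $B$. Subtracting the two equations, dividing by $h$, and splitting each coefficient difference via the fundamental theorem of calculus (one factor for the $X$-slot change, one for the $Z$-slot, one for the $P$-slot), the difference quotient $w_h := \delta_{h,\eta} u$ satisfies a linear divergence-form equation
\begin{equation*}
\int_{B_1(0)} \sum_l \Bigl( (a_h^{ij} D_j w_{h,l} + b_h^i w_{h,l} + c_h^i) D_i \zeta_l - (\tilde{b}_h^j D_j w_{h,l} + \tilde{a}_h w_{h,l} + \tilde{c}_h) \zeta_l \Bigr) = 0
\end{equation*}
for all $\zeta \in C^{1;q}_c(B_1(0) \setminus \{0\} \times \mathbb{R}^{n-2})$, where each coefficient is an integral average of a derivative of $A^i$ or $B$ along the segment joining $(X, u(X), Du(X))$ and $(X + h\eta', u_h(X), Du_h(X))$. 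Smoothness of $A^i, B$ and $u \in C^{1,\mu;q}$ give uniform $C^{0,\mu;q}$ bounds on these coefficients in $h$, and uniform ellipticity follows from (\ref{theorem3_ellipticity}) for small $h$. Lemma \ref{schauder_div} then produces $\|w_h\|'_{C^{1,\mu;q}(B_{R/2}(X_0))} \leq C$ uniformly in $h$ on every $B_R(X_0) \subset \subset B_1(0)$; Arzela-Ascoli and the identification of the limit with $D_\eta u$ yield $D_\eta u \in C^{1,\mu;q}(B_1(0))$.

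For the inductive step from $p$ to $p + 1$, assume $D_y^\beta u \in C^{1,\mu;q}(B_1(0))$ for every $|\beta| \leq p$. Iterated difference quotients in $y$ together with the inductive hypothesis justify formally differentiating (\ref{theorem3_equation}) $p$ times in the $y$ directions: each $v_\beta := D_y^\beta u$ with $|\beta| = p$ satisfies a linear divergence-form equation whose coefficients are polynomials in partial derivatives of $A^i, B$ evaluated at $(X, u, Du)$ and in $\{D_y^{\beta'} u\}_{|\beta'| \leq p}$, and hence lie in $C^{0,\mu;q}$ on compact subdomains. Rerunning the difference-quotient and Schauder argument from the previous paragraph with $v_\beta$ in place of $u$ yields $D_y^{\beta + e_k} u \in C^{1,\mu;q}(B_1(0))$ for each $k \in \{1, \ldots, n-2\}$, closing the induction.

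The main technical obstacle is the bookkeeping at the inductive step: one must verify that every coefficient of the linear equation satisfied by $v_\beta$ really lies in $C^{0,\mu;q}$, that uniform ellipticity is inherited from (\ref{theorem3_ellipticity}), and that the formal differentiation is rigorously justified via the approximating difference quotients. Confining difference quotients to the $y$ directions is essential, since translations along $y$ never cross the branch set and therefore act within the $C^{k,\mu;q}$ spaces; the corresponding regularity in the $x_1, x_2$ directions is not proven here and is in fact false at $\mathcal{B}_{\tilde u}$.
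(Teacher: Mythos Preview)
Your proposal is correct and follows essentially the same approach as the paper's proof: induction on $|\gamma|$, difference quotients $\delta_{h,\eta}$ in the $y$-directions (which preserve the $C^{k,\mu;q}$ structure), uniform $C^{1,\mu;q}$ bounds via the Schauder estimate Lemma~\ref{schauder_div}, and Arzel\`a--Ascoli to pass to the limit. The paper organizes the inductive step slightly differently---it first records the differentiated equation (\ref{theorem3_diffeqn}) with the explicit inhomogeneities $f_\gamma^i$, $g_\gamma$ of (\ref{f_gamma})--(\ref{g_gamma}), then applies $\delta_{h,\eta}$ to that---but this is only a cosmetic difference from your version.
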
 

The proofs of Lemma \ref{smoothness_thm} and Theorem \ref{theorem3} require applying $D_y^{\gamma}$ to (\ref{theorem3_equation}).  Observe that 
\begin{align} \label{derivAB}
	D_y^{\gamma} (A^i(X,u_l,Du_l)) &= (D_{P_j} A^i)(X,u_l,Du_l) D_j D_y^{\gamma} u_l + F_{\gamma}^i(X,u_l,\{DD_y^{\beta} u_l\}_{|\beta| \leq |\gamma|-1}), 
		\nonumber \\
	D_y^{\gamma} (B(X,u_l,Du_l)) &= (D_{P_j} B)(X,u_l,Du_l) D_j D_y^{\gamma} u_l + G_{\gamma}(X,u_l,\{DD_y^{\beta} u_l\}_{|\beta| \leq |\gamma|-1}), 
\end{align}
on $B_1(0) \setminus [0,\infty) \times \{0\} \times B^{n-2}_1(0)$ for $l = 1,2,\ldots,q$ for some functions $F_{\gamma}^i$ and $G_{\gamma}$.  To simplify notation, let $a^{ij} = (a^{ij}_1,a^{ij}_2,\ldots,a^{ij}_q)$, $b^j = (b^j_1,b^j_2,\ldots,b^j_q)$, $f_{\gamma}^i = (f_{\gamma,1}^i,f_{\gamma,2}^i,\ldots,f_{\gamma,q}^i)$, and $g_{\gamma} = (g_{\gamma,1},g_{\gamma,2},\ldots,g_{\gamma,q})$ where 
\begin{align} \label{diffeqn_notation} 
	a_l^{ij} &= (D_{P_j} A^i)(X,u_l,Du_l), 
	&f_{\gamma,l}^i &= -F_{\gamma}^i(X,u_l,\{DD_y^{\beta} u_l\}_{|\beta| \leq |\gamma|-1}), \nonumber \\
	b_l^j &= (D_{P_j} B)(X,u_l,Du_l), 
	&g_{\gamma,l} &= -G_{\gamma}(X,u_l,\{DD_y^{\beta} u_l\}_{|\beta| \leq |\gamma|-1}), 
\end{align}
on $B_1(0) \setminus [0,\infty) \times \{0\} \times B^{n-2}_1(0)$.  We can express $f_{\gamma}^i$ as 
\begin{equation} \label{f_gamma}
	f_{\gamma,l}^i = \sum c_{\alpha, j, \beta} (D_{(y,Z,P)}^{\alpha} A^i)(X,u_l,Du_l) 
	\cdot \prod_{k \leq |\alpha_Z|} D_y^{\beta_{Z,k}} u_l \cdot \prod_{k \leq |\alpha_P|} D_y^{\beta_{P,k}} D_{j_k} u_l 
\end{equation}
on $B_1(0) \setminus [0,\infty) \times \{0\} \times B^{n-2}_1(0)$, where $\alpha = (\alpha_y,\alpha_Z,\alpha_P)$ and $D_{(y,Z,P)}^{\alpha} = D_y^{\alpha_y} D^{\alpha_Z} D^{\alpha_P}$ and the sum is taken over all nonzero multi-induces $\alpha$, $\beta_{Z,k}$, and $\beta_{P,l}$ and $1 \leq j_k \leq n$ such that 
\begin{equation} \label{sum_over}
	\alpha_y + \sum_{j \leq |\alpha_Z|} \beta_{Z,j} + \sum_{k \leq |\alpha_P|} \beta_{P,k} = \gamma
\end{equation} 
and $|\beta_{P,k}| < p$ and the coefficients $c_{\alpha,j,\beta}$ are positive integers depending on $\alpha$, $j_1,\ldots,j_{|\alpha|}$, and $\beta_1,\ldots,\beta_{|\alpha|}$.  Note that in (\ref{f_gamma}) and (\ref{sum_over}) assume the convention that sums over $j \leq 0$ equal zero and products over $j \leq 0$ equal one.  We can write a similarly express $g_{\gamma}$ as 
\begin{equation} \label{g_gamma}
	g_{\gamma} = \sum c_{\alpha, j, \beta} (D_{(y,Z,P)}^{\alpha} B)(X,u,Du) 
	\cdot \prod_{k \leq |\alpha_Z|} D_y^{\beta_{Z,k}} u \cdot \prod_{k \leq |\alpha_P|} D_y^{\beta_{P,k}} D_{j_k} u
\end{equation}
on $B_1(0) \setminus [0,\infty) \times \{0\} \times B^{n-2}_1(0)$, where the sum is taken over all nonzero multi-induces $\alpha$, $\beta_{Z,k}$, and $\beta_{P,l}$ and $1 \leq j_k \leq n$ such that (\ref{sum_over}) holds and $|\beta_{P,k}| < p$.  By (\ref{derivAB}), applying $D_y^{\gamma}$ to (\ref{theorem3_equation}) yields 
\begin{equation} \label{theorem3_diffeqn}
	D_i (a^{ij} D_j D_y^{\gamma} u_l) + b^j D_j D_y^{\gamma} u_l = D_i f_{\gamma,l}^i + g_{\gamma,l} 
	\text{ in } B_1(0) \setminus [0,\infty) \times \{0\} \times B^{n-2}_1(0). 
\end{equation}

\begin{proof}[Proof of Lemma \ref{smoothness_thm}]
Given $\eta \in \mathbb{R}^{n-2} \setminus \{0\}$, for each $h \neq 0$ let $\delta_{h,\eta}$ be the difference quotient defined by (\ref{diffquot}) and let $D_{(0,\eta)}$ denote the derivative in the direction $(0,\eta)$.  We will show that $D_{(0,\eta)} D^{\gamma} u \in C^{1,\mu;q}(B_1(0))$ for every $\eta \in \mathbb{R}^{n-2} \setminus \{0\}$ by induction on $|\gamma|$.  This follows by a standard difference quotient argument where we use the Schauder estimates Lemma \ref{schauder_div} to obtain uniform local $C^{1,\mu;q}$ bounds on $\delta_{h,\eta} D_y^{\gamma} u$ that are independent of $h$.  The key to the proof is the fact that such difference quotients $\delta_{h,\eta}$ of $u$ and its derivatives are well-defined.

First we show $D_{(0,\eta)} u \in C^{1,\mu;q}(B_1(0))$ for every $\eta \in \mathbb{R}^{n-2} \setminus \{0\}$.  Let $B_R(x_0,y_0) \subset \subset B_1(0)$ and suppose $|h \eta| \leq R/4$.  By applying $\delta_{h,\eta}$ to (\ref{theorem3_equation}) and using Schauder estimate Lemma \ref{schauder_div}, 
\begin{equation*}
	R \|D \delta_{h,\eta} u\|'_{C^{0,\mu;q}(B_{R/4}(x_0,y_0))} 
	\leq C |\eta| \left( \sup_{B_{R/2}(x_0,y_0)} |\delta_{h,\eta} u| + 1 \right) 
	\leq C |\eta| \left( \sup_{B_R(x_0,y_0)} |D_y u| + 1 \right)
\end{equation*} 
for some constant $C \in (0,\infty)$ depending on $n$, $q$, $\mu$, $A^i$, $B$, $\|u\|_{C^{1,\mu}(B_R(x_0,y_0))}$, and $R$ and independent of $h$.  So given any sequence of $h_j \rightarrow 0$, we can pass to a subsequence $\{\delta_{h_{j'},\eta} u\}$ that converges in $C^{1;q}$ on compact subsets of $B_1(0)$ with a limit in $C^{1,\mu;q}(B_1(0))$.  But $\delta_{h,\eta} u \rightarrow D_{(0,\eta)} u$ pointwise, so in fact $\delta_{h,\eta} u \rightarrow D_{(0,\eta)} u$ in $C^{1;q}$ on compact subsets of $B_1(0)$ as $h \rightarrow 0$ and $D_{(0,\eta)} u \in C^{1,\mu;q}(B_1(0))$. 

Now let $|\gamma| \geq 1$.  We will show that $D_{(0,\eta)} D_y^{\gamma} u \in C^{1,\mu}(B_1(0))$ assuming $D_y^{\beta} u \in C^{1,\mu}(B_1(0))$ whenever $|\beta| < |\gamma|$.  Let $B_R(x_0,y_0) \subset \subset B_1(0)$ and suppose $|h \eta| \leq R/4$.  Recall that applying $D_y^{\gamma}$ to (\ref{theorem3_equation}) yields (\ref{theorem3_diffeqn}).  By applying $\delta_{h,\eta}$ to (\ref{theorem3_diffeqn}) and using the Schauder estimates Lemma \ref{schauder_div}, if $B_R(x_0,y_0) \subset \subset B_1(0)$ and $|h \eta| \leq R/4$, 
\begin{align*}
	R \|D \delta_{h,\eta} D_y^{\gamma} u\|'_{C^{0,\mu;q}(B_{R/4}(x_0,y_0))} 
	&\leq C \left( \sup_{B_{R/2}(x_0,y_0)} |\delta_{h,\eta} D_y^{\gamma} u| + \|DD_y^{\gamma} u\|'_{C^{0,\mu;q}(B_{R/2}(x_0,y_0+h\eta))} \right. 
		\\& \hspace{5mm} \left. + R \|\delta_{h,\eta} f_{\gamma}\|'_{C^{0,\mu;q}(B_{R/2}(x_0,y_0))} 
		+ R^2 \sup_{B_{R/2}(x_0,y_0)} |\delta_{h,\eta} g_{\gamma}| \right) \\
	&\leq C |\eta| \left( \sup_{B_R(x_0,y_0)} |D_y D_y^{\gamma} u| + \|DD_y^{\gamma} u\|'_{C^{0,\mu;q}(B_R(x_0,y_0))} \right. 
		\\& \hspace{5mm} \left. + R \|D_y f_{\gamma}\|'_{C^{0,\mu;q}(B_R(x_0,y_0))} 
		+ R^2 \sup_{B_R(x_0,y_0)} |D_y g_{\gamma}| \right)
\end{align*} 
for some constant $C \in (0,\infty)$ depending on $n$, $q$, $\mu$, $A^i$, $B$, $\|u\|_{C^{1,\mu}(B_R(x_0,y_0))}$, and $R$ and independent of $h$.  So given any sequence of $h_j \rightarrow 0$, we can pass to a subsequence $\{\delta_{h_{j'},\eta} D_y^{\gamma} u\}$ that converges in $C^{1;q}$ on compact subsets of $B_1(0)$ with a limit in $C^{1,\mu;q}(B_1(0))$.  But $\delta_{h,\eta} D_y^{\gamma} u \rightarrow D_{(0,\eta)} D_y^{\gamma} u$ pointwise, so in fact $\delta_{h,\eta} D_y^{\gamma} u \rightarrow D_{(0,\eta)} D_y^{\gamma} u$ in $C^{1;q}$ on compact subsets of $B_1(0)$ as $h \rightarrow 0$ and $D_{(0,\eta)} D_y^{\gamma} u \in C^{1,\mu;q}(B_1(0))$.
\end{proof}

Recall from the beginning of this section that applying $D_y^{\gamma}$ to (\ref{theorem3_equation}) yields (\ref{theorem3_diffeqn}).  By the Schauder estimate Lemma \ref{schauder_div} applied to (\ref{theorem3_diffeqn}), 
\begin{align} \label{theorem3_schauder}
	\|D_y^{\gamma} u\|'_{C^{1,\mu;q}(B_{R/2p}(X_1))} \leq C \left( \sup_{B_{R/p}(X_1)} |D_y^{\gamma} u| 
	+ (R/p)^{1+\mu} [f_{\gamma}]_{\mu;q,B_{R/p}(X_1)} + (R/p)^2 \sup_{B_{R/p}(X_1)} |g_{\gamma}| \right) 
\end{align}
for all $B_R(X_1) \subset \subset B_1(0)$ and some constant $C \in (0,\infty)$ depending on $n$, $q$, $\mu$, $\lambda$, $\sup_{B_R(X_1) \times (-1,1) \times B^n_1(0)} |DA|$, and $\sup_{B_R(X_1) \times (-1,1) \times B^n_1(0)} |DB|$.  Since $f_{\gamma}^i$ and $g_{\gamma}$ can be expressed in terms of $u, Du, DD_y u,\ldots,$ $DD_y^{p-1} u$, we can prove (\ref{theorem3_conclusion}) by inductively computing bounds on $\|DD_y^{\gamma} u\|'_{C^{0,\mu;q}(B_{R/2p}(X_1))}$.  The difficulty is bounding $[f_{\gamma}]_{\mu;q,B_{R/p}(X_1)}$ and $\sup_{B_{R/p}(X_1)} |g_{\gamma}|$ in order to obtain the necessary estimates (\ref{theorem3_conclusion}) on $D_y^{\gamma} u$.  We accomplish this using a modified version of a technique used by Friedman in~\cite{Friedman}.  Since the estimate on $\sup_{B_{R/p}(X_1)} |g_{\gamma}|$ is easier to obtain, we will obtain that estimate first.

\begin{lemma} \label{bound_g_lemma}
Let $p \geq 5$ be a positive integer, $K_0,K,H_0 \geq 1$ be constants, and $B_R(X_1) \subset \subset B_1(0)$.  For some constants $C \in (0,\infty)$ and $H \geq 1$ depending on $n$, $K_0$, $K$, and $H_0$ and independent of $p$ the following holds true.  Suppose $u \in C^{1;q}(B_1(0))$ satisfies $\|u\|_{C^{1;q}(B_1(0))} \leq 1/2$ and (\ref{theorem3_equation}) for given smooth single-valued functions $A^i, B : B_1(0) \times (-1,1) \times B^n_1(0) \rightarrow \mathbb{R}$.  Suppose for any multi-index $\alpha = (\alpha_X,\alpha_Z,\alpha_P)$ with $|\alpha| = k$, where we let $D^{\alpha} = D_X^{\alpha_X} D_Z^{\alpha_Z} D_P^{\alpha_P}$, 
\begin{equation} \label{bound_g_B}
	|D_{(X,Z,P)}^{\alpha} B(X,Z,P)| \leq \left\{ \begin{array}{ll} 
		K_0 K^k R^{-1-|\alpha_X|-|\alpha_Z|} &\text{if } k = 1,2,3, \\
		(k-3)! K_0 K^k R^{-1-|\alpha_X|-|\alpha_Z|} &\text{if } 4 \leq k \leq p, 
	\end{array} \right.
\end{equation}
for $X \in B_R(X_1)$, $|Z| \leq 1/2$, and $|P| \leq 1/2$ and for any multi-index $\beta$ with $|\beta| = s < p$, 
\begin{equation} \label{bound_g_u}
	\frac{s}{R} \sup_{B_{R/p}(X_1)} |D_y^{\beta} u| + \sup_{B_{R/p}(X_1)} |DD_y^{\beta} u| \leq \left\{ \begin{array}{ll} 
		H_0 R^{-s} &\text{if } s = 0,1,2, \\
		(s-2)! H_0 H^{s-3} R^{-s} &\text{if } 3 \leq s < p. 
	\end{array} \right.
\end{equation}
Then $g_{\gamma}$ defined by (\ref{derivAB}) and (\ref{diffeqn_notation}) satisfies 
\begin{equation*}
	R \sup_{B_{R/p}(X_1)} |g_{\gamma}| \leq C (p-2)! H^{p-3} R^{-p}. 
\end{equation*}
\end{lemma}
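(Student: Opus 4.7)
The plan is to bound $g_\gamma$ term-by-term using the explicit formula (\ref{g_gamma}), applying hypotheses (\ref{bound_g_B}) and (\ref{bound_g_u}) to each factor, then sum over all admissible multi-index decompositions. The estimates have the factorial growth typical of real analytic functions, and the main task is to combine them sharply enough to land inside the target $(p-2)!\,H^{p-3}R^{-p}$.

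Fix a summand in (\ref{g_gamma}) indexed by $(\alpha,\{\beta_{Z,k}\},\{\beta_{P,k}\},\{j_k\})$ with $k=|\alpha|$, $m=|\alpha_Z|+|\alpha_P|$, $|\alpha_y|=k-m$, and $s_{Z,j}=|\beta_{Z,j}|\geq 1$, $s_{P,k}=|\beta_{P,k}|\geq 1$ satisfying $\sum_j s_{Z,j}+\sum_k s_{P,k}=p-(k-m)$. By (\ref{bound_g_B}) with $|\alpha_X|=|\alpha_y|=k-m$ I bound $|(D^\alpha B)(X,u,Du)|$ by $K_0 K^k(k-3)!\,R^{-1-(k-m)-|\alpha_Z|}$; by (\ref{bound_g_u}), extracting the $(s/R)|D_y^\beta u|$ term, I bound $|D_y^{\beta_{Z,j}} u|$ by $H_0 H^{s_{Z,j}-3}\frac{(s_{Z,j}-2)!}{s_{Z,j}}R^{1-s_{Z,j}}$, and from the $|DD_y^\beta u|$ term I bound $|D_y^{\beta_{P,k}}D_{j_k}u|$ by $H_0 H^{s_{P,k}-3}(s_{P,k}-2)!\,R^{-s_{P,k}}$, with obvious adjustments when $k\leq 3$ or some $s_i\leq 2$. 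Multiplying, the $R$-exponent totals exactly $-1-p$ because the $+1$ from each $Z$-factor cancels the $-|\alpha_Z|$ from $B$, so the target power $R^{-p}$ follows after multiplying by $R$. The remaining constants collect as $K_0 K^k H_0^m H^{p-k-2m}$, and the factorials as $(k-3)!\prod_j(s_{Z,j}-2)!\prod_k(s_{P,k}-2)!$; two applications of $\prod a_i!\leq(\sum a_i)!$ give $(k-3)!\,(p-k-m)!\leq(p-m-3)!\leq(p-2)!$.

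To finish I would sum over all admissible $(\alpha,\beta,j)$: for fixed sizes the Faa di Bruno coefficients $c_{\alpha,j,\beta}$ summed over ordered multi-index decompositions of $\gamma$ are bounded by a multinomial factor whose contribution, combined with the counts of size tuples, $j_k$-choices, and $(k,m)$-pairs, is at most an exponential $D^p$ in $p$ with $D=D(n)$. Choosing $H$ large enough in terms of $K_0,K,H_0,n$ so that $H/D$ dominates each of $K$ and $H_0^{1/2}$ turns the double sum over $0\leq m\leq k\leq p$ into a convergent geometric series bounded by a constant multiple of $H^{p-3}$, yielding $R\sup|g_\gamma|\leq C(p-2)!\,H^{p-3}R^{-p}$. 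The main obstacle is the combinatorial accounting: the Faa di Bruno sum must be controlled so that the ``number of terms'' factor is exponential (not factorial) in $p$, which is exactly the point where the freedom to enlarge $H$ depending on $K_0,K,H_0,n$ is used.
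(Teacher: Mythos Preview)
Your $R$-power bookkeeping and the per-term structure are correct, but the combinatorial summation step has a genuine gap. After you bound the factorial product $(k-3)!\prod_i(s_i-2)!$ by $(p-2)!$, you must sum the bare Fa\`a di Bruno coefficients $c_{\alpha,j,\beta}$, and these do \emph{not} total to an exponential $D^p$ with $D=D(n)$. Already in the single-variable model $D_y^p\,B(u(y))$, the coefficients attached to $B^{(m)}$ sum to the Stirling number $S(p,m)$; for $m=2$ this is $2^{p-1}-1$, while your remaining weight at $(k,m)=(2,2)$ is only $K_0K^2H_0^2H^{p-6}$, so that single block already contributes on the order of $2^{p-1}K_0K^2H_0^2H^{p-6}$, which cannot be bounded by $C\,H^{p-3}$ for any $C,H$ independent of $p$. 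The loss occurs precisely when you replace $\prod_i(s_i-2)!$ by $(p-2)!$: you discard the factor $\prod_i \frac{1}{s_i(s_i-1)}$ that would otherwise arise from pairing $(s_i-2)!$ with the $1/s_i!$ sitting inside the multinomial coefficient, and it is exactly this pairing that makes the sum over compositions converge.

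The paper sidesteps the combinatorial accounting entirely via Friedman's majorant device. One manufactures explicit single-variable power series $\Psi(\xi,\zeta)$ and $v(\xi)$ whose Taylor coefficients dominate, respectively, the derivative bounds (\ref{bound_g_B}) on $B$ and (\ref{bound_g_u}) on $u$, $Du$; since the Fa\`a di Bruno coefficients are universal and nonnegative, one gets $\sup|g_\gamma|\le D_\xi^p\,\Psi(\xi,(1+n)v(\xi))\big|_{\xi=0}$ at once, and the right-hand side is then estimated by direct power-series manipulation (the key step being an inductive bound on $(\xi+(1+n)v(\xi))^k$, see (\ref{v_power})). To salvage your direct approach you would have to keep each $(s_i-2)!$ glued to the $1/s_i!$ inside the Fa\`a di Bruno coefficient rather than bounding them separately---but carrying that through is tantamount to redoing the majorant computation by hand.
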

\begin{proof} 
Suppose we had a function $\Psi(y_1,\ldots,y_{n-2},Z,P_1,\ldots,P_n)$ such that $\Psi(0,0,0) = 0$ and 
\begin{align} \label{g_bound_Psi}
	|D_y^{\alpha_y} D_Z^{\alpha_Z} D_P^{\alpha_P} B(X,Z,P)| \leq D_y^{\alpha_y} D_{z_0}^{\alpha_Z} D_{(z_1,\ldots,z_n)}^{\alpha_P} \Psi(0,0,0) 
\end{align}
for $X \in B_R(X_1)$, $|Z| \leq \sup_{B_R(X_1)} |u|$, and $|P| \leq \sup_{B_R(X_1)} |Du|$ and for all nonnegative integers $\alpha_Z$ and multi-induces $\alpha_y$ and $\alpha_P$ such that $1 \leq |\alpha_y| + \alpha_Z + |\alpha_P| \leq p$.  Further suppose we had functions $v_j(y_1,\ldots,y_{n-2})$, $j = 0,1,\ldots,n$, such that $v_0(0) = 0$ and 
\begin{equation} \label{g_bound_v0} 
	\sup_{B_{R/p}(X_1)} |D_y^{\beta} u| \leq D_y^{\beta} v_0(0) 
\end{equation}
for $0 < |\beta| \leq p$ and for $j = 1,\ldots,n$, $v_j(0) = 0$, $D_y^{\gamma} v_j(0) \geq 0$, and 
\begin{equation} \label{g_bound_vj}
	\sup_{B_{R/p}(X_1)} |D_y^{\beta} D_j u| \leq D_y^{\beta} v_j(0) 
\end{equation}
for $0 < |\beta| < p$.  Recall that $g_{\gamma}$ can be expressed as in (\ref{g_gamma}); that is, 
\begin{equation*}
	g_{\gamma,l} = \sum c_{\alpha, j, \beta} (D_{(y,Z,P)}^{\alpha} B)(X,u_l,Du_l) 
	\cdot \prod_{k \leq |\alpha_Z|} D_y^{\beta_{Z,k}} u_l \cdot \prod_{k \leq |\alpha_P|} D_y^{\beta_{P,k}} D_{j_k} u_l
\end{equation*}
for $l = 1,2,\ldots,q$, where the sum is taken over $\alpha = (\alpha_y,\alpha_Z,\alpha_P)$, $\beta_{Z,k}$, $\beta_{P,l}$, and $1 \leq j_k \leq n$ such that (\ref{sum_over}) holds and $|\beta_{P,l}| < p$ and the $c_{\alpha_y, \alpha_Z, \alpha_P, j, \beta}$ are the positive integers from (\ref{f_gamma}).  We also have 
\begin{equation} \label{deriv_Psi_g}
	D_y^{\gamma} (\Psi(y,v)) = \sum c_{\alpha, j, \beta} (D_{(y,Z,P)}^{\alpha} \Psi)(y,v) 
	\cdot \prod_{k \leq |\alpha_Z|} D_y^{\beta_{Z,k}} v_0 \cdot \prod_{k \leq |\alpha_P|} D_y^{\beta_{P,k}} v_{j_l}
\end{equation}
where the sum is taken over (\ref{sum_over}) and the coefficients $c_{\alpha, j, \beta}$ are the same as above.  Here $v = (v_0,v_1,\ldots,v_n)$ and $\Psi(y,v) = \Psi(y_1,\ldots,y_{n-2},v_0,v_1,\ldots,v_{n-2})$.  Comparing (\ref{g_gamma}) and (\ref{deriv_Psi_g}) using (\ref{g_bound_Psi}), (\ref{g_bound_v0}), and (\ref{g_bound_vj}), 
\begin{equation} \label{g_bound_compare}
	\sup_{B_{R/p}(X_1)} |g_{\gamma}| \leq D_y^{\gamma} (\Psi(y,v(y))) |_{y=0}. 
\end{equation}

To construct $v_0,v_1,\ldots,v_n$ and $\Psi$, first we simplify the setup by letting $v_0(y) = R v(R^{-1} (y_1+\cdots+y_{n-2}))$ and $v_1(y) =\ldots= v_n(y) = v(R^{-1} (y_1+\cdots+y_{n-2}))$ for some function $v(\xi)$ and replacing $\Psi(y_1,\ldots,y_{n-2},Z,P_1,\ldots,P_n)$ with $R^{-1} \Psi(R^{-1} (y_1+\ldots+y_{n-2}), R^{-1} Z+P_1+\cdots+P_n)$ for some function $\Psi(\xi,\zeta)$.  We choose 
\begin{align*}
	&\Psi(\xi,\zeta) = \sum_{k=1}^2 \frac{1}{k!} K_0 K^k (\xi+\zeta)^k + \sum_{k=3}^p \frac{1}{k(k-1)(k-2)} K_0 K^k (\xi+\zeta)^k, \\
	&v(\xi) = H_0 \xi + \frac{1}{2} H_0 \xi^2 + \sum_{s=3}^p \frac{1}{s(s-1)} H_0 H^{s-3} \xi^s.
\end{align*}
It is easy to check (\ref{g_bound_Psi}), (\ref{g_bound_v0}), and (\ref{g_bound_vj}) using (\ref{bound_g_B}) and (\ref{bound_g_u}).

For functions $f(\xi)$ and $g(\xi)$, let $f \ll_p g$ denote that $|D_{\xi}^s f(0,0)| \leq D_{\xi}^s g(0,0)$ for $1 \leq s \leq p$.  We claim that 
\begin{equation} \label{v_power}
	(\xi + (1+n) v(\xi))^k \ll_p c^{k-1} (1+(1+n)H_0)^k \left( \xi^k + \xi^{k+1} + \sum_{s=k+2}^{p^k} \frac{1}{(s-k)^2} H^{s-k-2} \xi^s \right) 
\end{equation}
for $k = 1,2,\ldots,p$ for some constant $c \geq 1$ independent of $k$.  We can see this by induction on $k$.  (\ref{v_power}) obviously holds for $k = 1$.  Let $k \geq 2$.  Assuming (\ref{v_power}) holds with $k-1$ in place of $k$ and multiplying $(\xi + (1+n) v(\xi))$ by $(\xi + (1+n) v(\xi))^{k-1}$, 
\begin{align} \label{v_induction}
	(\xi + (1+n) v(\xi))^k \ll_p {}& 2c^{k-2} (1+(1+n)H_0)^k \left( \xi^k + \xi^{k+1} \right. \nonumber \\
	&\left. + \sum_{s=k+2}^{p^k} \sum_{j=2}^{s-k} \frac{1}{(j-1)^2 (s-j-k+1)^2} H^{s-k-2} \xi^s \right) .
\end{align}
Since
\begin{align} \label{thm5_trick}
	\sum_{j=2}^{s-k} \frac{1}{(j-1)^2 (s-j-k+1)^2} 
	&= \sum_{j=2}^{s-k} \frac{1}{(s-k)^2} \left( \frac{1}{j-1} + \frac{1}{s-j-k+1} \right)^2 \nonumber \\
	&\leq \sum_{j=2}^{s-k} \frac{1}{(s-k)^2} \frac{4}{(j-1)^2} 
	\leq \frac{2\pi^2}{3(s-k)^2}, 
\end{align}
(\ref{v_power}) follows from (\ref{v_induction}) provided we choose $c = 4\pi^2/3$.  Combining the definition of $\Psi$ and (\ref{v_power}), for $p \geq 5$, 
\begin{align*}
	&\Psi(\xi, (n+1) v(\xi)) \ll_p 
	\sum_{k=1}^2 \frac{1}{k!} c^{k-1} (1+(1+n)H_0)^k K_0 K^k \left( \xi^k + \xi^{k+1} + \sum_{s=k+2}^{p^k} \frac{1}{(s-k)^2} H^{s-k-2} \xi^s \right) 
	\\&+ \sum_{k=3}^p \frac{1}{(k-2)^3} c^{k-1} (1+(1+n)H_0)^k K_0 K^k 
		\left( \xi^k + \xi^{k+1} + \sum_{s=k+2}^{p^k} \frac{1}{(s-k)^2} H^{s-k-2} \xi^s \right) .
\end{align*}
It follows that 
\begin{align*}
	&\left. \frac{1}{p!} R \frac{\partial^p}{\partial \xi^p} \Psi(\xi, (n+1) v(\xi)) \right|_{\xi=0} \leq 
	\sum_{k=1}^2 \frac{1}{k! (p-k)^2} c^{k-1} (1+(1+n)H_0)^k K_0 K^k H^{p-k-2} 
	\\&+ \sum_{k=3}^{p-2} \frac{1}{(k-2)^3 (p-k)^2} c^{k-1} (1+(1+n)H_0)^k K_0 K^k H^{p-k-2} 
	\\&+ \frac{1}{(p-3)^3} c^{p-2} (1+(1+n)H_0)^{p-1} K_0 K^{p-1} + \frac{1}{(p-2)^3} c^{p-1} (1+(1+n)H_0)^p K_0 K^p 
\end{align*}
Let $H = \max\{c K (1+(1+n)H_0), 1\}$ so that, using a computation similar to (\ref{thm5_trick}), we have  
\begin{equation*}
	\left. \frac{1}{p!} \frac{\partial^p}{\partial \xi^p} \Psi(\xi, (n+1) v(\xi)) \right|_{\xi=0} 
	\leq \frac{C}{(p-2)^2} K_0 K^3 (1+(1+n)H_0)^3 H^{p-3} 
\end{equation*}
for some constant $C \in (0,\infty)$ independent of $p$.  Thus by (\ref{g_bound_compare}), 
\begin{equation*}
	R \sup_{B_{R/p}(X_1)} |g_{\gamma}| \leq C (p-2)! H^{p-3} R^{-p}
\end{equation*}
for some constant $C = C(n,K_0,K,H_0) \in (0,\infty)$ independent of $p$.
\end{proof}

\begin{lemma} \label{holder_f_lemma}
	Let $p \geq 5$ be a positive integer, $K_0,K,H_0 \geq 1$ be constants, and $B_R(X_1) \subset \subset B_1(0)$.  For some constants $C \in (0,\infty)$ and $H \geq 1$ depending on $n$, $K_0$, $K$, and $H_0$ and independent of $p$ the following holds true.  Suppose $u \in C^{1,\mu;q}(B_1(0))$, where $\mu \in (0,1/q)$, satisfies  $\|u\|_{C^{1;q}(B_1(0))} \leq 1/2$ and (\ref{theorem3_equation}) for given smooth single-valued functions $A^i, B : B_1(0) \times (-1,1) \times B^n_1(0) \rightarrow \mathbb{R}$.  Suppose for any multi-index $\alpha = (\alpha_X,\alpha_Z,\alpha_P)$ with $|\alpha| = k$, 
\begin{equation} \label{holder_f_A}
	|D_{(X,Z,P)}^{\alpha} A(X,Z,P)| \leq \left\{ \begin{array}{ll} 
		K_0 K^k R^{-|\alpha_X|-|\alpha_Z|} &\text{if } k = 1,2,3, \\
		(k-3)! K_0 K^k R^{-|\alpha_X|-|\alpha_Z|} &\text{if } 4 \leq k \leq p, 
	\end{array} \right.
\end{equation}
for $X \in B_R(X_1)$, $|Z| \leq 1/2$, and $|P| \leq 1/2$ and for any multi-index $\beta$ with $|\beta| = s < p$, 
\begin{equation} \label{holder_f_u}
	\frac{s}{R} \sup_{B_{R/p}(X_1)} \hspace{-3mm} |D_y^{\beta} u| + \sup_{B_{R/p}(X_1)} \hspace{-3mm} |DD_y^{\beta} u| 
	+ \left( \frac{R}{s} \right)^{\mu} \hspace{-1mm} [DD_y^{\beta} u]_{\mu;q,B_{R/p}(X_1)} \leq \left\{ \begin{array}{ll} 
		H_0 R^{-s} &\text{if } s = 0,1,2, \\
		(s-2)! H_0 H^{s-3} R^{-s} &\text{if } 3 \leq s < p. 
	\end{array} \right.
\end{equation}
Then $f_{\gamma}^i$ defined by (\ref{derivAB}) and (\ref{diffeqn_notation}) satisfies 
\begin{equation*}
	(R/p)^{\mu} [f_{\gamma}^i]_{\mu, B_{R/p}(X_1)} \leq C (p-2)! H^{p-3} R^{1-p}.
\end{equation*}
\end{lemma}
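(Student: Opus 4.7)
The plan parallels the proof of Lemma \ref{bound_g_lemma}: set up a majorant that dominates the combinatorial sum (\ref{f_gamma}) term by term, but now at the level of H\"older seminorms rather than sup norms. Two new ingredients appear compared to the sup-norm argument for $g_\gamma$, namely a H\"older product rule and a chain-rule estimate for the H\"older seminorm of $(D^\alpha_{(y,Z,P)} A^i)(X, u_l, Du_l)$.

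First I would apply the identity
\begin{equation*}
[f_1 \cdots f_N]_{\mu;B} \le \sum_{j=1}^N [f_j]_{\mu;B} \prod_{k \ne j} \sup_B |f_k|
\end{equation*}
to each summand of (\ref{f_gamma}), placing the H\"older seminorm on one factor at a time. For a factor $D_y^{\beta_{P,k_0}} D_{j_{k_0}} u_l$ with $|\beta_{P,k_0}| = s_0$, hypothesis (\ref{holder_f_u}) bounds its H\"older seminorm by $(s_0/R)^\mu$ times the corresponding sup-norm bound, so that multiplication by the prefactor $(R/p)^\mu$ converts this into $(s_0/p)^\mu \le 1$ times the sup-norm version. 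Similarly, on $B_{R/p}(X_1)$ the H\"older seminorm of a factor $D_y^{\beta_{Z,j}} u_l$ is dominated by $(2R/p)^{1-\mu} \sup|DD_y^{\beta_{Z,j}} u_l|$, again controlled by the sup-norm quantities up to bounded multiplicative constants. The upshot is that every ``one H\"older, rest sup'' contribution is majorized by the same combinatorial expression that appears in the sup-norm estimate for $f_\gamma$.

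The genuinely new piece is the H\"older seminorm of $h(X) := (D^\alpha A^i)(X, u_l(X), Du_l(X))$. A telescoping chain-rule splitting gives
\begin{equation*}
[h]_{\mu;B_{R/p}(X_1)} \le (2R/p)^{1-\mu} \sup|D_X D^\alpha A^i| + \sup|D_Z D^\alpha A^i|\, [u_l]_\mu + \sup|D_P D^\alpha A^i|\, [Du_l]_\mu,
\end{equation*}
where $[u_l]_\mu$ and $[Du_l]_\mu$ are controlled via $\|u\|_{C^{1;q}(B_1(0))} \le 1/2$ and (\ref{holder_f_u}). Hypothesis (\ref{holder_f_A}) then bounds the one-higher-order derivatives of $A^i$ at the cost of an extra factor of $K$ and an increment from $(k-3)!$ to $(k-2)!$, both absorbed by enlarging $K, K_0, H_0$ by absolute constants. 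Assembling these bounds inside the H\"older product rule yields a sum of the same shape as the one majorized in Lemma \ref{bound_g_lemma}, and I would construct $\tilde \Psi(\xi,\zeta)$ and $\tilde v(\xi)$ of the same form (with enlarged constants) and read off the estimate by evaluating $\frac{1}{p!} R\, \partial_\xi^p \tilde\Psi(\xi, (1+n)\tilde v(\xi))|_{\xi=0}$.

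The main obstacle is uniformity in $p$: the constant enlargements used to absorb the chain-rule contributions and the H\"older-to-sup interpolations must be independent of $p$, so that the majorant estimate of Lemma \ref{bound_g_lemma} carries over with only $p$-independent multiplicative changes. Once this is verified, the same Fa\`a di Bruno identity that equated the combinatorial sum for $g_\gamma$ with the $p$-th Taylor coefficient of $\Psi(\xi,(1+n)v(\xi))$ in the proof of Lemma \ref{bound_g_lemma} reproduces the desired bound $(R/p)^\mu [f_\gamma^i]_{\mu;B_{R/p}(X_1)} \le C(p-2)!H^{p-3}R^{1-p}$.
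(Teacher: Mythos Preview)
Your approach is correct in outline and takes a genuinely different route from the paper's. The paper introduces an auxiliary variable $\tau$ and builds a two-parameter majorant $\Psi(\xi,\tau,\zeta)$, $v(\xi,\tau)$ so that a single $\tau$-derivative formally encodes the H\"older seminorm; the product rule in $\tau$ then automatically reproduces the H\"older product rule (\ref{expression_f}) and the chain rule (\ref{expression2_f}), and one computes $\partial_\tau \partial_\xi^p$ of the composite in one stroke. You instead bound every ``one H\"older, rest sup'' contribution directly by a sup-type term and try to reduce to the one-variable majorant of Lemma~\ref{bound_g_lemma}. Both routes lead to the desired estimate.

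There is, however, a real imprecision in your write-up that you should fix. The increment from $(k-3)!$ to $(k-2)!$ in the chain-rule term for $(D^\alpha A^i)(X,u,Du)$ is a factor of $(k-2)$, not an absolute constant, and cannot be ``absorbed by enlarging $K, K_0, H_0$ by absolute constants'' as you claim. Likewise, the H\"older product rule applied to a summand of (\ref{f_gamma}) with $1+|\alpha_Z|+|\alpha_P|$ factors produces that many terms, a $k$-dependent multiplicity you do not mention. Together these contribute roughly one extra factor of $k$ per summand. This is harmless in the end---it converts the $\tfrac{1}{(k-2)^3}$ in the majorant to $\tfrac{1}{(k-2)^2}$, and $\sum_k \tfrac{1}{(k-2)^2 (p-k)^2}$ is still $O((p-2)^{-2})$ by the same trick (\ref{thm5_trick})---but it means your $\tilde\Psi$ cannot be ``of the same form with enlarged constants''; its coefficients must be $\tfrac{1}{k(k-1)}$ rather than $\tfrac{1}{k(k-1)(k-2)}$. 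The paper's $\tau$-parameter device is precisely a bookkeeping mechanism that tracks this factor automatically: the multipliers $(1+(k-2)K\tau)$ in its $\Psi$ and $(1+k\tau)$ in (\ref{v_power2}) are designed so that $\partial_\tau$ produces exactly this extra factor of $k$. Once you adjust the majorant accordingly, your direct reduction goes through.
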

\begin{proof} 
We will use a similar argument as for Lemma \ref{bound_g_lemma}, except now we need to compute a H\"{o}lder coefficient.  To this we will introduce an auxiliary parameter $t$ such that derivatives of $\Psi$ and $v$ with respect to $t$ bound to H\"{o}lder coefficients of expressions involving $A^i$ and $u$.  The basic idea is to use the fact that the sum, product, and chain rules for computing derivatives with respect to $t$ are similar to sum, product, and composition rules for computing H\"{o}lder coefficients. 

Suppose we had a function $\Psi(y_1,\ldots,y_{n-2},t,Z,P_1,\ldots,P_n)$ such that $\Psi(0,0,0,0) = 0$ and 
\begin{align} \label{holder_f_Psi}
	|D_y^{\alpha_y} D_Z^{\alpha_Z} D_P^{\alpha_P} A^i(X,Z,P)| &\leq D_y^{\alpha_y} D_{z_0}^{\alpha_Z} D_{(z_1,\ldots,z_n)}^{\alpha_P} \Psi(0,0,0,0), \nonumber \\
	(2R/p)^{1-\mu} |D_y D_y^{\alpha_y} D_Z^{\alpha_Z} D_P^{\alpha_P} A^i(X,Z,P)| &\leq D_t D_y^{\alpha_y} D_{z_0}^{\alpha_Z} D_{(z_1,\ldots,z_n)}^{\alpha_P} 
		\Psi(0,0,0,0), 
\end{align}
for $X \in B_R(X_1)$, $|Z| \leq \sup_{B_R(X_1)} |u|$, $|P| \leq \sup_{B_R(X_1)} |Du|$, and $1 \leq |\alpha_y| + \alpha_Z + |\alpha_P| \leq p$ and we had functions $v_j(y_1,\ldots,y_{n-2},t)$, $j = 0,1,\ldots,n$, such that $v_0(0,0) = 0$, 
\begin{align} \label{holder_f_v0}
	\hspace{-6mm}\sup_{B_{R/p}(X_1)} |D_y^{\beta} u| &\leq D_y^{\beta} v_0(0,0) \hspace{9.7mm} \text{for } 0 < |\beta| \leq p, \nonumber \\ 
	\hspace{-6mm}[D_y^{\beta} u]_{\mu;q,B_{R/p}(X_1)} &\leq D_t D_y^{\beta} v_0(0,0) \hspace{5mm} \text{for } 0 \leq |\beta| \leq p, 
\end{align}
and $v_j(0,0) = 0$, $D_y^{\gamma} v_j(0,0) \geq 0$,  
\begin{align} \label{holder_f_vj}
	\sup_{B_{R/p}(X_1)} |D_y^{\beta} D_j u| &\leq D_y^{\beta} v_j(0,0) \hspace{9.8mm} \text{for } 0 < |\beta| < p, \nonumber \\
	[D_y^{\beta} D_j u]_{\mu;q,B_{R/p}(X_1)} &\leq D_t D_y^{\beta} v_0(0,0) \hspace{5mm} \text{for } 0 \leq |\beta| < p, 
\end{align}
for $j = 1,\ldots,n$.  Recall that $f_{\gamma}^i$ can be expressed as in (\ref{f_gamma}) and compute that 
\begin{align} \label{expression_f}
	&[f_{\gamma}^i]_{\mu;q,B_{R/p}(X_1)} 
	\nonumber \\&\leq \sum c_{\alpha, j, \beta} \left( [(D_{(y,Z,P)}^{\alpha} A^i)(X,u,Du)]_{\mu;q,B_{R/p}(X_1)} 
		\prod_{k \leq |\alpha_Z|} \sup |D_y^{\beta_{Z,k}} u| \cdot \prod_{k \leq |\alpha_P|} \sup |D_y^{\beta_{P,k}} D_{j_k} u| 
	\right. \nonumber \\& + \sup |D_{(X,Z,P)}^{\alpha} A^i| \cdot \sum_{k \leq |\alpha_Z|} [D_y^{\beta_{Z,k}} u]_{\mu} 
		\prod_{l \neq k} \sup |D_y^{\beta_{Z,l}} u| \cdot \prod_{k \leq |\alpha_P|} \sup |D_y^{\beta_{P,k}} D_{j_k} u|
	\nonumber \\& \left. + \sup |D_{(X,Z,P)}^{\alpha} A^i| \cdot \prod_{k \leq |\alpha_Z|} \sup |D_y^{\beta_{Z,k}} u|
		\cdot \sum_{k \leq |\alpha_P|} [D_y^{\beta_{P,k}} D_{j_k} u]_{\mu} \prod_{l \neq k} \sup |D_y^{\beta_{P,l}} D_{j_l} u| \right) ,
\end{align}
where $(D_{(y,Z,P)}^{\alpha} A^i)(X,u,Du) = ((D_{(y,Z,P)}^{\alpha} A^i)(X,u_l,Du_l))_{l=1,2,\ldots,q}$, the supremums of derivatives of $A^i$ are taken over $X \in B_R(X_1)$, $|Z| \leq \sup_{B_R(X_1)} |u|$, and $|P| \leq \sup_{B_R(X_1)} |Du|$, and the supremums and the H\"{o}lder coefficients of derivatives of $u$ are taken over $B_{R/p}(X_1)$.  Moreover, 
\begin{align} \label{expression2_f}
	[(D_{(y,Z,P)}^{\alpha} A^i)(X,u,Du)]_{\mu;q,B_{R/p}(X_1)} 
	\leq & \, (2R/p)^{1-\mu} \sup_{B_{R/p}(X_1)} |(D_y D_{(y,Z,P)}^{\alpha} A^i)(X,u,Du)| \nonumber
	\\& + \sup_{B_{R/p}(X_1)} |(D_Z D_{(y,Z,P)}^{\alpha} A^i)(X,u,Du)| \, [u]_{\mu;q,B_{R/p}(X_1)}
	\\& + \sum_{k=1}^n \sup_{B_{R/p}(X_1)} |(D_{P_k} D_{(y,Z,P)}^{\alpha} A^i)(X,u,Du)| \, [D_k u]_{\mu;q,B_{R/p}(X_1)} \nonumber
\end{align}
We also have 
\begin{equation*} 
	D_y^{\gamma} (\Psi(y,t,v)) = \sum c_{\alpha, j, \beta} (D_{(y,Z,P)}^{\alpha} \Psi)(y,t,v) 
	\cdot \prod_{k \leq |\alpha_Z|} D_y^{\beta_{Z,1}} v_0 \cdot \prod_{k \leq |\alpha_P|} D_y^{\beta_{P,k}} v_{j_k}
\end{equation*}
where the sum is taken over (\ref{sum_over}) and the coefficients $c_{\alpha, j, \beta}$ are as in (\ref{f_gamma}).  Here $v = (v_0,v_1,\ldots,v_n)$ and $\Psi(y,t,v) = \Psi(y_1,\ldots,y_{n-2},t,v_0,v_1,\ldots,v_{n-2})$.  Thus 
\begin{align} \label{deriv_Psi_f}
	&D_t D_y^{\gamma} (\Psi(y,t,v(y,t))) 
	= \sum c_{\alpha, j, \beta} \left( D_t ((D_{(y,Z,P)}^{\alpha} \Psi)(y,t,v)) 
		\prod_{k \leq |\alpha_Z|} D_y^{\beta_{Z,k}} v_0 \cdot \prod_{k \leq |\alpha_P|} D_y^{\beta_{P,k}} v_{j_k} \right.
	\nonumber \\& + (D_{(y,Z,P)}^{\alpha} \Psi)(y,t,v) \cdot \sum_{k \leq |\alpha_Z|} D_t D_y^{\beta_{Z,k}} v_0 
		\prod_{l \neq k} D_y^{\beta_{Z,l}} v_0 \cdot \prod_{k \leq |\alpha_P|} D_y^{\beta_{P,k}} v_{j_k} 
	\nonumber \\& \left. + (D_{(y,Z,P)}^{\alpha} \Psi)(y,t,v) \cdot \prod_{k \leq |\alpha_Z|} D_y^{\beta_{Z,k}} v_0
		\cdot \sum_{k \leq |\alpha_P|} D_t D_y^{\beta_{P,k}} v_{j_k} \prod_{l \neq k} D_y^{\beta_{P,l}} v_{j_l} \right) 
\end{align}
where 
\begin{align} \label{expression2_Psi}
	D_t ((D_{(y,Z,P)}^{\alpha} \Psi)(y,t,v)) 
	\leq & \, (D_t D_{(y,Z,P)}^{\alpha} \Psi)(y,t,v) 
	+ (D_Z D_{(y,Z,P)}^{\alpha} \Psi)(y,t,v) D_t v_0 \nonumber
	\\& + \sum_{k=1}^n (D_{P_k} D_{(y,Z,P)}^{\alpha} \Psi)(y,t,v) D_t v_k .
\end{align}
Comparing (\ref{expression_f}) to (\ref{deriv_Psi_f}) and (\ref{expression2_f}) to (\ref{expression2_Psi}) using (\ref{holder_f_Psi}), (\ref{holder_f_v0}), and (\ref{holder_f_vj}), 
\begin{equation} \label{holder_f_compare}
	[f_{\gamma}^i]_{\mu;q,B_{R/p}(X_1)} \leq \left. D_t ((D_{(y,Z,P)}^{\alpha} \Psi)(y,t,v(t,y))) \right|_{y=0,t=0}. 
\end{equation}

To construct $v_0,v_1,\ldots,v_n$ and $\Psi$, first we simplify the setup by letting $v_0(y,t) = R v(R^{-1}(y_1+\cdots+y_{n-2}), (R/p)^{-\mu} t)$ and $v_1(y,t) = \ldots = v_n(y,t) = v(R^{-1}(y_1+\cdots+y_{n-2}), (R/p)^{-\mu} t)$ for some function $v(\xi,\tau)$ and replacing $\Psi(y_1,\ldots,y_{n-2},t,Z,P_1,\ldots,P_n)$ with $\Psi(R^{-1}(y_1+\cdots+y_{n-2}), (R/p)^{-\mu} t, R^{-1} Z+P_1+\cdots+P_n)$ for some function $\Psi(\xi,\tau,\zeta)$.  We choose 
\begin{align*}
	\Psi(\xi,\tau,\zeta) = & \, K_0 K \tau + \sum_{k=1}^2 \frac{1}{k!} (1 + K \tau) K_0 K^k (\xi+\zeta)^k 
		+ \sum_{k=3}^p \frac{1}{k(k-1)(k-2)} K_0 (1 + (k-2) K \tau) K^k (\xi+\zeta)^k \\
	v(\xi,\tau) = & \, (1 + \tau) \left( H_0 \xi + \frac{1}{2} H_0 \xi^2 + \sum_{s=3}^p \frac{1}{s(s-1)} H_0 H^{s-3} \xi^s \right) .
\end{align*}
It is easy to check (\ref{holder_f_Psi}), (\ref{holder_f_v0}), and (\ref{holder_f_vj}) using (\ref{holder_f_A}) and (\ref{holder_f_u}).  Note that to verify (\ref{holder_f_v0}) in the case $|\beta| = p$ we use (\ref{holder_f_u}) with $|\beta| = s = p-1$ and choose $H \geq 5/3$. 

For functions $f(\xi,\tau)$ and $g(\xi,\tau)$, let $f \ll_{p,1} g$ denote that $|D_{\xi}^s f(0,0)| \leq D_{\xi}^s g(0,0)$ and $|D_t D_{\xi}^s f(0,0)| \leq D_{\tau} D_{\xi}^s g(0,0)$ for $1 \leq s \leq p$.  By the (\ref{v_power}), for $k = 1,2,\ldots,p$, 
\begin{equation} \label{v_power2}
	(\xi + (1+n) v(\xi,\tau))^k \ll_{p,1} c^{k-1} \bar H_0^k (1 + k \tau) \left( \xi^k + \xi^{k+1} + \sum_{s=k+2}^{p^k} \frac{1}{(s-k)^2} H^{s-k-2} \xi^s \right) 
\end{equation}
where $c \geq 1$ is a constant independent of $k$ and $\bar H_0 = 1+(n+1)H_0$.  By the definition of $\Psi$ and (\ref{v_power2}), for $p \geq 5$,
\begin{align*}
	&\Psi(\xi,\tau,\zeta) \ll_{p,1} K_0 K \tau + \sum_{k=1}^2 \frac{1}{k!} c^{k-1} (1 + 2kK \tau) \bar H_0^k K_0 K^k 
		\left( \xi^k + \xi^{k+1} + \sum_{s=k+2}^{p^k} \frac{1}{(s-k)^2} H^{s-k-2} \xi^s \right) 
	\\& + \sum_{k=3}^p \frac{1}{k(k-1)(k-2)} c^{k-1} (1 + 2kK \tau) \bar H_0^k K_0 K^k 
		\left( \xi^k + \xi^{k+1} + \sum_{s=k+2}^{p^k} \frac{1}{(s-k)^2} H^{s-k-2} \xi^s \right) 
\end{align*}
It follows that 
\begin{align*}
	&\left. \frac{1}{p!} \frac{\partial^{p+1}}{\partial \tau \partial \xi^p} \Psi(\xi, (n+1) v(\xi)) \right|_{\xi=0, \tau=0} \leq 
	\sum_{k=1}^2 \frac{4}{k! (p-k)^2} c^{k-1} \bar H_0^k K_0 K^{k+1} H^{p-k-2} 
	\\& + \sum_{k=3}^{p-2} \frac{2}{(k-2)^2 (p-k)^2} c^{k-1} K_0 \bar H_0^k K^{k+1} H^{p-k-2} 
	+ \sum_{k=p-1}^p \frac{2}{(k-2)^2} c^{k-1} \bar H_0^k K_0 K^{k+1} 
\end{align*}
Suppose $H$ satisfies $c K \bar H_0 \leq H$ so that, using a computation similar to (\ref{thm5_trick}), we have  
\begin{equation*}
	\left. (R/p)^{\mu} \frac{1}{p!} \frac{\partial^{p+1}}{\partial \tau \partial \xi^p} \Psi(\xi, (n+1) v(\xi)) \right|_{\xi=0, \tau=0} 
	\leq \frac{C}{(p-3)^2} K_0 K^4 \bar H_0^3 H^{p-3} R^{-p}
\end{equation*}
for some constant $C \in (0,\infty)$ independent of $p$.  Thus by (\ref{holder_f_compare}), 
\begin{equation*}
	(R/p)^{\mu} [f_{\gamma}^i]_{\mu;q,B_{R/p}(X_1)} \leq C (p-2)! H_0^3 H^{p-3} R^{-p}
\end{equation*}
for some constant $C = C(n,K_0,K,H_0) \in (0,\infty)$ independent of $p$.
\end{proof}

\begin{lemma} \label{analyticity_lemma}
Let $\mu \in (0,1/q)$ and $u \in C^{1,\mu;q}(B_1(0))$ such that $\|u\|_{C^{1;q}(B_1(0))} \leq 1/2$ and $u$ is a solution to (\ref{theorem3_equation}) for some locally real analytic single-valued functions $A^i, B : B_1(0) \times (-1,1) \times B^n_1(0) \rightarrow \mathbb{R}$ such that (\ref{theorem3_ellipticity}) holds for some constant $\lambda > 0$.  Let $K_0, K, H_0 \geq 1$ and $B_{R_0}(X_0) \subset \subset B_1(0)$.  For some $H = H(n,q,\mu,\lambda,K_0,K,H_0) \geq 1$ the following holds.  Suppose that for every multi-index $\alpha = (\alpha_X,\alpha_Z,\alpha_P)$ with $|\alpha| = k$, 
\begin{equation*}
	|D_{(X,Z,P)}^{\alpha} A(X,Z,P)| + R_0 |D_{(X,Z,P)}^{\alpha} B| \leq \left\{ \begin{array}{ll} 
		K_0 K^k R_0^{-|\alpha_X|-|\alpha_Z|} &\text{if } k = 1,2,3, \\
		(k-3)! K_0 K^k R_0^{-|\alpha_X|-|\alpha_Z|} &\text{if } 4 \leq k \leq p, 
	\end{array} \right.
\end{equation*}
for $X \in B_{R_0}(X_0)$, $|Z| \leq 1/2$, and $|P| \leq 1/2$.  Further suppose that whenever $B_R(X_1) \subseteq B_{R_0}(X_0)$, for every multi-index $\beta$ with $|\beta| = s$, 
\begin{equation} \label{analyticity_lemma_eqn1}
	\frac{s}{R} \|D_y^{\beta} u\|'_{C^{1,\mu;q}(B_{R/2s}(X_1))} \leq \left\{ \begin{array}{ll} 
		H_0 R^{-s} &\text{if } s = 1,2, \\
		(s-2)! H_0 H^{s-2} R^{-s} &\text{if } 3 \leq s < p. 
	\end{array} \right.
\end{equation}
Then for every $B_R(X_1) \subseteq B_{R_0}(X_0)$ and multi-index $\gamma$ with $|\gamma| = p$, 
\begin{equation*}
	(R/p)^{-1} \|D_y^{\gamma} u\|'_{C^{1,\mu;q}(B_{R/2p}(X_1))} \leq (p-2)! H_0 H^{p-2} R^{-p}. 
\end{equation*}
\end{lemma}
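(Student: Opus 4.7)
My plan is to combine the Schauder estimate Lemma~\ref{schauder_div} applied to the differentiated equation~(\ref{theorem3_diffeqn}) with the pointwise and H\"older bounds on $g_{\gamma}$ and $f_{\gamma}^i$ supplied by Lemmas~\ref{bound_g_lemma} and~\ref{holder_f_lemma}, using the inductive hypothesis~(\ref{analyticity_lemma_eqn1}) to verify the hypotheses of those two lemmas.  First I would apply $D_y^{\gamma}$ to~(\ref{theorem3_equation}), producing~(\ref{theorem3_diffeqn}) with $a^{ij}_l = (D_{P_j}A^i)(X,u_l,Du_l)$, $b^j_l = (D_{P_j}B)(X,u_l,Du_l)$, and with $f_{\gamma}^i,g_{\gamma}$ expanded as in~(\ref{f_gamma}) and~(\ref{g_gamma}).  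The $k=1$ case of the hypothesis on $A^i,B$ together with $\|u\|_{C^{1;q}(B_1(0))} \leq 1/2$ gives the ellipticity and coefficient bounds needed to invoke Lemma~\ref{schauder_div} on the ball $B_{R/p}(X_1)$, producing the estimate~(\ref{theorem3_schauder}):
$$\|D_y^{\gamma} u\|'_{C^{1,\mu;q}(B_{R/(2p)}(X_1))} \leq C \Bigl(\sup_{B_{R/p}(X_1)} |D_y^{\gamma} u| + (R/p)^{1+\mu}[f_{\gamma}]_{\mu;q,B_{R/p}(X_1)} + (R/p)^2 \sup_{B_{R/p}(X_1)} |g_{\gamma}|\Bigr).$$

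Next I would estimate each of the three terms on the right.  For the sup term, I decompose $D_y^{\gamma} = D_{y_k} D_y^{\gamma'}$ with $|\gamma'|=p-1$ and, for each $X_2 \in B_{R/p}(X_1)$, I apply~(\ref{analyticity_lemma_eqn1}) with $s=p-1$ on the ball $B_{R'}(X_2)$, $R'=R(p-1)/p$, which lies inside $B_R(X_1) \subseteq B_{R_0}(X_0)$.  Since $R'/(2(p-1)) = R/(2p)$ and the $C^{1,\mu;q}$ prime norm controls $R'/(2(p-1))$ times the sup of the gradient, and since $(p/(p-1))^{p-1} \leq e$, this yields a uniform pointwise bound of the desired order on $|D_y^{\gamma} u|$ over $B_{R/p}(X_1)$.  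To verify the hypotheses~(\ref{bound_g_u})--(\ref{holder_f_u}) of Lemmas~\ref{bound_g_lemma} and~\ref{holder_f_lemma} for every $|\beta|=s<p$ on the same ball, I repeat this trick: at each $X_2 \in B_{R/p}(X_1)$ I apply the inductive bound at level $s$ on the ball $B_{R(p-1)/p}(X_2)$, using $R(p-1)/(2sp) \geq R/(2p)$ whenever $s \leq p-1$ and absorbing $(p/(p-1))^{p-1} \leq e$ into an absolute constant.  After replacing $H_0$ by $C_1 H_0 H$ for a suitable universal constant $C_1$, the hypotheses of the two auxiliary lemmas are satisfied.

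Feeding the outputs $R\sup_{B_{R/p}(X_1)}|g_{\gamma}| \leq C(p-2)! H^{p-3} R^{-p}$ and $(R/p)^{\mu}[f_{\gamma}^i]_{\mu} \leq C(p-2)! H^{p-3} R^{1-p}$, together with the sup bound derived above, into the Schauder inequality and multiplying through by $p/R$, I would obtain
$$\tfrac{p}{R}\,\|D_y^{\gamma} u\|'_{C^{1,\mu;q}(B_{R/(2p)}(X_1))} \leq C_{*}\bigl(p(p-3)!\, H_0 H^{p-3} R^{-p} + (p-2)!\, H^{p-3} R^{-p}\bigr),$$
which I want to dominate by $(p-2)! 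H_0 H^{p-2} R^{-p}$.  The hard part will be choosing $H = H(n,q,\mu,\lambda,K_0,K,H_0)$ large enough that, after the Schauder constant, the factor $e$ coming from $(p/(p-1))^{p-1}$, and the $H$--value produced by Lemmas~\ref{bound_g_lemma} and~\ref{holder_f_lemma} are all tracked, the total constant multiplying $H^{p-3}$ on the right is absorbed into a single extra power of $H$.  Once $H$ is fixed in this way, the inequality closes uniformly in $p$ and the induction is complete.
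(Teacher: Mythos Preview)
Your proposal is correct and follows essentially the same route as the paper's proof: apply the Schauder estimate~(\ref{theorem3_schauder}) to the differentiated equation~(\ref{theorem3_diffeqn}), control $\sup|D_y^{\gamma}u|$ via the inductive hypothesis at level $p-1$ on the shifted ball $B_{(p-1)R/p}(X_2)$ (absorbing $(p/(p-1))^{p-1}\le e$), verify the hypotheses of Lemmas~\ref{bound_g_lemma} and~\ref{holder_f_lemma} by the same shifting trick, and then choose $H$ large enough to close the induction.  The paper carries out exactly this, including the case split on $|X-X'|$ versus $(p-1)R/(ps)$ needed to pass from the local $C^{1,\mu;q}$ bounds on $B_{(p-1)R/(2ps)}(X_2)$ to a genuine H\"older bound on all of $B_{R/p}(X_1)$, which you should make explicit when you write it out.

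One bookkeeping point: you propose to feed $\tilde H_0=C_1H_0H$ into Lemmas~\ref{bound_g_lemma} and~\ref{holder_f_lemma} to reconcile the $H^{s-2}$ in~(\ref{analyticity_lemma_eqn1}) with the $H^{s-3}$ in~(\ref{bound_g_u}) and~(\ref{holder_f_u}).  Be careful here, since the lower bound those lemmas impose on $H$ is of the form $H\ge cK(1+(n+1)\tilde H_0)$, and substituting $\tilde H_0=C_1H_0H$ makes this circular.  The clean way around it is to observe that the proofs of Lemmas~\ref{bound_g_lemma} and~\ref{holder_f_lemma} go through verbatim with $H^{s-2}$ in the hypothesis and $H^{p-2}$ in the conclusion (just shift the exponent in the majorant $v$), so that the same $H$ is used throughout and the final inequality $C_*(p(p-3)!+{(p-2)!})H_0H^{p-3}\le (p-2)!H_0H^{p-2}$ closes for $H\ge C_*(n,q,\mu,\lambda,K_0,K,H_0)$ with no circularity.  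The paper's write-up is terse on exactly this point, but the mechanism is the one you outlined.
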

\begin{proof} 
First we will apply Lemma \ref{bound_g_lemma} and Lemma \ref{holder_f_lemma} in order to bound the terms $[f_{\gamma}]_{\mu;q,B_{R/p}(X_1)}$ and $\sup_{B_{R/p}(X_1)} |g_{\gamma}|$ in (\ref{theorem3_schauder}).  We need to check (\ref{bound_g_u}) and (\ref{holder_f_u}).  For every $X \in B_{R/p}(X_1) \setminus [0,\infty) \times \{0\} \times \mathbb{R}^{n-2}$, $B_{(p-1)R/p}(X) \subseteq B_{R_0}(X_0)$, so by (\ref{analyticity_lemma_eqn1})
\begin{align} \label{analyticity_lemma_eqn2}
	&(R/s)^{-1} |D_y^{\beta} u(X)| + |DD_y^{\beta} u(X)| + (R/s)^{\mu} [DD_y^{\beta} u]_{\mu;q,B_{(p-1)R/2ps}(X)} 
	\\&\leq 2^{\mu} \|D_y^{\beta} u\|'_{C^{1,\mu;q}(B_{(p-1)R/2ps}(X))} 
	\leq 2^{\mu} (s-2)! H_0 H^{s-2} \left( \frac{p}{(p-1)R} \right)^s
	\leq 2^{\mu} e (s-2)! H_0 H^{s-2} R^{-s}. \nonumber 
\end{align}
Now let $X = (x_1,x_2,y), X' = (x'_1,x'_2,y') \in B_{R/p}(X_1)$ with $x_2 > 0$ and $x'_2 > 0$.  If $|X - X'| < (p-1)R/ps$, then $\hat X = (X+X')/2 \in B_{R/p}(X_1)$ and $X, X' \in B_{(p-1)R/p}(\hat X) \subset B_{R_0}(X_0)$, so by (\ref{analyticity_lemma_eqn2}) 
\begin{equation} \label{analyticity_lemma_eqn3}
	\left( \frac{R}{s} \right)^{\mu} \frac{|DD_y^s u_l(X) - DD_y^s u_l(X')|}{|X - X'|^{\mu}} 
	\leq \left( \frac{R}{s} \right)^{\mu} [DD_y^s u]_{\mu, B_{(p-1)R/2ps}(\hat X)} 
	\leq (s-2)! H_0 \frac{2^{\mu} e}{R^{s+\mu}} 
\end{equation}
for $l = 1,2,\ldots,q$.  If $|X - X'| \geq (p-1)R/ps$, then by using (\ref{analyticity_lemma_eqn2}) to bound $|DD_y^s u_l(X)|$ and $|DD_y^s u_l(Y)|$,
\begin{equation} \label{analyticity_lemma_eqn4}
	\left( \frac{R}{s} \right)^{\mu} \frac{|DD_y^s u_l(X) - DD_y^s u_l(X')|}{|X - X'|^{\mu}} 
	\leq 2^{1+2\mu} e (s-2)! H_0 R^{-s-\mu} 
\end{equation}
for $l = 1,2,\ldots,q$.  By the same computations, (\ref{analyticity_lemma_eqn3}) and (\ref{analyticity_lemma_eqn4}) also hold if instead $x_2 < 0$ and $x'_2 < 0$.  By (\ref{analyticity_lemma_eqn2}), (\ref{analyticity_lemma_eqn3}), and (\ref{analyticity_lemma_eqn4}), (\ref{bound_g_u}) and (\ref{holder_f_u}) hold with $2^{2+2\mu} H_0$ in place of $H_0$ when $s \geq 2$.  By a similar argument (\ref{bound_g_u}) and (\ref{holder_f_u}) hold with $2^{2+2\mu} H_0$ in place of $H_0$ when $s = 1,2$.  

Now by Lemma \ref{bound_g_lemma} and Lemma \ref{holder_f_lemma}, 
\begin{equation} \label{analyticity_lemma_eqn5}
	(R/p)^{\mu} [f_{\gamma}^i]_{\mu;q,B_{R/p}(X_1)} + (R/p) \sup_{B_{R/p}(X_1)} |g_{\gamma}| \leq C (p-2)! H_0^3 H^{p-3} R^{-p}.
\end{equation}
for some constant $C = C(n,q,\mu,K_0,K) \in (0,\infty)$.  Thus by (\ref{theorem3_schauder}) and (\ref{analyticity_lemma_eqn5}), 
\begin{eqnarray*}
	(R/p)^{-1} \|D_y^{\gamma} u\|'_{C^{1,\mu;q}(B_{R/2p}(X_1))}
	&\leq& C \left( p(p-3)! H_0 H^{p-3} R^{-p} + (p-2)! H^{p-3} R^{-p}  \right) \\
	&\leq& C (p-2)! H_0 H^{p-2} R^{-p} \left( \frac{p}{(p-2) H} + \frac{1}{H_0 H} \right) \\
	&\leq& (p-2)! H_0 H^{p-2} R^{-p}
\end{eqnarray*} 
where $C \in (0,\infty)$ denotes constants depending on $n$, $q$, $\mu$, $\lambda$, $K_0$, $K$, and $H_0$ and independent of $p$ and $H$ is large enough that Lemmas \ref{bound_g_lemma} and \ref{holder_f_lemma} hold and $H \geq \max\{4C, 2C/H_0\}$. 
\end{proof}

To complete the proof of Theorem \ref{theorem3}, let $B_R(x_0,y_0) \subset \subset B_1(0)$.  Since $A^i$ and $B$ are real analytic and by the proof of Lemma \ref{smoothness_thm}, there are constants $K, K_0, H_0 \geq 1$ such that for any multi-index $\alpha = (\alpha_X,\alpha_Z,\alpha_P)$ with $|\alpha| = k$ 
\begin{align*}
	|D_{(X,Z,P)}^{\alpha} A^i| + (R/2) |D_{(X,Z,P)}^{\alpha} B| &\leq K_0 K^k (R/2)^{-|\alpha_X|-|\alpha_Z|} &&\text{for } k = 2,3, \\
	|D_{(X,Z,P)}^{\alpha} A^i| + (R/2) |D_{(X,Z,P)}^{\alpha} B| &\leq (k-3)! K_0 K^k (R/2)^{-|\alpha_X|-|\alpha_Z|} &&\text{for } k \geq 4, 
\end{align*}
for $X \in B_R(x_0,y_0)$, $|Z| \leq 1/2$, $|P| \leq 1/2$ and for any multi-index $\beta$ with $|\beta| = s$
\begin{equation*}
	\frac{2s}{R} \|D_y^{\beta} u\|'_{C^{1,\mu;q}(B_{R/4s}(x,y))} \leq H_0 (R/2)^{-s} 
\end{equation*}
whenever $(x,y) \in B_{R/2}(x_0,y_0)$ and $s = 1,2,3,4$.  By the Lemma \ref{analyticity_lemma} and induction, for some $H$ sufficiently large depending on $n$, $q$, $K_0$, $K$, and $H_0$, 
\begin{equation*}
	\frac{2s}{R} \|D_y^{\beta} u\|'_{C^{1,\mu;q}(B_{R/4s}(x,y))} \leq (s-2)! H_0 H^{s-2} (R/2)^{-s} 
\end{equation*}
whenever $(x,y) \in B_{R/2}(x_0,y_0)$ and $s \geq 5$ and in particular (\ref{theorem3_conclusion}) holds true with $C = H_0 H$.

\section{Acknowledgment}

The work in this paper was carried out as part of my thesis at Stanford University.  I would like to thank my advisor Leon Simon for his guidance and encouragement.

\end{document}